\documentclass[10pt,leqno]{article}

\usepackage[lite]{amsrefs}
 \usepackage{hyperref}

\usepackage{latexsym,enumerate,pstricks}
\usepackage[notref, notcite]{}

\usepackage{amssymb, amsfonts, eucal,amsmath,amsthm,tabularx}
\usepackage{longtable}

\setlength{\oddsidemargin}{.3in}
\setlength{\evensidemargin}{\oddsidemargin}
\setlength{\topmargin}{-.5in} \setlength{\headheight}{0.0in}
\setlength{\headsep}{0.0in} \setlength{\textwidth}{6.2in}
\setlength{\textheight}{9.6in} \setlength{\footskip}{.39in}

\newtheorem{theorem}{Theorem}[section]
\newtheorem{lemma}[theorem]{Lemma}

 \newtheorem{corollary}[theorem]{Corollary}
\newtheorem{remark}[theorem]{Remark}

\newcommand{\thm}[1]{Theorem~\ref{#1}}
\newcommand{\lem}[1]{Lemma~\ref{#1}}

\newcommand{\z}{{\mathbf{z}}}
\newcommand{\SSigma}{\mathcal{S}}

\newcommand{\Spl}[2]{\SSigma_{#1}(#2)}

\newcommand{\scale}{\vartheta}

 \DeclareMathOperator{\Lip}{Lip}
 \DeclareMathOperator{\dist}{dist}
 \DeclareMathOperator{\esssup}{ess\, sup}

\newcommand{\sect}[1]{\section{#1}  \setcounter{equation}{0}  }

\newcommand{\norm}[2]{\left\|#1\right\|_{#2}}
\newcommand{\ds}{\displaystyle}

\newcommand{\Poly}{\Pi}
\newcommand{\Pn}{\Poly_n}

\newcommand{\E}{{\mathcal E}}

\newcommand{\A}{{\mathcal A}}
\newcommand{\B}{\mathcal B}
\newcommand{\NN}{{\mathcal N}}
\newcommand{\N}{\mathbb N}
\newcommand{\R}{\mathbb R}

\newcommand{\andd}{\quad\mbox{\rm and}\quad}
\newcommand{\ttau}{\widetilde\tau}
\newcommand{\ntau}{\mathcal{T}}
\newcommand{\qq}{\mathcal{Q}}

\newcommand{\pp}{\mathcal{F}}
\newcommand{\tpp}{\widetilde\pp}
\newcommand{\opp}{\overline\pp}

\newcommand{\J}{{\mathcal{J}}}
\newcommand\w{{\omega}}

\newcommand{\aj}{\alpha_j}

\newcommand{\mon}{\Delta^{(1)}}
\newcommand{\con}{\Delta^{(2)}}
\newcommand{\qmon}{\Delta^{(q)}}

\def\be  {\begin{equation}}
\def\ee  {\end{equation}}

\newcommand{\wS}{\widetilde S}

\newcommand{\ineq}[1]{{\rm(\ref{#1})}}

\newcommand{\ie}{{\em i.e., }}
\newcommand{\eg}{{\em e.g.}}
\newcommand{\st}{\;\; \big| \;\;}

 \newcommand{\ec}{\end{comment}}
\newcommand{\bc}{ \begin{comment}
 }

\newenvironment{comment}[2]
{\bgroup\vspace{7pt}
\begin{tabular}{|p{5in}|}
\hline \qquad \bf \footnotesize Comment -- to be deleted in the final version \\
\hline
\quad\sl\footnotesize #1#2} {\\ \hline \end{tabular}
\vspace{7pt}\indent\egroup}

\title{{\sc Interpolatory pointwise estimates for convex polynomial approximation}\thanks{{\it AMS classification:} 41A29, 41A10, 41A25   {\it Keywords
and phrases:} Convex approximation by polynomials, Degree of approximation, Jackson-type interpolatory estimates.}}

\author{K. A.  Kopotun\thanks{Department of Mathematics, University of
Manitoba, Winnipeg, Manitoba, R3T 2N2, Canada ({\tt kirill.kopotun@umanitoba.ca}).  Supported by NSERC of Canada Discovery Grant RGPIN 04215-15.} \and
D. Leviatan\thanks{Raymond and Beverly Sackler School of Mathematical
Sciences, Tel Aviv University, Tel Aviv 6139001, Israel ({\tt  leviatan@tauex.tau.ac.il}).}
\and
I. L. Petrova\thanks
{Faculty of Mechanics and Mathematics, Taras
Shevchenko National University of Kyiv, 01601 Kyiv, Ukraine.   ({\tt irynapetrova1411@gmail.com}).}
\and
I. A. Shevchuk\thanks
{Faculty of Mechanics and Mathematics, Taras
Shevchenko National University of Kyiv, 01601 Kyiv, Ukraine ({\tt shevchuk@univ.kiev.ua}).}
}

\begin{document}

\maketitle

\begin{abstract}
This paper deals with approximation of smooth convex functions $f$ on an interval by convex algebraic polynomials which interpolate $f$ and its derivatives at the endpoints of this interval. We call such estimates ``interpolatory''.
One important corollary of our main theorem is the following result on approximation of $f\in \Delta^{(2)}$, the set of  convex functions,  from
 $W^r$, the space of   functions on $[-1,1]$ for which $f^{(r-1)}$ is  absolutely continuous   and
$\|f^{(r)}\|_{\infty} := \esssup_{x\in[-1,1]} |f^{(r)}(x)|  < \infty$:
\begin{quote}
For any   $f\in W^r \cap\con$, $r\in\N$, there exists a number
$\NN=\NN(f,r)$,   such that for every $n\ge \NN$, there is an algebraic polynomial of degree $\le n$ which is in $\con$ and such that
\[
  \norm{ \frac{f-P_n}{\varphi^r} }{\infty} \leq \frac{c(r)}{n^r} \norm{f^{(r)}}{\infty} ,
\]
where $\varphi(x):= \sqrt{1-x^2}$.
\end{quote}
For $r=1$ and $r=2$, the above result holds with $\NN=1$ and is well known. For $r\ge 3$, it is not true, in general, with $\NN$ independent of $f$.
\end{abstract}

%

\sect{Introduction and main results}

We start by recalling some standard notation. As usual, $C^r(I)$ denotes the space of $r$ times continuously differentiable functions on a closed interval $I$,  $C^0(I):=C(I)$ is the space of continuous functions on $I$, equipped with the uniform norm which   will be denoted by $\norm{\cdot}{I}$. For $k\in\N$ and an interval $I$,
$\Delta^k_u(f,x;I):= \sum_{i=0}^k(-1)^i  \binom ki    f(x+(k/2-i)u)$ if $x\pm ku/2\in I$ and $:=0$, otherwise, and
$
\w_k(f,t;I):=\sup_{0<u\le t}\|\Delta^k_u(f,\cdot;I)\|_{I}
$
is the $k$th modulus of smoothness of $f$ on $I$.
When dealing with $I=[-1,1]$, we suppress referring to the interval and  use the notation  $\|\cdot\|:=\|\cdot\|_{[-1,1]}$,  $\omega_k(f,t):=\omega_k(f,t;[-1,1])$, $C^r := C^r[-1,1]$,  etc.
We denote by $\qmon$ the class of all $q$-monotone functions on $[-1,1]$, \ie continuous functions such that $\Delta^q_u(f,x) \ge 0$ for all $x\in[-1,1]$ and $u>0$. In particular,
  $\mon$ and $\con$ are the classes of all monotone and convex functions on $[-1,1]$, respectively.
Also,
\be\label{varphi}
\varphi(x):=\sqrt{1-x^2}\quad\text{and}\quad\rho_n(x):= \varphi(x)n^{-1}+ n^{-2}, \; n\in\N,
\ee
$\rho_0(x)\equiv 1$, and  $\Pn$ denotes  the space of algebraic polynomials of degree $\le n$.

The following classical Timan-Dzyadyk-Freud-Brudnyi direct theorem for the approximation by algebraic polynomials (see \eg{} \cite{DL}*{Theorem 8.5.3}) shows that
 the order of approximation becomes significantly better near the endpoints of $[-1,1]$:
if $k\in\N$, $r\in\N_0$ and $f\in C^r$, then for each $n\geq k+r-1$ there is a polynomial $P_n\in\Pn$ satisfying
\be \label{classdir}
|f(x)-P_n(x)| \le c(k,r) \rho_n^r(x) \w_k(f^{(r)}, \rho_n(x)) , \quad x\in [-1,1] .
\ee

Clearly, if we require that the approximating polynomials interpolate $f$ as well as its derivatives at the endpoints, and we are successful, then the estimates should become even better.

 Indeed, the following Telyakovskii-Gopengauz-type
(\ie ``interpolatory''-type) theorem is an immediate consequence of \cite{K}*{Corollary 2-3.4} (see \eg{}  \cite{K} for the history of this problem).

\begin{theorem}[\mbox{see \cite{K}*{Corollary 2-3.4}}] \label{thk-sim}
Let $r\in\N_0$, $k\in\N$ and $f\in  C^r$. Then for any $n\ge \max\{k+r-1, 2r+1\}$, there is a polynomial $P_n \in \Pn$ such that
\ineq{classdir} is valid and, moreover,
\be \label{sim2}
|f(x)-P_n(x)| \le c(r,k) \varphi^{2r}(x)  \w_k(f^{(r)}, \varphi^{2/k}(x) n^{-2(k-1)/k} ) , \quad \text{if }\; 1-n^{-2} \le |x| \le 1.
\ee
\end{theorem}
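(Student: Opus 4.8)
The plan is to split off the one genuinely new ingredient and obtain everything else by soft arguments. The global bound \ineq{classdir} demanded of $P_n$ is nothing but the Timan--Dzyadyk--Freud--Brudnyi theorem recalled just above, so the only new content is the endpoint refinement \ineq{sim2}; by symmetry it suffices to treat the right-hand zone $[1-n^{-2},1]$, the left one being identical. On that zone the weights simplify: from $\varphi^2(x)=(1-x)(1+x)\le 2(1-x)\le 2n^{-2}$ we get $\varphi(x)\le\sqrt2\,n^{-1}$, whence $\rho_n(x)\asymp n^{-2}$ and, crucially, the modulus argument rewrites as $\varphi^{2/k}(x)\,n^{-2(k-1)/k}=n^{-2}\bigl(\varphi(x)n\bigr)^{2/k}\le 2n^{-2}$. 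Thus \ineq{sim2} is a genuine sharpening of \ineq{classdir}: it trades the harmless factor $\rho_n^r(x)\asymp n^{-2r}$ for $\varphi^{2r}(x)\asymp(1-x)^r$, which vanishes at the endpoint and so encodes the interpolation.

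First I would settle the base case $r=0$: for $g\in C$ produce $p\in\Pn$ satisfying both \ineq{classdir} (with $r=0$) and, on the endpoint zones,
\[
|g(x)-p(x)|\le c(k)\,\w_k\!\bigl(g,\varphi^{2/k}(x)\,n^{-2(k-1)/k}\bigr).
\]
This is the raw Telyakovskii--Gopengauz estimate and is the technical heart of the matter: it is proved via Dzyadyk-type polynomial kernels of order $k$, and the exact exponents $2/k$ and $2(k-1)/k$ are precisely those forced by the parabolic scaling $\varphi(x)\sim n^{-1}$ of such kernels near $\pm1$. This is exactly what \cite{K}*{Corollary 2-3.4} supplies, and I would invoke it here.

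Next I would pass from $r=0$ to general $r$ by a single $r$-fold integration. Apply the base case to $g:=f^{(r)}$ with degree budget $n-r$, obtaining $p\in\Poly_{n-r}$, and define $P_n$ by integrating $p$ exactly $r$ times, choosing the constants so that $f-P_n$ and its first $r-1$ derivatives vanish at $-1$ and imposing on $p$ the $r$ moment conditions $\int_{-1}^{1}(1-t)^{j}\bigl(f^{(r)}(t)-p(t)\bigr)\,dt=0$, $0\le j\le r-1$ (the $r$ free parameters of $p$ permit this, and a standard argument shows the base-case estimate survives the correction). These force $f-P_n$ to vanish to order $r$ at both endpoints, so $\deg P_n\le n$ and, for $x$ in the right-hand zone,
\[
|f(x)-P_n(x)|\le\frac{1}{(r-1)!}\int_{x}^{1}(t-x)^{r-1}\,\bigl|f^{(r)}(t)-p(t)\bigr|\,dt .
\]
Bounding the integrand by the base-case estimate, using that $\varphi$ is decreasing on $[1-n^{-2},1]$ to replace $\varphi(t)$ by $\varphi(x)$ for $t\in[x,1]$ inside the (nondecreasing) modulus, and computing $\int_x^1(t-x)^{r-1}dt=(1-x)^r/r$, one extracts exactly the factor $(1-x)^r\asymp\varphi^{2r}(x)$ times $\w_k\bigl(f^{(r)},\varphi^{2/k}(x)n^{-2(k-1)/k}\bigr)$, i.e.\ \ineq{sim2}. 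The global estimate \ineq{classdir} for $P_n$ descends from the one for $p$ by the standard weighted bound for $\int|x-t|^{r-1}\rho_n$-weighted moduli.

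I expect the main obstacle to be the base case $r=0$ itself: building kernels that interpolate at $\pm1$ without degrading the interior Timan rate, the delicate point being the matching of the two regimes across the transition band $\varphi(x)\asymp n^{-1}$, together with the degree bookkeeping that makes the threshold $n\ge\max\{k+r-1,2r+1\}$ come out right (the $r$-fold integration and the $r$ endpoint conditions account for the $2r+1$). Once that base case is granted, as it is by \cite{K}*{Corollary 2-3.4}, the passage to general $r$ above is routine.
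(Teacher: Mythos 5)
The paper offers no proof of \thm{thk-sim} at all: the entire statement, for every $r\in\N_0$ and $k\in\N$, is quoted as an immediate consequence of \cite{K}*{Corollary 2-3.4}. You invoke that corollary only in the base case $r=0$ and then try to reach general $r$ by $r$-fold integration of an approximant $p$ of $f^{(r)}$. That reduction contains a genuine, structural gap, not a bookkeeping one.

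The false step is your closing claim that ``the global estimate \ineq{classdir} for $P_n$ descends from the one for $p$ by the standard weighted bound.'' No such bound exists. Since your $P_n$ is an $r$-fold antiderivative of $p$ matching $f$ at $-1$, away from the endpoints all you have is
\[
|f(x)-P_n(x)|\le\frac{1}{(r-1)!}\int_{-1}^{x}(x-t)^{r-1}\bigl|f^{(r)}(t)-p(t)\bigr|\,dt ,
\]
and the only information available on the integrand is the base-case bound $c\,\w_k(f^{(r)},\rho_n(t))$, with no control on the sign of the error $f^{(r)}-p$. At $x=0$, say, the integrand is of size $\w_k(f^{(r)},1/n)$ on a set of measure $\sim 1$ (e.g.\ on $[-3/4,-1/2]$, where $\rho_n(t)\sim 1/n$), so the bound you obtain is of order $\w_k(f^{(r)},1/n)$, whereas \ineq{classdir} demands $\rho_n^r(0)\,\w_k(f^{(r)},\rho_n(0))\sim n^{-r}\,\w_k(f^{(r)},1/n)$: you are short by a factor of order $n^{r}$. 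Already for $r=k=1$ and $f'$ Lipschitz this gives $n^{-1}$ against the required $n^{-2}$. This error accumulation is exactly why interior pointwise (Timan-type) estimates for $f$ cannot be deduced by integrating estimates for $f^{(r)}$, and why the cited result in \cite{K} (in the tradition of Telyakovskii, Gopengauz and Dzyadyk) is proved by constructing a kernel approximation to $f$ itself, obtaining the interior and endpoint behavior simultaneously rather than by integration. A secondary gap: the moment-correction step you assert in one clause (``the $r$ free parameters of $p$ permit this, and a standard argument shows the base-case estimate survives'') requires an actual construction of correctors whose moments are of size $\w_k(f^{(r)},1/n)$ yet which are pointwise dominated on the endpoint zones by $\w_k\bigl(f^{(r)},\varphi^{2/k}(x)n^{-2(k-1)/k}\bigr)$, a quantity vanishing at $\pm1$; such correctors can be built (multiples of $(1-t^2)^{k}$ times fixed polynomials), but even granting this, your scheme only salvages the endpoint estimate \ineq{sim2} and never delivers \ineq{classdir}, so the theorem as stated is not proved.
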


It follows from \cite{K}*{Theorem 3}  that, for any $\gamma \in\R$, the quantity  $\varphi^{2/k}(x) n^{-2(k-1)/k}$ in \ineq{sim2} cannot be replaced by $\varphi^{2\beta}(x) n^\gamma$ with $\beta > 1/k$.
Hence, the estimate \ineq{sim2} provides the optimal rate of approximation near the endpoints of $[-1,1]$.

It is a natural question if these estimates are valid if we approximate $q$-monotone functions by $q$-monotone polynomials.
Of course, as is rather well known,       \ineq{classdir} may not be valid in the $q$-monotone case for certain $r$ and $k$ even if $n$ is allowed to depend on the function $f$ that is being approximated.
For example, this is the case if
(i) $1\le q\le 3$, $0\le r\le q-1$ and  $r+k \ge q+2$ (\cite{LS98} if $q=1$, \cite{Yus} if $q=2$ or $q=3$), and
(ii) $q\ge 4$ and $r+k \ge 3$ (\cite{BP}).

Moreover, for any $q, r, k, n\in\N$, there exists a function $f_n\in C^r\cap\qmon$ such that   \ineq{sim2} is not valid for any polynomial $P_n\in\Pn\cap\qmon$ (the construction of such an $f_n$ is the same as in \cite{KLSconspline}, see also \cites{KLS-mon,GLSW,petr}). This means that, in the case $r\ge 1$, \ineq{sim2} cannot be true for all functions $f\in C^r\cap\qmon$  and all $n\ge \NN(k,r,q)$. We emphasize that this does NOT mean that, for each fixed $f\in C^r\cap\qmon$, \ineq{sim2} is invalid for sufficiently large $n$, \ie \ineq{sim2} may still be valid if $n\ge \NN(f)$ (in fact, the proof of this fact in the case $q=k=2$ is the main result of this paper).

  If $r=0$ and $k$ is ``small'', then the situation is  different:  for any $q,n\in\N$, if $r=0$ and $1\le k\le 2$, then   \ineq{classdir} and \ineq{sim2} are both valid for $q$-monotone approximation (it is possible to show that the case for $k=1$ follows from that for $k=2$).
      Indeed, the following interpolatory estimate follows from \cite{DY} ($q=1$), \cites{L, Yu} ($q=2$) and \cite{CG} ($q\ge 3$):
for any $q,n\in\N$ and $f\in C \cap\qmon$, there exists a polynomial $P_n\in\Pn\cap\qmon$ such that
\be \label{lineq}
|f(x)-P_n(x)|\le c(q)\omega_2\left(f,\varphi(x)/n\right),\quad x\in [-1,1],
\ee
where $c$ is an absolute constant. Additionally, \ineq{classdir} and \ineq{sim2} with $n\ge 2$ are valid for convex approximation (\ie $q=2$) if $r=0$ and $k=3$ (\cite{K}), and the case $q=3$, $r=0$ and $k=3$ or $k=4$ is still unresolved (in fact, it is not even known if \ineq{classdir} holds if $(q,r,k)=(3,0, 4)$).

Recently, we were able to show (see \cite{KLS-mon}) that \ineq{classdir} and \ineq{sim2} hold for monotone approximation ($q=1$) if $r\in\N$, $k=2$ and $n\ge\NN(f,r)$, and the main purpose of this paper is to prove an analogous result for convex approximation ($q=2$).
 In fact, we follow similar ideas and apply some of the construction in \cite{KLS-mon}, but there are some additional rather significant technical difficulties that we have to overcome in this case (for example, proofs in the cases for $r=1$ and $r\ge 2$ turn out to be completely different). Also, one of the important tools that we are using is our recent result \cite{KLSconspline} on convex approximation of $f\in C^r\cap\con$, by convex piecewise polynomials (see \thm{thm-convex-spline} below).

The following theorem is the main result in this manuscript.

\begin{theorem}\label{thm1} Given $r\in\N$,   there is a constant $c=c(r)$ with the property that if $f\in C^r\cap\con$, then there exists a number $\NN=\NN(f,r)$, depending on $f$ and $r$, such that for every $n\ge \NN$, there is  $P_n\in \Pn \cap \con$  satisfying
\be\label{interpol}
|f(x)-P_n(x)|\le c(r)\left( \varphi(x)/n \right)^r\omega_2\left(f^{(r)}, \varphi(x)/n\right),\quad x\in[-1,1] .
\ee
Moreover, for $x\in \left[-1, -1+ n^{-2}\right] \cup \left[1-n^{-2}, 1\right]$ the following stronger estimates are valid:
\be\label{interpol1}
|f(x)-P_n(x)|\le c(r)\varphi^{2r}(x)\omega_2\left(f^{(r)}, \varphi(x)/n\right)
\ee
and
\be\label{interpolaux}
|f(x)-P_n(x)|\le c(r)\varphi^{2r}(x) \w_1 \left(f^{(r)},  \varphi^2(x) \right) .
\ee
\end{theorem}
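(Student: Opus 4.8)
The plan is to proceed in two stages: first approximate $f$ by a convex piecewise polynomial that already realizes the desired interpolatory behaviour, and then replace this piecewise polynomial by a genuine convex algebraic polynomial. For the first stage I would apply our convex spline result \cite{KLSconspline}: since $f\in C^r\cap\con$, there is a convex spline $S$ with knots at the Chebyshev nodes $x_j=\cos(j\pi/n)$, interpolating $f$ at $\pm1$, whose error $|f(x)-S(x)|$ is already bounded by the right-hand side of \ineq{interpol}, with the sharper bounds \ineq{interpol1}--\ineq{interpolaux} holding on the two extreme subintervals. Because $S$ is convex and carries the target estimates, the triangle inequality reduces the whole problem to constructing a convex $P_n\in\Pn$ with $|S(x)-P_n(x)|$ dominated by the same right-hand sides; note that the endpoint interpolation $P_n(\pm1)=f(\pm1)$ is then not an extra constraint but an automatic consequence of \ineq{interpol}, since $\varphi(\pm1)=0$.

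For the second stage I would pass to second derivatives. As $S\in\con$, the function $g:=S''$ is a \emph{nonnegative} spline that approximates $f''$ with the corresponding two-orders-lower pointwise rate. The engine of the construction is then a positive (sign-preserving) polynomial approximation: I would build a nonnegative polynomial $q=q_n$, of degree compatible with $\Pn$ after two integrations, such that $q\ge0$ on $[-1,1]$ and $|g(x)-q(x)|$ obeys a weighted estimate matching \ineq{interpol}--\ineq{interpolaux} after two integrations. Setting $P_n(x):=S(-1)+S'(-1)(x+1)+\int_{-1}^x(x-t)\,q(t)\,dt$ gives a polynomial with $P_n''=q\ge0$, so $P_n$ is automatically convex and interpolates $S$ and $S'$ at $-1$; the bound on $|S(x)-P_n(x)|=\big|\int_{-1}^x(x-t)(g(t)-q(t))\,dt\big|$ then follows from the weighted estimate on $g-q$, the extra powers of $\varphi$ near the endpoints arising from the vanishing there of both the integration kernel and $\varphi$, much as in the unconstrained interpolatory theorem \thm{thk-sim}.

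I expect the main obstacle to be the positive approximation of $g=S''$ near the endpoints while simultaneously retaining both nonnegativity and the full interpolatory rate. Where $f''$ (hence $g$) is small or vanishes near $\pm1$, forcing $q\ge0$ without sacrificing the pointwise order is delicate and cannot be achieved for all $n$; this is precisely where the threshold $\NN=\NN(f,r)$ must enter, and its unavoidable dependence on $f$ for $r\ge3$ is dictated by the negative result \thm{gonskaconvex}. Concretely, $\NN$ would be chosen so large that, on the extreme subintervals, the local polynomial pieces of $g$ are reproduced accurately by a nonnegative polynomial, which quantitatively forces $n$ to dominate a modulus-type quantity measuring the endpoint behaviour of $f^{(r)}$. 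Managing the transition between the two extreme intervals and the interior, and assembling the local nonnegative approximations into a single convex $P_n$ without losing the rate, is the technically heaviest part; I would handle it by adapting the partition-of-unity and correction-term construction of the monotone case \cite{KLS-mon} to the present second-derivative setting.
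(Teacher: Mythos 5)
Your first stage matches the paper: the proof there also starts from the convex spline of \cite{KLSconspline} (upgraded in \thm{thm-smooth} to a $C^1$ spline, a smoothing step your outline skips but which the paper's subsequent machinery requires), and reduces the problem to approximating $S$ by a convex polynomial within the same pointwise bounds; your remark that interpolation at $\pm1$ is automatic because $\varphi(\pm1)=0$ is also correct. The genuine gap is in your second stage. You propose to construct a nonnegative polynomial $q$ with a pointwise weighted bound on $|S''-q|$ of the ``two-orders-lower'' form and then integrate twice, claiming that the bound on $|S(x)-P_n(x)|=\bigl|\int_{-1}^x(x-t)(S''(t)-q(t))\,dt\bigr|$ ``follows from the weighted estimate on $S''-q$''. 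It does not: pointwise estimates do not survive double integration, because the integral accumulates the error over all of $[-1,x]$. Concretely, even under the ideal bound $|S''(t)-q(t)|\le c\,(\varphi(t)/n)^{r-2}\omega_2\bigl(f^{(r)},\varphi(t)/n\bigr)$, at $x=0$ the integral is at least of order $n^{-(r-2)}\omega_2(f^{(r)},1/n)$, whereas the target \ineq{interpol} there is $n^{-r}\omega_2(f^{(r)},1/n)$ --- a loss of a factor $n^{2}$, and more generally of order $(n\varphi(x))^{2}$ at interior points. Rescuing the scheme would require not smallness but \emph{cancellation} of $S''-q$ against the kernel $(x-t)$, i.e.\ control of moments of the error, which your proposal never provides; this is also why the analogy with the unconstrained \thm{thk-sim} does not apply, as that theorem is not proved by integrating derivative estimates either.

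This failure is precisely why the paper works in the opposite direction: convexity is enforced at the second-derivative level while the error is controlled at the function level, never by integrating a pointwise estimate on $S''$. Its building blocks $\pp_j$ (\lem{newlem5}) are convex polynomials approximating the truncated linear functions $\Phi_j$ in function values; \lem{uncon} provides a single polynomial $D_{n_1}$ that \emph{simultaneously} approximates $S$ (function level) and $S''$; and \lem{QM} supplies correcting polynomials $\overline Q_n$ and $M_n$ whose second derivatives are strongly positive where $D_{n_1}''$ may dip below $S''$ and only mildly negative elsewhere, while their own size stays within $C\delta_n^\alpha(x)\phi(\rho_n(x))$. The decompositions of $S$ into good/bad parts ($S_3+S_4$, the ``under control'' knots) and the endpoint dichotomy through $d_\pm$ in \thm{step1111}, tied to the first nonvanishing derivative $D_\pm(r,f)$ of $f$ at $\pm1$, are all in service of making these corrections possible; this is indeed where $\NN(f,r)$ enters the polynomial stage, as you correctly anticipated. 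So your intuition about where the difficulty sits (the endpoints, the $f$-dependence of $\NN$) is right, but the central mechanism you propose for producing the convex polynomial would fail by a factor of up to $n^2$.
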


\begin{remark}
 \cite{KLSconspline}*{Theorem 2.3}  implies that \thm{thm1} is NOT valid with $\NN$ independent of $f$.
\end{remark}

We now discuss some corollaries and applications of \thm{thm1}.

Recall that, given a number $\alpha>0$,
  $\Lip^*\alpha $ denotes the class of all functions $f$ on $[-1,1]$ such that
$\w_2(f^{(\lceil\alpha\rceil - 1)}, t) =O\left(  t^{\alpha-\lceil\alpha\rceil+1}\right)$.
Together with the classical inverse theorems (see \eg{} \cite{KLPS}*{Theorem 5 and Corollary 6}), \ineq{classdir} implies that,  if $\alpha>0$, then a function $f$ is in $\Lip^*\alpha$ if and only if
 \be \label{lipin}
 \inf_{P_n\in\Pn} \norm{\rho_n^{-\alpha}  (f -P_n )}{} =  O(1) .
 \ee

\begin{corollary} \label{maincor}
If $\alpha>0$ and  $f \in \Lip^*\alpha\cap\con$, then there exists a constant $C=C(\alpha)$ such that, for all sufficiently large $n$, there are polynomials
$P_n\in \Pn \cap \con$  satisfying
\be \label{auxcor}
|f(x)-P_n(x)| \leq C  \left(  \varphi(x)/n\right)^\alpha, \quad x\in [-1,1].
\ee
\end{corollary}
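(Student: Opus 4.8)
The plan is to apply \thm{thm1} directly, with the appropriate choice of $r$. Set $r := \lceil\alpha\rceil - 1$. Since $\alpha > 2$ we have $\lceil\alpha\rceil \ge 3$, hence $r \ge 2$, so \thm{thm1} is applicable. Moreover, because $f \in \Lip^*\alpha$, by definition $\w_2(f^{(r)}, t) = \w_2(f^{(\lceil\alpha\rceil - 1)}, t) = O\!\left(t^{\alpha - \lceil\alpha\rceil + 1}\right) = O\!\left(t^{\alpha - r}\right)$; in particular $f^{(r)}$ is continuous, so $f \in C^r \cap \con$, which is exactly the hypothesis needed.

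First I would record the bound in a uniform form: the asymptotic relation $\w_2(f^{(r)}, t) = O(t^{\alpha-r})$ as $t \to 0^+$, together with the boundedness of $\w_2(f^{(r)}, \cdot)$ and the fact that $t^{\alpha-r}$ is bounded below on any interval $[\delta,1]$, yields a single constant $M = M(f)$ with
\[
\w_2(f^{(r)}, t) \le M\, t^{\alpha - r}, \qquad 0 < t \le 1.
\]
Here $\alpha - r = \alpha - \lceil\alpha\rceil + 1 \in (0,1]$, so this exponent lies safely below the saturation order $2$ of $\w_2$ and no degeneracy occurs.

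Next I would invoke \thm{thm1} for this $r$: there is $\NN = \NN(f,r)$ such that for all $n \ge \NN$ there is $P_n \in \Pn \cap \con$ with
\[
|f(x) - P_n(x)| \le c(r) \left(\frac{\varphi(x)}{n}\right)^{r} \w_2\!\left(f^{(r)}, \frac{\varphi(x)}{n}\right), \qquad x \in [-1,1].
\]
Since $\varphi(x)/n \le 1/n \le 1$, I may substitute the uniform estimate above with $t = \varphi(x)/n$ to obtain
\[
|f(x) - P_n(x)| \le c(r)\, M \left(\frac{\varphi(x)}{n}\right)^{r} \left(\frac{\varphi(x)}{n}\right)^{\alpha - r} = c(r)\, M \left(\frac{\varphi(x)}{n}\right)^{\alpha}.
\]
Setting $C := c(r)\, M$ and reading ``for all sufficiently large $n$'' as $n \ge \NN(f, \lceil\alpha\rceil - 1)$ gives \ineq{auxcor}.

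The estimate is genuinely immediate once $r$ is chosen correctly, so there is no substantive obstacle; the only points requiring care are the verification that $\alpha > 2$ forces $r = \lceil\alpha\rceil - 1 \ge 2$ (so that \thm{thm1} indeed applies and the convex interpolatory machinery is available), and the routine passage from the asymptotic $O(\cdot)$ to a uniform constant valid on the full range of arguments $\varphi(x)/n$ that actually occur.
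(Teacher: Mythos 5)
Your proposal is correct and matches the paper's intent exactly: the paper states \cor{maincor} as an immediate consequence of \thm{thm1}, obtained precisely by taking $r=\lceil\alpha\rceil-1\ge 2$, applying \ineq{interpol}, and absorbing the constant from $\w_2(f^{(r)},t)=O(t^{\alpha-r})$ into $C$. Your additional remarks (continuity of $f^{(r)}$, passage from the asymptotic $O(\cdot)$ to a uniform bound on $0<t\le 1$) are exactly the routine details the paper leaves implicit.
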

For $0<\alpha<2$, \ineq{auxcor} follows from \ineq{lineq} (and was stated in \cite{L}).

In order to state another  consequence of \thm{thm1} we recall that $W^r$ denotes the space of   functions on $[-1,1]$ for which $f^{(r-1)}$ is  absolutely continuous   and
$\norm{f^{(r)}}{\infty} := \esssup_{x\in[-1,1]} |f^{(r)}(x)|  < \infty$.

\begin{corollary} \label{secondcor}
For any   $f\in W^r \cap\con$, $r\in\N$, there exists a number
$\NN=\NN(f,r)$,   such that 
\be \label{estim1}
\sup_{n \ge \NN} \; \inf_{P_n\in \Pn \cap \con} \norm{ \frac{f-P_n}{\varphi^r (\min\{1/n, \varphi \})^r} }{\infty} \leq  c(r)  \norm{f^{(r)}}{\infty}  .
\ee
In particular,
\be \label{est2}
\sup_{n \ge \NN} \; \inf_{P_n\in \Pn \cap \con} \norm{ \frac{f-P_n}{\varphi^r} }{\infty} \leq \frac{c(r)}{n^r} \norm{f^{(r)}}{\infty}  .
\ee
\end{corollary}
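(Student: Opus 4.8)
The plan is to deduce the Corollary from \thm{thm1} applied with the smoothness index \emph{lowered by one}, converting the regularity that $W^r$ automatically supplies into the $r$-th order estimate by exploiting that $f^{(r-1)}$ is Lipschitz. First observe that \eqref{est2} is an immediate consequence of \eqref{estim1}: since $\min\{1/n,\varphi(x)\}\le 1/n$, one has $\varphi^r(\min\{1/n,\varphi\})^r\le n^{-r}\varphi^r$, so it suffices to establish \eqref{estim1}. It is convenient to record that the weight there is piecewise explicit, namely $\varphi^r(\min\{1/n,\varphi\})^r=(\varphi/n)^r$ when $\varphi(x)\ge 1/n$ and $=\varphi^{2r}$ when $\varphi(x)\le 1/n$, and that $\{x:\varphi(x)\le 1/n\}\subseteq[-1,-1+n^{-2}]\cup[1-n^{-2},1]$ (because $\sqrt{1-n^{-2}}\ge 1-n^{-2}$), which is precisely the region on which the sharpened endpoint estimates \eqref{interpol1}--\eqref{interpolaux} hold.

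For $r\ge 3$ the key point is that $f\in W^r\cap\con$ forces $f\in C^{r-1}\cap\con$ (as $f^{(r-1)}$ is absolutely continuous) and that $f^{(r-1)}\in\Lip 1$ with constant $\norm{f^{(r)}}{\infty}$. I would therefore apply \thm{thm1} with $r-1$ in place of $r$ (legitimate since $r-1\ge 2$) to produce a single $P_n\in\Pn\cap\con$, valid for all $n\ge \NN(f,r-1)=:\NN(f,r)$, that simultaneously satisfies \eqref{interpol}, \eqref{interpol1} and \eqref{interpolaux} with $r$ replaced by $r-1$. The Lipschitz property then enters through the two elementary moduli inequalities $\omega_2(f^{(r-1)},t)\le t\,\omega_1(f^{(r)},t)\le 2t\norm{f^{(r)}}{\infty}$ and $\omega_1(f^{(r-1)},t)\le t\norm{f^{(r)}}{\infty}$.

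Combining these yields \eqref{estim1} regime by regime. On $\{\varphi\ge 1/n\}$ the lowered \eqref{interpol} gives $|f-P_n|\le c(r-1)(\varphi/n)^{r-1}\omega_2(f^{(r-1)},\varphi/n)\le 2c(r-1)(\varphi/n)^r\norm{f^{(r)}}{\infty}$, so dividing by $(\varphi/n)^r$ settles the claim there. On $\{\varphi\le 1/n\}$, which lies in the endpoint region, the lowered \eqref{interpolaux} with $t=\varphi^2$ gives $|f-P_n|\le c(r-1)\varphi^{2(r-1)}\omega_1(f^{(r-1)},\varphi^2)\le c(r-1)\varphi^{2r}\norm{f^{(r)}}{\infty}$, and dividing by $\varphi^{2r}$ finishes. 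Since one and the same $P_n$ serves both regimes, \eqref{estim1} follows for $r\ge 3$, and \eqref{est2} with it.

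The main obstacle is the base case $r=2$, where this reduction is unavailable: it would require a convex interpolatory estimate of order $r-1=1$, and, as remarked after \thm{thm1}, the order-$1$ convex case is not settled. For $r=2$ I would instead invoke the (threshold-free, $\NN=1$) convex $C^2$ interpolatory estimates and pass from $C^2\cap\con$ to $W^2\cap\con$ by a routine approximation argument: mollify a convex linear extension of $f$ past $\pm 1$ to obtain $f_m\in C^\infty\cap\con$ with $f_m\to f$ uniformly and $\norm{f_m''}{\infty}\le\norm{f''}{\infty}$, apply the estimate to each $f_m$ at the fixed degree $n$, and extract a limit using compactness of the resulting bounded family in $\Pn\cap\con$. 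The delicate points to check are that the endpoint-improved form \eqref{estim1} (and not merely \eqref{est2}) is genuinely available at $r=2$ with a degree-independent threshold, and that both convexity and the bound on the second derivative survive the mollification up to the endpoints; granting these, the limiting polynomial inherits \eqref{estim1}.
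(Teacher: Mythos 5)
Your argument for $r\ge 3$ is correct and is, as far as one can tell, exactly the derivation the paper intends: Corollary~\ref{secondcor} is presented as a consequence of \thm{thm1} with no written proof, and the natural route is the one you give --- $f\in W^r\cap\con$ lies in $C^{r-1}\cap\con$ with $\omega_2(f^{(r-1)},t)\le 2t\norm{f^{(r)}}{\infty}$ and $\omega_1(f^{(r-1)},t)\le t\norm{f^{(r)}}{\infty}$, so \ineq{interpol} and \ineq{interpolaux} at level $r-1$ cover the two regimes of the weight $\varphi^r(\min\{1/n,\varphi\})^r$, using also that $\{x: \varphi(x)\le 1/n\}$ lies inside the endpoint region $[-1,-1+n^{-2}]\cup[1-n^{-2},1]$ where \ineq{interpolaux} is available.

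The genuine gap is your treatment of $r=2$, and the proposed repair cannot work. The ``threshold-free ($\NN=1$) convex $C^2$ interpolatory estimates'' you want to invoke do not exist: near $\pm 1$, \ineq{estim1} with $r=2$ demands $|f-P_n|\le c\varphi^4\norm{f''}{\infty}$, which is of the form $\varphi^2\psi$ with $\psi:=c\norm{f''}{\infty}\varphi^2$ positive on $(-1,1)$ and vanishing at $\pm1$; \thm{gonskaconvex} says precisely that for each fixed $n$ there is a convex $f\in C^2$ (hence $f\in W^2$ with $\norm{f''}{\infty}<\infty$) for which no $P_n\in\Pn\cap\con$ can satisfy such a bound. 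So the ``delicate point'' you flagged is exactly where the argument dies: no degree-independent threshold version of \ineq{estim1} exists at $r=2$ (this is the same observation as the remark following \thm{thm1}). Substituting \thm{thm1} at $r=2$ into your mollification-plus-compactness scheme does not save it either: the theorem's threshold $\NN(f_m,2)$ depends on the mollified function $f_m$, nothing bounds $\sup_m\NN(f_m,2)$, and so at a fixed degree $n$ you cannot apply the theorem to all $f_m$ and extract a limit --- the compactness mechanics are fine, but the input estimates for the $f_m$ are missing. (Only \ineq{est2} at $r=2$ is threshold-free and classical, via \ineq{lineq} and $\omega_2(f,t)\le t^2\norm{f''}{\infty}$; the endpoint-improved \ineq{estim1} is the whole difficulty.) For what it is worth, the paper itself never addresses $f\in W^2\setminus C^2$: \thm{thm1} applied with the same $r$ requires $f\in C^r$, and the lowering-by-one reduction requires $r\ge3$, so the obstacle you ran into at $r=2$ is real and is not resolved by anything explicitly stated in the paper.
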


 It follows from \cite{KLSconspline}*{Theorem 2.3} that, if $r\ge 2$ and $r\ge 3$, then, respectively, inequalities \ineq{estim1} and \ineq{est2} are not true, in general, with $\NN$ independent of $f$.
 For all other $r\in\N$, these inequalities hold with $\NN=1$ which is a corollary of \ineq{lineq} with $q=2$.


\sect{Notations and some inequalities for the Chebyshev partition}

Most symbols used in this paper were introduced and discussed in \cite{KLS-mon}. For convenience, we list them
 in the following table which also includes symbols introduced in the previous section.
Note that, in the proofs below (but not in definitions and statements), we often omit writing index ``$n$'' if it does not create any confusion (thus, we write ``$\rho$'' instead of ``$\rho_n$'', ``$x_j$'' instead of ``$x_{j,n}$'', etc.).
 \\[2mm]
\setlength{\extrarowheight}{5.0pt}
 \begin{longtable}{| p{.15\textwidth} | p{.80\textwidth} |}
  \hline
 \multicolumn{2}{|c|}{\bf Chebyshev knots and Chebyshev partition}\\  \hline
$x_j := x_{j,n}$ &  $:= \cos(j\pi/n)$, $0\leq j \leq n$; $1$ for $j<0$ and $-1$  for $j>n$ (Chebyshev knots) \\
$T_n$ & $:=(x_{j})_{j=0}^n$ (Chebyshev partition) \\
$I_j:= I_{j,n}$ & $:= [x_{j},x_{j-1}]$\\
$h_j := h_{j,n}$ & $:= |I_{j,n}|  = x_{j-1}-x_j$\\
$I_{i,j}$ & $:=\bigcup_{k=\min\{i,j\}}^{\max\{i,j\}}I_k = \left[ x_{\max\{i,j\}}, x_{\min\{i,j\}-1} \right]$, $1\le i,j\le n$ (the smallest interval containing both $I_i$ and $I_j$)\\
$h_{i,j}$ & $:=|I_{i,j}|=\sum_{k=\min\{i,j\}}^{\max\{i,j\}}h_k = x_{\min\{i,j\}-1} - x_{\max\{i,j\}}$ \\
$\psi_j$ & $:=\psi_j(x) :=  |I_j|/(|x-x_j|+|I_j|)$ \\
$\varphi(x)$ & $:=\sqrt{1-x^2}$\\
$\rho_n(x)$ & $:= \varphi(x)n^{-1}+ n^{-2}$, $n\in\N$, and $\rho_0(x) \equiv 1$ \\
$\delta_n(x)$ & $:= \min\{ 1, n\varphi(x)\}$\\
  \hline
\multicolumn{2}{|c|}{\bf $k$-majorants}\\  \hline
 $\Phi^k$ & $:=\left\{ \psi\in C[0,\infty)  \;  \big| \;  \psi\uparrow, \;  \psi(0)=0, \; \text{and}\;   t_2^{-k} \psi(t_2) \leq t_1^{-k} \psi (t_1)
   \; \mbox{\rm for $0<t_1\leq t_2$} \right\}$.
 Note: if $f\in C^r$, then $\phi(t) := t^r \w_k(f^{(r)}, t)$  is equivalent to a function from $\Phi^{k+r}$
    \\
  \hline
 \multicolumn{2}{|c|}{\bf Piecewise polynomials on Chebyshev partition}\\  \hline
 $\Sigma _{k}:=\Sigma_{k,n}$ &    the set of   continuous   piecewise polynomials of degree $\leq k-1$ with knots at $x_j$, $1\leq j \leq n-1$\\
 $\Sigma^{(1)}_{k}:=\Sigma^{(1)}_{k,n}$ &    the set of   continuously differentiable piecewise polynomials of degree $\leq k-1$ with knots at $x_j$, $1\leq j \leq n-1$\\
$p_j :=  p_j(S)$ &  $:=  S|_{I_j}$, $1\le j\le n$ (polynomial piece of $S$ on the interval $I_j$)\\
$ b_{i,j}(S,\phi)$ & $\ds := 
\frac{\|p_i-p_j\|_{I_i}}{\phi(h_j)}\left(\frac{h_j}{h_{i,j}}\right)^k$,   where $\phi\in\Phi^k$, $\phi\not\equiv 0$  and $S\in\Sigma_k$ \\
$b_k(S,\phi, A)$ &  $\ds :=\max_{1\leq i,j\leq n} \left\{ b_{i,j}(S,\phi) \st I_i\subset  A\andd I_j\subset  A\right\}$, where an interval $A\subseteq [-1,1]$ contains at least one interval $I_\nu$\\
$b_k(S,\phi)$ &  $:=  b_{k}(S,\phi, [-1,1]) =  \max_{1\le i,j\le n}b_{i,j}(S,\phi)$  \\
 \hline
 \multicolumn{2}{|c|}{\bf Constants}\\  \hline
 $C(\gamma_1, \dots, \gamma_\mu)$ & positive constants depending only on parameters $\gamma_1, \dots, \gamma_\mu$ that may be different on different occurences\\
 $c$ &    positive constants that are either absolute or may only depend on the parameters $k$ and  $r$ (if present) \\
 $C_i$ & positive constants that are fixed throughout this paper\\
 $\overset{\gamma_1, \dots, \gamma_\mu}{\sim}$ &    $A\overset{\gamma_1, \dots, \gamma_\mu}{\sim} B$ iff  $C^{-1} B \le A \le C  B$, for some positive constant $C = C(\gamma_1, \dots, \gamma_\mu)$ \\
 \hline
 \multicolumn{2}{|c|}{\bf Indicator functions and truncated powers}\\  \hline
$ \chi_j(x)$ &  $:= \chi_{[x_j, 1]}(x) := 1$, if $x_j \leq x \leq 1$, and $:= 0$, otherwise\\
$\Phi_j(x)$ & $:=(x-x_j)_+  := (x-x_j)\chi_j(x) = \int_{-1}^x \chi_j(t) dt$  \\

\hline
\end{longtable}

\medskip



We now collect all   facts and inequalities for the Chebyshev partition that we need throughout this paper. Many of them are checked by straightforward calculations (also, see \eg{} \cites{DS,KLS-mon,S} for references).
Unless specified otherwise, it is assumed that $1\le j\le n$, $x,y\in [-1,1]$.

\begingroup
\allowdisplaybreaks
\begin{align}
\label{rho} & n^{-1} \varphi(x) <\rho_n(x)<h_{j} <5\rho_n(x),\quad x\in I_{j}  \\
\label{hj} &h_{j\pm 1} <3h_{j}    \\
\label{rho1} &\rho^2_n(y) <4\rho_n(x)(|x-y|+\rho_n(x))   \\
\label{rho11} &(|x-y|+\rho_n(x))/2 <(|x-y|+\rho_n(y))<2(|x-y|+\rho_n(x)) \\
 \label{newauxest} & \rho_n(x)  \le  |x-x_j|, \;  \text{for any }\; 0\leq j\leq n \andd  x\notin(x_{j+1}, x_{j-1})  \\
 \label{delta} & \delta_n(x)\le n\varphi(x)< \pi\delta_n(x), \;  \text{if}\quad x\in[-1,x_{n-1}]\cup[x_1,1], \andd \delta_n(x)=1, \;  \text{if}\quad x\in[x_{n-1},x_1] \\
 \label{anotherauxest}
& \rho_n^2(x)   < 8 h_j \left(|x-x_j|+\rho_n(x) \right)  \\
\label{auxsum}
& \left( \frac{\rho_n(x)}{\rho_n(x) + |x-x_j|} \right)^2     < c \psi_j(x) \\
\label{distance1}
& \rho_n(x) + |x-x_j| \sim \rho_n(x) + \dist(x, I_j) \\
\label{sumpsi}
& \sum_{j=1}^{n}\psi_j^{2}(x) \leq c  \\
\label{sumest}
& \sum_{j=1}^{n} \left( \frac{\rho_n(x)}{\rho_n(x) + \dist(x, I_j)} \right)^4 \leq c  \\
\label{delta1} 
& c\psi_j^{2}(x)\delta_n^2(x) \le \frac{1-x^2}{ (1+x_{j-1})(1-x_j)} \leq   c \delta_n^2(x) \psi_j^{-2}(x) \\
\label{hjrho}
& c\psi_j^{2}(x)\rho_n(x) \leq h_j \leq c\psi_j^{-1}(x)\rho_n(x) \\
  \label{estphi}
& c \psi_j^{2k}(x) \phi(\rho_n(x)) \le \phi(h_j)  \le  c \psi_j^{-k}(x) \phi(\rho_n(x))
\end{align}
\endgroup

\sect{Auxiliary results on polynomial approximation of indicator functions and truncated powers} \label{auxresults}

 Recall the notation
\be \label{deftj}
t_j(x) := \left( \frac{\cos 2n\arccos x}{x-x_j^0} \right)^2 + \left( \frac{\sin 2n\arccos x}{x-\bar x_j} \right)^2 ,
\ee
where
$\bar x_j := \cos((j-1/2)\pi/n)$ for $1\leq j \leq n$,
$x_j^0 := \cos((j-1/4)\pi/n)$ for $1\leq j <n/2$,
$x_j^0 := \cos((j-3/4)\pi/n)$ for $n/2\leq j \leq n$,
and note that $t_j\in\Poly_{4n-2}$ and, for all $1\leq j \leq n$,
\be \label{behtj}
 t_j(x) \sim (|x-x_j|+h_j)^{-2} , \quad x\in [-1,1],
\ee
 (see  \eg{} \cite{S} or \cite{K-sim}*{(22), Proposition 5}).

For $\gamma_1, \gamma_2\in\N_0$,  $\xi,\mu\in\N$, and $1\le j\le n$, we let
\[
\ntau_j(x) :=  \ntau_{j,n}(x) := \ntau_{j,n}(x; \gamma_1, \gamma_2,\xi,\mu) :=   d_j^{-1} \int_{-1}^x (y-x_j)^{\gamma_1}(x_{j-1}-y)^{\gamma_2}(1-y^2)^\xi t_j^\mu(y)\, dy ,
\]
where $d_j := d_j (\gamma_1, \gamma_2,\xi,\mu)$ is the normalizing constant such that $\ntau_j(1)=1$. Then, it is possible to show (see \eg{} \cite{K-coconvex}*{Proposition 4}) that, for sufficiently large $\mu$, function $\ntau_j$ is well defined and is a polynomial of degree $\le c \mu n$ (with some absolute constant $c$), and
\[
  d_j \sim (1+x_{j-1})^\xi (1-x_j)^\xi h_j^{-2\mu+1+\gamma_1 +\gamma_2} .
\]
Also,
\be \label{intineq}
1-x_{j-1} < \int_{-1}^1 \ntau_j(t)dt < 1-x_j , \quad 1\le j \le n .
\ee
Indeed, denoting for convenience $\vartheta(y) :=(y-x_j)^{\gamma_1}(x_{j-1}-y)^{\gamma_2}(1-y^2)^\xi t_j^\mu(y)$, we have
\begin{eqnarray*}
\lefteqn{ \int_{-1}^1 \ntau_j(t)dt < 1-x_j  \iff  \int_{-1}^1 \int_{-1}^t \vartheta(t) dt < (1-x_j) \int_{-1}^1 \vartheta(t) dt} \\
 &  \iff &
 \int_{-1}^1 (t-x_j) \vartheta(t) dt   =  d_j(\gamma_1+1, \gamma_2, \xi, \mu) > 0
\end{eqnarray*}
(the other inequality is proved similarly).

Now, for the polynomials
\[
\tau_j(x) := \ntau_{j,n}(x; 0,0,\xi,\mu) \andd \ttau_j(x) := \ntau_{j,n}(x; 1,1,\xi,\mu) ,
\]
  the following lemma was proved in \cite{KLS-mon}.


\begin{lemma}[\cite{KLS-mon}*{Lemmas 4.1 and 4.2}]\label{lem5}
If $\alpha,\beta \geq 1$, then for sufficiently large $\xi$ and $\mu$ depending only on $\alpha$ and $\beta$ and for each
 $1\leq j\leq n-1$,
 polynomials
$\tau_j$ and $\ttau_j$of degree $\le C(\alpha,\beta)n$ satisfy
\be \label{taudoubleprime}
\tau_j'(x)\ge C(\alpha,\beta) |I_j|^{-1}\delta_n^{8\alpha}(x)\psi_j^{30(\alpha+\beta)}(x), \quad x\in [-1,1] ,
\ee
\be \label{decreasing}
\ttau_j'(x) \leq 0, \quad \mbox{\rm for }\; x\in [-1, x_j] \cup [x_{j-1}, 1] ,
\ee
and, for all $x\in[-1,1]$,
\be \label{derivatives}
\max\left\{ \left|\tau_j'(x)\right|, \left|\ttau'_j(x) \right| \right\} \leq C(\alpha,\beta) |I_j|^{-1}\delta_n^\alpha(x)\psi_j^\beta(x)
\ee
and
\be\label{tauj}
\max\left\{ |\chi_j(x)-\tau_j(x)|,  |\chi_j(x)-\ttau_j(x)| \right\}     \le
C(\alpha,\beta)\delta_n^\alpha(x) \psi_j^\beta(x) .
\ee
\end{lemma}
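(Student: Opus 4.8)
The plan is to reduce everything to the two \emph{exact} formulas for the derivatives, and then to translate the resulting products of $\varphi$, $h_j$, $1\pm x_j$, $1\pm x_{j-1}$ into the homogeneous $\delta_n$–$\psi_j$ language using \eqref{behtj}, the asymptotics of $d_j$, and the partition inequalities of Section~2, above all \eqref{delta1}. Since $\tau_j(x)=d_j^{-1}\int_{-1}^x\varphi^{2\xi}(y)\,t_j^\mu(y)\,dy$ and $\ttau_j(x)=d_j^{-1}\int_{-1}^x(y-x_j)(x_{j-1}-y)\varphi^{2\xi}(y)t_j^\mu(y)\,dy$ with $d_j>0$ (this positivity being part of the asymptotics of $d_j$ recalled above), differentiation gives $\tau_j'(x)=d_j^{-1}\varphi^{2\xi}(x)t_j^\mu(x)\ge0$ and $\ttau_j'(x)=d_j^{-1}(x-x_j)(x_{j-1}-x)\varphi^{2\xi}(x)t_j^\mu(x)$. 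The sign claim \eqref{decreasing} is then immediate, because $\varphi^{2\xi}$, $t_j^\mu$ and $d_j$ are nonnegative while $(x-x_j)(x_{j-1}-x)\le0$ precisely for $x\notin(x_j,x_{j-1})$, i.e. on $[-1,x_j]\cup[x_{j-1},1]$.

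For the derivative estimates \eqref{taudoubleprime} and \eqref{derivatives} I would insert $t_j(x)\sim(|x-x_j|+h_j)^{-2}$ from \eqref{behtj} and $d_j^{-1}\sim(1+x_{j-1})^{-\xi}(1-x_j)^{-\xi}h_j^{2\mu-1-\gamma_1-\gamma_2}$ into these formulas, and use the identity $(|x-x_j|+h_j)^{-1}=\psi_j(x)/h_j$ together with the elementary bound $|x-x_j|\,|x_{j-1}-x|\le(|x-x_j|+h_j)^2$ needed for $\ttau_j$. All powers of $h_j$ cancel save one, leaving $\tau_j'(x)\sim h_j^{-1}\psi_j^{2\mu}(x)\,W_j^\xi(x)$ and $|\ttau_j'(x)|\lesssim h_j^{-1}\psi_j^{2\mu-2}(x)\,W_j^\xi(x)$, where $W_j(x):=\varphi^2(x)/\big((1+x_{j-1})(1-x_j)\big)$. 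Now \eqref{delta1} sandwiches $W_j(x)$ between $c\,\delta_n^2(x)\psi_j^2(x)$ and $c\,\delta_n^2(x)\psi_j^{-2}(x)$, so $W_j^\xi$ is in turn sandwiched between $c\,\delta_n^{2\xi}\psi_j^{2\xi}$ and $c\,\delta_n^{2\xi}\psi_j^{-2\xi}$. Using $\delta_n,\psi_j\le1$, the upper bound \eqref{derivatives} follows as soon as $2\xi\ge\alpha$ and the surviving power $2\mu-2\xi$ (resp. $2\mu-2-2\xi$) is $\ge\beta$, which holds for $\mu$ large; while the lower bound \eqref{taudoubleprime} follows from the companion estimate $\tau_j'\ge c\,h_j^{-1}\delta_n^{2\xi}\psi_j^{2\mu+2\xi}$. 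Here the exponents $8\alpha$ and $30(\alpha+\beta)$ are deliberately generous, so there is a nonempty window of admissible $\xi,\mu$ (namely $\xi\ge\alpha/2$, with $\mu$ bounded above and below by linear functions of $\alpha,\beta$) for which all of \eqref{taudoubleprime}, \eqref{derivatives} hold at once.

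The indicator estimate \eqref{tauj} I would then deduce by integrating the pointwise derivative bound, applied with $\beta+1$ in place of $\beta$ (the extra power of $\psi_j$ is spent in the integration). Since $\tau_j$ is nondecreasing with $\tau_j(-1)=0$, $\tau_j(1)=1$, for $x\le x_j$ one has $|\chi_j(x)-\tau_j(x)|=\tau_j(x)=\int_{-1}^x\tau_j'$ and for $x\ge x_j$ one has $|\chi_j(x)-\tau_j(x)|=1-\tau_j(x)=\int_x^1\tau_j'$; for $\ttau_j$ the same splitting works after inserting absolute values, using \eqref{decreasing} to see that $\ttau_j\le0$ left of $x_j$ and $\ttau_j\ge1$ right of $x_{j-1}$. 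In either case the computation reduces to $\int(|y-x_j|+h_j)^{-(\beta+1)}\,dy\lesssim(|x-x_j|+h_j)^{-\beta}$, i.e. to $\int\psi_j^{\beta+1}(y)\,dy\lesssim h_j\,\psi_j^\beta(x)$, which after multiplying by $h_j^{-1}\delta_n^\alpha$ yields exactly $\delta_n^\alpha(x)\psi_j^\beta(x)$.

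The main obstacle is this last, integration, step, and specifically the treatment of the factor $\delta_n^\alpha(y)$ under the integral sign: since $\delta_n=\min\{1,n\varphi\}$ is \emph{not} monotone on $[-1,1]$, one cannot simply replace $\delta_n(y)$ by $\delta_n(x)$ everywhere. The remedy is that $\delta_n\equiv1$ on the central interval $[x_{n-1},x_1]$ by \eqref{delta}, so the factor $\delta_n^\alpha$ is only active on the two outermost strips, where $\varphi$—and hence $\delta_n$—is monotone and $\delta_n(y)\le c\,\delta_n(x)$ throughout the relevant range of integration; this is precisely what lets $\delta_n^\alpha(x)$ be pulled out of the integral. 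A secondary bookkeeping point is to verify that one single choice of $\xi,\mu$ depending only on $\alpha,\beta$ (and large enough for $\tau_j,\ttau_j$ to be polynomials with the stated $d_j$-asymptotics) meets all the exponent constraints simultaneously.
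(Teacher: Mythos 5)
Your proposal is correct, but note that this paper does not actually prove Lemma~\ref{lem5}: it imports it verbatim from \cite{KLS-mon}*{Lemmas 4.1 and 4.2}, so the comparison is with the proof in that reference, which proceeds by exactly the computation you describe (exact differentiation, the two-sided bound \ineq{behtj} for $t_j$, the $d_j$-asymptotics, and the partition inequality \ineq{delta1}). Your reconstruction is sound on all the points that need care: the sign argument for \ineq{decreasing} from the factor $(x-x_j)(x_{j-1}-x)$ together with positivity of $d_j$; the reduction of both derivative bounds to $W_j^\xi$ and the sandwich $c\,\delta_n^2\psi_j^2\le W_j\le c\,\delta_n^2\psi_j^{-2}$; and, in the integration step, the observation that for $1\le j\le n-1$ the relevant range of integration ($(-1,x]$ when $x\le x_j$, $[x,1)$ when $x\ge x_j$) never crosses from one endpoint strip to the other, so $\delta_n(y)\le\delta_n(x)$ throughout and $\delta_n^\alpha(x)$ can be pulled out — this is precisely the point where a careless argument would fail. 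Two small remarks. First, for $\ttau_j$ on the middle interval $(x_j,x_{j-1})$ your "insert absolute values" device is all that is needed: $|1-\ttau_j(x)|=\left|\int_x^1\ttau_j'\right|\le\int_x^1|\ttau_j'|$, so the monotonicity statements "$\ttau_j\le 0$ left of $x_j$, $\ge 1$ right of $x_{j-1}$" are not actually required there. Second, your observation that the admissible $(\xi,\mu)$ form a window bounded \emph{above} as well as below (because the lower bound \ineq{taudoubleprime} forces $2\xi\le 8\alpha$ and $2\mu+2\xi\le 30(\alpha+\beta)$, while \ineq{derivatives} and the integration step force $2\xi\ge\alpha$ and $2\mu-2-2\xi\ge\beta+1$) is a point where you are more precise than the lemma's loose phrasing "for sufficiently large $\xi$ and $\mu$"; the window is nonempty for all $\alpha,\beta\ge1$, as you say, and a single choice of $(\xi,\mu)$ in it, large enough for the cited asymptotics of $d_j$ to hold, serves all four estimates simultaneously.
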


\begin{remark}
The statement of this lemma is not valid if $j=n$ since $\chi_n\equiv 1$, $\tau_n(-1)=0$ and $\delta_n(-1)=0$.
\end{remark}

  Inequalities \ineq{intineq} imply that, for each $1\le j \le n-1$, there exists a constant $0<\lambda_j<1$ such that the polynomial
\be \label{qq}
\qq_j(x) := \qq_{j,n}(x) := \qq_{j,n}(x; \gamma_1, \gamma_2,\xi,\mu) := \int_{-1}^x \left( \lambda_j \ntau_j(t) + (1-\lambda_j) \ntau_{j+1} (t)\right) dt
\ee
satisfies $\qq_j(1)=1-x_j$. This implies that, if $\ntau_j$ is such that \ineq{tauj} is satisfied, then $\qq_j$ provides a ``good'' approximation of $\Phi_j$, $1\le j \le n-2$. The proof of this fact is rather standard.
Indeed, first note that, for $1\le j\le n-2$ and $x\in[-1,1]$,
\[
|\chi_j(x)-\chi_{j+1}(x)| \le |\chi_{I_{j+1}}(x)| \le C \delta_n^\alpha(x) \psi_j^\beta(x) \andd \psi_j(x) \sim \psi_{j+1}(x).
\]
Now, if $x\le x_j$, then (assume that $\beta >1$)
\begin{eqnarray*}
|\Phi_j(x)-\qq_j(x)| &\le&  \left|\int_{-1}^x \left( \lambda_j |\ntau_j(t)-\chi_j(t)| + (1-\lambda_j) |\ntau_{j+1} (t)-\chi_j(t)|\right)  dt \right| \\
&\le & C \int_{-1}^x \delta_n^\alpha(t) \psi_j^\beta(t) dt \le C \delta_n^\alpha(x) \int_{-\infty}^x   |I_j|^\beta (x_j-t+|I_j|)^{-\beta}  dt \\
&\le & C |I_j| \delta_n^\alpha(x)\psi_j^{\beta-1} (x)
\end{eqnarray*}
and, if $x>x_j$, then, similarly,
\begin{eqnarray*}
|\Phi_j(x)-\qq_j(x)|  & = &  \left|\int_x^1 \left( \chi_j(t)-\qq_j'(t)\right) \, dt  \right|          \\
&\le&  \left|\int_{x}^1 \left( \lambda_j |\ntau_j(t)-\chi_j(t)| + (1-\lambda_j) |\ntau_{j+1} (t)-\chi_j(t)|\right)  dt \right| \\
&\le & C |I_j| \delta_n^\alpha(x)\psi_j^{\beta-1} (x) .
\end{eqnarray*}

Now, for $1\le j\le n-1$, defining
\be \label{pp}
\pp_j (x):= \pp_{j,n}  (x):= \qq_{2j,2n} (x; 0,0,\xi, \mu) \andd \tpp_j (x):= \tpp_{j,n} (x):= \qq_{2j-1,2n}(x; 1,1,\xi, \mu) ,
\ee
and noting that $x_{j,n} = x_{2j, 2n}$, $h_{j,n} \sim h_{2j,2n}\sim h_{2j-1,2n}$, $\psi_{j,n} \sim \psi_{2j, 2n} \sim \psi_{2j-1, 2n}$, $\delta_{n} \sim \delta_{2n}$,
we   have the following result which follows from \lem{lem5}.

\begin{lemma} \label{newlem5}
If $\alpha,\beta \geq 1$, then for sufficiently large $\xi$ and $\mu$ depending only on $\alpha$ and $\beta$ and for each
$1\leq j\leq n-1$, 
 polynomials
$\pp_j$ and $\tpp_j$ of degree $\le C(\alpha,\beta) n$ defined in \ineq{pp} satisfy
\be \label{newtaudoubleprime}
\pp_j''(x)\ge C(\alpha,\beta) |I_j|^{-1}\delta_n^{8\alpha}(x)\psi_j^{30(\alpha+\beta)}(x), \quad x\in [-1,1] ,
\ee
\be \label{newdecreasing}
\tpp_j''(x) \leq 0, \quad \mbox{\rm for }\;   x\in [-1, x_{j}] \cup [x_{j-1}, 1] ,                               
\ee
and, for all $x\in[-1,1]$,
\be \label{newderivatives}
\max\left\{ \left|\pp_j''(x)\right|, \left|\tpp_j''(x) \right| \right\} \leq C(\alpha,\beta) |I_j|^{-1}\delta_n^\alpha(x)\psi_j^\beta(x)
\ee

\be\label{newtauj}
\max\left\{ |\chi_j(x)-\pp_j'(x)|,  |\chi_j(x)-\tpp_j'(x)| \right\}     \le
C(\alpha,\beta) \delta_n^\alpha(x) \psi_j^\beta(x)
\ee
and
\be\label{secder}
\max\left\{ |\Phi_j(x)-\pp_j(x)|,  |\Phi_j(x)-\tpp_j (x)| \right\}     \le
C(\alpha,\beta) |I_j| \delta_n^\alpha(x) \psi_j^{\beta-1}(x) .
\ee
\end{lemma}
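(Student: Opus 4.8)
The plan is to derive \lem{newlem5} directly from \lem{lem5} by exploiting the doubling relations listed just before the statement. The key observation is that $\pp_j$ and $\tpp_j$ are built from the functions $\qq_{2j,2n}$ and $\qq_{2j-1,2n}$ on the doubled partition $T_{2n}$, and that a $\qq$ is (by its definition \ineq{qq}) an antiderivative of a convex combination of two consecutive $\ntau$'s. Consequently, for any $\qq_m$ on the partition $T_{2n}$ we have $\qq_m' = \lambda_m \ntau_m + (1-\lambda_m)\ntau_{m+1}$ and $\qq_m'' = \lambda_m \ntau_m' + (1-\lambda_m)\ntau_{m+1}'$. This means that each estimate for $\pp_j''$ and $\tpp_j''$ reduces to a pointwise estimate for the $\ntau_m'$, i.e. for the $\tau_m'$ or $\ttau_m'$, which is exactly what \lem{lem5} provides (via \ineq{taudoubleprime}, \ineq{decreasing}, \ineq{derivatives}). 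Similarly, $\pp_j'$ and $\tpp_j'$ are convex combinations of $\tau_m$, $\ttau_m$, so \ineq{newtauj} follows from \ineq{tauj}, and \ineq{secder} is precisely the $\qq$-to-$\Phi$ estimate that was already worked out in detail in the paragraph preceding \ineq{pp}, specialized to the doubled partition.

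Concretely, I would proceed inequality by inequality. For \ineq{newtaudoubleprime}, I would write $\pp_j'' = \lambda_{2j}\tau_{2j}' + (1-\lambda_{2j})\tau_{2j+1}'$ (on $T_{2n}$), observe that both terms are nonnegative and bounded below by the right-hand side of \ineq{taudoubleprime} with $n$ replaced by $2n$; then I would invoke the stated equivalences $h_{j,n}\sim h_{2j,2n}$, $\psi_{j,n}\sim\psi_{2j,2n}$, $\delta_n\sim\delta_{2n}$ to rewrite the lower bound in terms of the partition $T_n$, absorbing all multiplicative factors into the constant $C(\alpha,\beta)$. (One must check that a \emph{single} term of the convex combination already dominates: this is fine since $\lambda_{2j}$ and $1-\lambda_{2j}$ need not be bounded away from $0$ or $1$, but $\tau_{2j+1}'\ge 0$ too, so I would bound $\pp_j''$ below by whichever term has the larger coefficient, or more simply note both terms have matching lower bounds up to the $\psi$-equivalence and pick the one that survives.) For \ineq{newdecreasing}, on $[-1,x_j]\cup[x_{j-1},1]=[-1,x_{2j,2n}]\cup[x_{2j-1,2n}... ,1]$ one uses that \ineq{decreasing} gives $\ttau_{2j-1}'\le 0$ and $\ttau_{2j}'\le 0$ on the relevant subintervals, so their convex combination $\tpp_j''$ is $\le 0$ there. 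The upper bounds \ineq{newderivatives} and the approximation estimates \ineq{newtauj} follow verbatim from \ineq{derivatives} and \ineq{tauj} after the same change of partition.

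The one point needing genuine care, and the main obstacle, is bookkeeping the interval relations in \ineq{newdecreasing}: since $\tpp_j$ uses $\qq_{2j-1,2n}$, which is the antiderivative of a combination of $\ntau_{2j-1}$ and $\ntau_{2j}$, the sign conditions from \ineq{decreasing} for these two indices hold on $[-1,x_{2j-1,2n}]\cup[x_{2j-2,2n},1]$ and $[-1,x_{2j,2n}]\cup[x_{2j-1,2n},1]$ respectively, and I must verify that their intersection still contains the claimed set $[-1,x_{j,n}]\cup[x_{j-1,n},1]=[-1,x_{2j,2n}]\cup[x_{2j-2,2n},1]$. This requires the nesting $x_{2j,2n}\le x_{2j-1,2n}\le x_{2j-2,2n}$ (which holds since the knots decrease in the index) to guarantee both antiderivative pieces are nonincreasing on the common region; away from these endpoints the convexity of $\tpp_j$ is not asserted, consistent with the statement. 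Once these index comparisons are laid out, the rest is a routine transcription, so I would keep that part brief and devote the written proof mostly to making the partition-doubling equivalences and the one sign-tracking argument explicit.
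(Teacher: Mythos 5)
Your proposal is correct and is essentially the paper's own argument: the paper presents \lem{newlem5} as an immediate consequence of \lem{lem5} together with the convex-combination structure of $\qq_j$ in \ineq{qq}, the $\Phi_j$-approximation estimate worked out just before \ineq{pp}, and the partition-doubling equivalences $x_{j,n}=x_{2j,2n}$, $h_{j,n}\sim h_{2j,2n}\sim h_{2j-1,2n}$, $\psi_{j,n}\sim\psi_{2j,2n}$, $\delta_n\sim\delta_{2n}$, which is exactly your route. Your sign-tracking for \ineq{newdecreasing} via the nesting $x_{2j,2n}\le x_{2j-1,2n}\le x_{2j-2,2n}$ is precisely the verification the paper leaves implicit.
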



\sect{Auxiliary results on properties of piecewise polynomials}

\begin{lemma}[\cite{KLS-mon}*{Lemma 5.1}]   \label{b_k<c}
Let $k\in\N$, $\phi\in\Phi^k$, $f\in C[-1,1]$ and $S\in\Sigma_{k,n}$. If
$
\omega_k(f,t)\le\phi(t)
$
and
$ 
|f(x)-S(x)|\le \phi(\rho_n(x))$, $x\in[-1,1]$,
then
$b_k(S, \phi)\le c(k)$.
\end{lemma}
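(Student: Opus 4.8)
The plan is to prove \lem{b_k<c} by reducing the estimate on $b_{i,j}(S,\phi)$ to a comparison between the two polynomial pieces $p_i$ and $p_j$ of $S$, controlled through the known approximation property $|f-S|\le\phi(\rho_n)$ together with the modulus bound $\w_k(f,t)\le\phi(t)$. Fix $1\le i,j\le n$; I may assume $i\ne j$ since otherwise $b_{i,j}=0$. By definition I must bound $\|p_i-p_j\|_{I_i}\,(h_j/h_{i,j})^k/\phi(h_j)$ by a constant depending only on $k$. The key observation is that $p_i-p_j$ is a polynomial of degree $\le k-1$, so its size on $I_i$ is governed by its size on $I_{i,j}$ (the smallest interval containing both $I_i$ and $I_j$) via a Markov--Nikolskii-type inequality for polynomials of degree $\le k-1$ on nested intervals, which produces exactly the factor $(h_{i,j}/h_j)^{k-1}$ — note the exponent $k-1$, so the extra power of $h_j/h_{i,j}$ in the definition of $b_{i,j}$ provides the slack I will need.

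The main step is to estimate $\|p_i-p_j\|_{I_{i,j}}$. Here I would use the triangle inequality $|p_i(x)-p_j(x)|\le|p_i(x)-f(x)|+|f(x)-p_j(x)|$ and then extend each polynomial piece off its own interval. On $I_i$ we have $|p_i-f|\le\phi(\rho_n(x))\le c\,\phi(h_i)$ by \ineq{rho}, and similarly on $I_j$; the difficulty is controlling $p_i$ and $p_j$ on the \emph{whole} of $I_{i,j}$, not just on their home intervals. For this I would invoke a standard Whitney-type / $k$-th difference argument: since $\w_k(f,t)\le\phi(t)$, the best degree-$(k-1)$ polynomial approximation to $f$ on any interval $J$ is $O(\phi(|J|))$, and each $p_\ell$ differs from such a near-best local polynomial on $I_\ell$ by $O(\phi(h_\ell))$. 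Comparing the local near-best polynomials across the chain of intervals making up $I_{i,j}$, using that $\phi\in\Phi^k$ (so $\phi(h_{i,j})/h_{i,j}^k\le\phi(h_j)/h_j^k$, giving $\phi(h_{i,j})\le (h_{i,j}/h_j)^k\phi(h_j)$), I expect to obtain $\|p_i-p_j\|_{I_{i,j}}\le c\,\phi(h_{i,j})\cdot(h_{i,j}/h_j)^{k-1}$ or a comparable bound, where the polynomial-growth factor from extending off $I_i$ to $I_{i,j}$ is absorbed.

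Combining these, the Markov-type inequality gives $\|p_i-p_j\|_{I_i}\le(h_{i,j}/h_j)^{k-1}\|p_i-p_j\|_{I_{i,j}}$, and then
\[
b_{i,j}(S,\phi)=\frac{\|p_i-p_j\|_{I_i}}{\phi(h_j)}\left(\frac{h_j}{h_{i,j}}\right)^k
\le \frac{c\,\phi(h_{i,j})}{\phi(h_j)}\left(\frac{h_j}{h_{i,j}}\right)^k\left(\frac{h_{i,j}}{h_j}\right)^{k-1}
= c\,\frac{\phi(h_{i,j})}{\phi(h_j)}\cdot\frac{h_j}{h_{i,j}}\le c(k),
\]
where the final inequality is precisely the monotonicity built into the class $\Phi^k$ (since $t\mapsto t^{-k}\phi(t)$ is nonincreasing gives $\phi(h_{i,j})h_j\le\phi(h_j)h_{i,j}$ once $h_j\le h_{i,j}$, which holds as $I_j\subseteq I_{i,j}$). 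Taking the maximum over $1\le i,j\le n$ yields $b_k(S,\phi)\le c(k)$.

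The step I expect to be the main obstacle is the uniform control of $\|p_i-p_j\|_{I_{i,j}}$ across the chain of intervals: propagating local Whitney estimates along $I_{i,j}$ without accumulating a factor that grows with the number of intervals $|i-j|$ requires a careful telescoping argument that plays the Markov growth of each polynomial piece off against the $\Phi^k$-decay of $\phi$. Getting the exponents to match so that everything collapses to $c(k)$ — rather than to something depending on $n$ or on $|i-j|$ — is the delicate bookkeeping, and I would organize it as a single induction on the separation $|i-j|$ or appeal directly to the corresponding estimate already established for this setting in \cite{KLS-mon}*{Lemma 5.1}, on which this statement is based.
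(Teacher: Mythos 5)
There is a genuine gap here, and it is compounded by an algebraic error that cannot be repaired. The error is in your final step: you claim that the monotonicity of $t\mapsto t^{-k}\phi(t)$ gives $\phi(h_{i,j})\,h_j\le\phi(h_j)\,h_{i,j}$ when $h_j\le h_{i,j}$. It does not. That inequality says $t\mapsto t^{-1}\phi(t)$ is nonincreasing, which is not part of the definition of $\Phi^k$ and is false for $\phi(t)=t^k\in\Phi^k$ as soon as $k\ge2$ (in this paper the lemma is invoked with $k=r+2\ge4$). What $\phi\in\Phi^k$ actually yields is the opposite-direction bound $\phi(h_{i,j})\le (h_{i,j}/h_j)^k\phi(h_j)$, and feeding that into your displayed chain gives only $b_{i,j}(S,\phi)\le c\,(h_{i,j}/h_j)^{k-1}$, which is unbounded: on the Chebyshev partition $h_{i,j}/h_j$ can be of order $n^2$ (take $j=1$, $i=n$). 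A small additional inconsistency: the restriction $\norm{p_i-p_j}{I_i}\le\norm{p_i-p_j}{I_{i,j}}$ needs no Markov factor at all (the factor $(h_{i,j}/h_j)^{k-1}$ arises when \emph{extending} $p_j-L$ from $I_j$ up to $I_{i,j}$, not when restricting), so your bookkeeping counts that factor in the wrong place.

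The gap is that your central estimate $\norm{p_i-p_j}{I_{i,j}}\le c\,\phi(h_{i,j})(h_{i,j}/h_j)^{k-1}$ --- which is indeed exactly what the ``one Whitney polynomial $L$ on $I_{i,j}$, then extend $p_j-L$ off $I_j$'' argument produces --- is too weak, and the deficit is precisely the factor you then try to erase with the false monotonicity claim. What is actually needed is
\[
\norm{p_i-p_j}{I_i}\le c(k)\left(\frac{h_{i,j}}{h_j}\right)^{k}\phi(h_j),
\]
with the $k$-th power and with $\phi$ evaluated at $h_j$, not at $h_{i,j}$; for $\phi(t)=t^k$ this is smaller than your bound by the factor $(h_{i,j}/h_j)^{k-1}$. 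No single global comparison polynomial can give this. One has to telescope $p_i-p_j=\sum_\ell(p_{\ell+1}-p_\ell)$ along the chain of intervals between $I_j$ and $I_i$, bound each consecutive difference by $c\,\phi(h_\ell)$ on $I_\ell\cup I_{\ell+1}$ (legitimate, since adjacent Chebyshev intervals are comparable by \ineq{hj}), extend each difference to $I_i$ with its own growth factor $(h_{\ell,i}/h_\ell)^{k-1}$, and then prove that the resulting sum $\sum_\ell (h_{\ell,i}/h_\ell)^{k-1}\phi(h_\ell)$ is dominated by $(h_{i,j}/h_j)^k\phi(h_j)$. That last summation is where the specific metric structure of the Chebyshev knots (the quantitative growth of the $h_\ell$ across the chain) must be played off against the $\Phi^k$ decay of $\phi$; it is not a formal consequence of nestedness plus $\phi\in\Phi^k$. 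You correctly identified this as the main obstacle, but you did not carry it out, and your fallback --- appealing to Lemma 5.1 of \cite{KLS-mon} --- is circular, since that lemma \emph{is} the statement being proved (the paper itself offers no argument beyond this citation).
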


\begin{lemma}[\cite{DLS}*{Lemma 2.1}]
\label{important}
Let   $k\ge 3$,  $\phi\in\Phi^k$ and $S\in\Sigma_{k,n}^{(1)}$. Then
$
b_k(S,\phi)\le\,c(k) \norm{  \rho^2_n \phi^{-1}(\rho_n) S'' }{\infty}.
$
\end{lemma}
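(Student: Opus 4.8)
The plan is to bound each $b_{i,j}(S,\phi)$ by $c(k)M$, where $M:=\norm{\rho_n^2\phi^{-1}(\rho_n)S''}{\infty}$, so that $|S''(s)|\le M\phi(\rho_n(s))/\rho_n^2(s)$ for a.e. $s$; taking the maximum over $1\le i,j\le n$ then yields the claim. By symmetry it suffices to treat $i<j$ (and $b_{i,i}=0$), so that $I_i$ lies to the right of $I_j$. The key observation is that, since $S|_{I_i}=p_i$, we have $\|p_i-p_j\|_{I_i}=\|S-p_j\|_{I_i}$, i.e.\ we must measure how well the single polynomial piece $p_j=S|_{I_j}$ approximates $S$ on the far interval $I_i$. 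Because $S\in\Sigma_{k,n}^{(1)}$ is continuously differentiable and agrees with $p_j$ on $I_j$, both $S-p_j$ and its derivative vanish at the right endpoint $x_{j-1}$ of $I_j$; hence for $x\in I_i$ we have the representation
\[
(S-p_j)(x)=\int_{x_{j-1}}^{x}(x-s)\bigl(S''(s)-p_j''(s)\bigr)\,ds .
\]
I would split the right-hand side into the contribution $A$ of $p_j''$ and the contribution $B$ of $S''$ and estimate them separately.

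For the polynomial term $A=\int_{x_{j-1}}^{x}(x-s)p_j''(s)\,ds$, note that $p_j''\in\Pi_{k-3}$ and, by \ineq{rho} (which gives $\rho_n\sim h_j$ on $I_j$), $\norm{p_j''}{I_j}\le c(k)M\phi(h_j)/h_j^2$. A standard polynomial growth inequality (see \eg{} \cites{DS,S}) then yields $|p_j''(s)|\le c(k)\bigl(1+\dist(s,I_j)/h_j\bigr)^{k-3}\norm{p_j''}{I_j}$, and integrating over $I_{i,j}$, whose length is $h_{i,j}$, gives $|A|\le c(k)M\phi(h_j)(h_{i,j}/h_j)^{k-1}$. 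Since $h_{i,j}\ge h_j$ by \ineq{rho}, this is bounded by $c(k)M\phi(h_j)(h_{i,j}/h_j)^{k}$, which is exactly the size needed so that the corresponding part of $b_{i,j}$ is $\le c(k)M$.

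The genuine work is the estimate of $B=\int_{x_{j-1}}^{x}(x-s)S''(s)\,ds$. Using $|S''(s)|\le M\phi(\rho_n(s))/\rho_n^2(s)$ and $|x-s|\le h_{i,j}$, I would decompose $I_{i,j}=\bigcup_{l}I_l$ and, on each $I_l$, replace $\rho_n$ by $h_l$ via \ineq{rho} to obtain
\[
|B|\le c(k)M\,h_{i,j}\sum_{l}\frac{\phi(h_l)}{h_l} .
\]
It then remains to prove the discrete inequality $\sum_{l}\phi(h_l)/h_l\le c(k)\phi(h_j)h_{i,j}^{k-1}/h_j^{k}$, which combined with the above gives $|B|\le c(k)M\phi(h_j)(h_{i,j}/h_j)^{k}$. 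To prove this sum estimate I would split the indices $l$ according to whether $h_l\ge h_j$ or $h_l<h_j$: for $h_l\ge h_j$ the majorant property $\phi(t)/t^k\downarrow$ gives $\phi(h_l)/h_l\le (\phi(h_j)/h_j^k)h_l^{k-1}$, and summation together with $\sum_l h_l^{k-1}\le h_{i,j}^{k-1}$ (valid since $k-1\ge1$) yields the required bound; for the finitely many $l$ with $h_l<h_j$, which necessarily sit near an endpoint, I would use $\phi(h_l)\le\phi(h_j)$ together with the explicit decay $h_l\sim |l|/n^2$ of the Chebyshev step-sizes near the endpoints to bound $\sum 1/h_l$. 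This last summation is the main obstacle, and it is precisely here that the hypothesis $k\ge3$ enters: it guarantees that $h_{i,j}^{k-1}/h_j^k$ dominates the near-endpoint sum. Finally, combining the bounds on $A$ and $B$ gives $\|p_i-p_j\|_{I_i}\le c(k)M\phi(h_j)(h_{i,j}/h_j)^{k}$, i.e.\ $b_{i,j}(S,\phi)\le c(k)M$, and taking the maximum over $i,j$ completes the proof.
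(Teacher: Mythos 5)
Your proposal is correct, but note first that the paper itself contains no proof of this lemma: it is imported verbatim from \cite{DLS}*{Lemma 2.1}, so there is no internal argument to compare against line by line. Judged on its merits, your argument is sound and is the natural route for statements of this type: the reduction $\norm{p_i-p_j}{I_i}=\norm{S-p_j}{I_i}$, the Taylor representation from $x_{j-1}$ (legitimate precisely because $S\in\Sigma^{(1)}_{k,n}$, so $S-p_j$ and $(S-p_j)'$ both vanish there), the Chebyshev-type growth bound for $p_j''\in\Poly_{k-3}$ yielding $|A|\le c(k)M\phi(h_j)(h_{i,j}/h_j)^{k-1}$, and the treatment of the indices with $h_l\ge h_j$ via the majorant property $t^{-k}\phi(t)\downarrow$ together with the superadditivity bound $\sum_l h_l^{k-1}\le h_{i,j}^{k-1}$ (valid since $k-1\ge 1$) are all correct, with \ineq{rho} doing exactly the work you assign to it.

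One phrase is wrong, though harmlessly so: the indices $l$ with $h_l<h_j$ need \emph{not} be ``finitely many'' in any bounded sense. Since the Chebyshev lengths $h_l$ are unimodal in $l$, if $i=1$ and $j\approx n/2$ then $h_l\le h_j$ for every $l$ in the range --- about $n/2$ indices. What saves you is that your actual mechanism never uses their count, only the bound on $\sum 1/h_l$: with $\phi(h_l)\le\phi(h_j)$ and $h_l\sim l/n^2$ for the near-endpoint indices, the sum of $1/h_l$ over them is at most $c\,n^2\ln(j/i)$ (say, when $i<j\le n/2$), while $h_{i,j}^{k-1}/h_j^{k}\sim n^2\,(j^2-i^2)^{k-1}/j^{k}$, so the required domination reduces to $\ln(j/i)\le c\,j^{k-2}\bigl(1-i^2/j^2\bigr)^{k-1}$; checking separately $j\le 2i$ and $j>2i$ shows this holds exactly when $k\ge3$ and fails by a logarithm when $k=2$, confirming your remark about where the hypothesis $k\ge3$ enters. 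To make the proof complete you should carry out this verification, including the remaining configurations (both indices in the same half of $[-1,1]$, and the range straddling the midpoint, where $h_j\sim(n+1-j)/n^2$); each case goes through by the same computation, so the gap is one of detail, not of ideas.
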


The following lemma on simultaneous polynomial approximation of piecewise polynomials and their derivatives
is an immediate corollary of \cite{KLS-mon}*{Lemma 8.1} (with $q=r=2$ and $k\ge 2$).

\begin{lemma}[\cite{KLS-mon}*{Lemma 8.1}]    \label{uncon}
Let $\gamma >0$, $k\in\N$, $\phi\in\Phi^k$, and let $n, n_1\in\N$ be such that $n_1$ is divisible by $n$. If
 $S\in\Sigma_{k,n}$, then there exists a polynomial $D_{n_1}(\cdot, S)$ of degree $\leq Cn_1$ such that
\be \label{7.23'}
\left|S(x)-D_{n_1}(x, S)\right|\le
C\delta_n^{\gamma}(x)  \phi(\rho_n(x))   b_k(S,\phi).
\ee
Moreover, if   $S\in  C^{1}$  and $A := [x_{\mu^*}, x_{\mu_*}]$, $0\leq \mu_* < \mu^* \leq n$, then for all $x\in A\setminus \{x_j\}_{j=1}^{n-1}$,
 we have
 \begin{eqnarray} \label{7.24'}
  \left|S''(x)-D_{n_1}''(x,S)\right|
 &\le &  C \delta_n^{\gamma}(x)
\frac { \phi(\rho_n(x))}{\rho_n^2(x)} \bigg(  b_k(S,\phi,A) +   b_k(S,\phi) \times     \\ \nonumber
 &&
\times   \frac{n}{n_1} \left(\frac{\rho_n(x)}{ \dist(x,[-1,1] \setminus A)}\right)^{\gamma+1}   \bigg).
\end{eqnarray}
All constants $C$  may depend only on $k$ and $\gamma$ and are independent of the ratio $n_1/n$.
\end{lemma}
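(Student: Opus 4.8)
The plan is to write $S$ through truncated powers on the Chebyshev knots and then replace each truncated power by a polynomial building block of degree $\le Cn_1$, in the spirit of \lem{lem5} and \lem{newlem5}. Since $S\in\Sigma_{k,n}$ is continuous and of degree $\le k-1$ on each $I_j$, telescoping the jumps of the pieces $p_j=p_j(S)$ across the interior knots $x_1,\dots,x_{n-1}$ gives
\[
S(x)=p_n(x)+\sum_{l=1}^{n-1}\sum_{i=1}^{k-1}a_{l,i}\,(x-x_l)_+^i ,
\]
with no $i=0$ term because $S$ is continuous. Here $\sum_{i=1}^{k-1}a_{l,i}(x-x_l)^i=p_l-p_{l+1}$, so comparing adjacent pieces in the definition of $b_k(S,\phi)$ and using \ineq{hj} together with the monotonicity of $\phi\in\Phi^k$ gives $\|p_l-p_{l+1}\|_{I_l}\le C\phi(h_l)\,b_k(S,\phi)$; the equivalence of norms on $I_l$ then yields $|a_{l,i}|\le C\phi(h_l)\,h_l^{-i}\,b_k(S,\phi)$, and the same bound holds with $b_k(S,\phi,A)$ in place of $b_k(S,\phi)$ whenever the jump at $x_l$ is accounted for inside $A$. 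For each $1\le i\le k-1$ I would then fix a polynomial $g_{l,i}$ of degree $\le Cn_1$ approximating $(x-x_l)_+^i$, built from the normalized kernel $\ntau_l$ (equivalently $t_l^\mu$) with the power $\mu$ of size $\sim n_1/n$; raising $\mu$ is what lifts the degree to $\sim n_1$ and simultaneously sharpens the $\psi_l$-decay of the error. Setting
\[
D_{n_1}(x,S):=p_n(x)+\sum_{l=1}^{n-1}\sum_{i=1}^{k-1}a_{l,i}\,g_{l,i}(x)
\]
gives a polynomial of degree $\le Cn_1$.

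For \ineq{7.23'} I would invoke the building-block bound $|(x-x_l)_+^i-g_{l,i}(x)|\le Ch_l^i\,\delta_n^\gamma(x)\,\psi_l^\beta(x)$, the analogue of \ineq{secder} for the $i$-th power, so that
\[
|S(x)-D_{n_1}(x,S)|\le C\,\delta_n^\gamma(x)\,b_k(S,\phi)\sum_{l=1}^{n-1}\phi(h_l)\,\psi_l^\beta(x).
\]
Then \ineq{estphi} turns $\phi(h_l)$ into $C\psi_l^{-k}(x)\,\phi(\rho_n(x))$, and, choosing $\beta\ge k+2$, \ineq{sumpsi} sums the remaining $\psi_l^{\beta-k}(x)$ to a constant, which is exactly the right-hand side of \ineq{7.23'}.

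The estimate \ineq{7.24'} is the heart of the matter. Since $S\in C^1$ forces $a_{l,1}=0$, two differentiations give
\[
S''(x)-D_{n_1}''(x,S)=\sum_{l=1}^{n-1}\sum_{i=2}^{k-1}a_{l,i}\big(i(i-1)(x-x_l)_+^{i-2}-g_{l,i}''(x)\big),
\]
and I would split the sum according to whether $I_l\subset A$ or not. For $I_l\subset A$ the local bound $|i(i-1)(x-x_l)_+^{i-2}-g_{l,i}''(x)|\le Ch_l^{i-2}\,\delta_n^\gamma(x)\,\psi_l^\beta(x)$, the analogue of \ineq{newderivatives}, combines with $|a_{l,i}|\,h_l^{i-2}\le C\phi(h_l)\,h_l^{-2}\,b_k(S,\phi,A)$; then \ineq{hjrho} and \ineq{estphi} convert $\phi(h_l)\,h_l^{-2}\,\psi_l^\beta$ into $C\rho_n^{-2}(x)\,\phi(\rho_n(x))\,\psi_l^{\beta-k-4}(x)$, whose sum is bounded by \ineq{sumpsi} for $\beta$ large, producing the local term $b_k(S,\phi,A)$. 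For $I_l\not\subset A$ the knot $x_l$ lies at distance $\gtrsim\dist(x,[-1,1]\setminus A)$ from $x$, and there I would use the improved tail bound on $g_{l,i}''$, which (because $\deg g_{l,i}\sim n_1$) carries a gain of at least $n/n_1$; summing the surviving decay against \ineq{sumest} extracts the factor $(\rho_n(x)/\dist(x,[-1,1]\setminus A))^{\gamma+1}$ and delivers the second term, now with the global $b_k(S,\phi)$.

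The main obstacle is precisely this far-knot contribution to \ineq{7.24'}. One must construct the $g_{l,i}$ so that $g_{l,i}''$ approximates the truncated-power second derivative with a tail that genuinely improves with the degree, yielding the gain $n/n_1$, while keeping the endpoint factor at the coarse scale $\delta_n^\gamma$. These two demands pull against each other: the blocks cannot simply be built on the finer partition $T_{n_1}$, whose scale $\delta_{n_1}\ge\delta_n$ would weaken the endpoint decay, so the extra degree has to be injected through the kernel power $\mu$ rather than through the mesh. Reconciling this inside a single construction, and then carrying out the summation so that exactly the power $\gamma+1$ lands on the distance-to-boundary factor, is the technical core; everything else reduces to the Chebyshev-partition inequalities collected earlier.
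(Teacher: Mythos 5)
First, note that the paper does not prove this lemma at all: it is imported verbatim from \cite{KLS-mon}*{Lemma 8.1}, so your attempt has to be measured against the proof given there. Your skeleton is indeed the same as that proof's: write $S=p_n+\sum_{l}(p_l-p_{l+1})\chi_l$ (your truncated-power expansion is exactly this, since $p_l-p_{l+1}=\sum_i a_{l,i}(x-x_l)^i$ with $a_{l,0}=0$ by continuity and $a_{l,1}=0$ when $S\in C^1$), bound the coefficients through $b_k(S,\phi)$, replace the truncated pieces by polynomial ``switches'', and sum using \ineq{hj}, \ineq{hjrho}, \ineq{estphi}, \ineq{sumpsi}, \ineq{sumest}. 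Your derivation of \ineq{7.23'} and of the local part of \ineq{7.24'} is fine \emph{modulo the existence of the building blocks} $g_{l,i}$.

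The genuine gap is that those building blocks are precisely the content of the lemma, and the mechanism you propose for constructing them fails. You require $g_{l,i}$ of degree $\le Cn_1$ with (i) endpoint decay at the coarse scale $\delta_n^\gamma$, (ii) a far-field gain of $n/n_1$ in the second-derivative error, and (iii) all constants independent of $n_1/n$; you defer this as ``the technical core'' and, worse, you propose to achieve it by taking the coarse kernel $t_l=t_{l,n}$ raised to a power $\mu\sim n_1/n$, explicitly rejecting the fine partition $T_{n_1}$. This cannot work. If $\mu$ is fixed, the blocks do not depend on $n_1$ at all, so neither does the error $S''-D_{n_1}''$; but for a fixed $S$ which is linear on $A$ and not globally linear, \ineq{7.24'} forces that error to be $O(n/n_1)$ on the interior of $A$, which is impossible for an $n_1$-independent construction --- the factor $n/n_1$ must come from genuine $n_1$-dependence. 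If instead $\mu\sim n_1/n$, then every estimate degrades exponentially in $\mu$: the equivalence \ineq{behtj} holds only up to fixed absolute constants $c_1\le c_2$, so $t_l^\mu$ and the normalizing constants $d_l$ are controlled only up to factors $(c_2/c_1)^\mu$, and the constants $C(\alpha,\beta)$ in the analogues of \ineq{derivatives}, \ineq{tauj}, \ineq{newderivatives}, \ineq{secder} blow up with $\mu$; with $\mu\sim n_1/n$ even \ineq{7.23'} would then hold only with a constant depending on $n_1/n$, contradicting the last sentence of the lemma. In \cite{KLS-mon} the factor $n/n_1$ is obtained from fine-scale localization: the switches are built on the refined partition $T_{n_1}$ with a \emph{fixed} kernel power (so constants depend only on $k$ and $\gamma$), exploiting $\rho_{n_1}(x)\le (n/n_1)\rho_n(x)$, so that each spare power of a fine-scale factor is worth $n/n_1$; the endpoint tension you correctly identify ($\delta_{n_1}\ge\delta_n$) is resolved inside one kernel, by retaining coarse-scale endpoint factors and spending some of the fine-scale powers to convert fine-scale quantities back to coarse ones, not by abandoning the fine partition. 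As written, your proposal acknowledges the crux but offers a route to it that is quantitatively inconsistent with the uniformity in $n_1/n$ that the lemma asserts.
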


\sect{Convex polynomial approximation of   piecewise polynomials with  ``small'' derivatives}

\begin{lemma}\label{step11}
Let $\alpha>0$, $k\in\N$ 
and $\phi\in\Phi^k$,
be given. If $S\in\Sigma_{k,n} \cap\con$    is such that
\be\label{approx21}
|S''(x)|\le
\frac{\phi(\rho_n(x))}{\rho_n^2(x)},\quad x\in [x_{n-1},x_1]\setminus
\{x_j\}_{j=1}^{n-1},
\ee
 \be \label{jumps1}
 0\leq S'(x_j+) - S'(x_j-) \leq \frac{\phi(\rho_n(x_j))}{\rho_n(x_j)}, \quad 1\leq j \leq n-1 ,
 \ee
and
\be\label{approx31}
S''(x)=0, \quad x\in [-1, x_{n-1}) \cup (x_1, 1],
\ee
then there is a polynomial $P \in\con\cap\Poly_{Cn}$, $C=C(k,\alpha)$,    such that
\be\label{approx1}
|S(x)-P (x)|\le C(k,\alpha) \delta_n^\alpha(x) \,\phi\left(\rho_n(x)\right),\quad x\in [-1,1].
\ee
\end{lemma}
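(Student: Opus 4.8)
The plan is to construct $P$ directly as a nonnegative combination of convex building blocks, so that membership $P\in\con$ is automatic, and then to control $\|S-P\|$ \emph{locally}. First I would record the convexity data of $S$: put $a_j:=S'(x_j+)-S'(x_j-)$, so that $0\le a_j\le\phi(\rho_n(x_j))/\rho_n(x_j)$ by \ineq{jumps1}, and set $g:=S''$, which is a nonnegative piecewise polynomial of degree $\le k-3$ (nonnegative by convexity of $S$ on each $I_j$), supported in $[x_{n-1},x_1]$ by \ineq{approx31}, and satisfying $0\le g\le\phi(\rho_n)/\rho_n^2$ by \ineq{approx21}. Integrating the measure identity $S''=\sum_{j=1}^{n-1}a_j\,\delta_{x_j}+g(x)\,dx$ twice gives
\[
S(x)=\ell(x)+\sum_{j=1}^{n-1}a_j\,\Phi_j(x)+\sum_{j=2}^{n-1}\int_{I_j}(x-t)_+\,g(t)\,dt,
\]
with $\ell$ affine, and this is the representation I would approximate term by term.

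For the jump part I would replace each $\Phi_j$ by $\pp_j$ from \lem{newlem5}; since $\pp_j''\ge0$ by \ineq{newtaudoubleprime} and $a_j\ge0$, the piece $\sum_j a_j\pp_j$ is convex. For the absolutely continuous part I would use that, for $x$ lying off $I_j$, the inner integral equals $M_j\,(x-\bar t_j)_+$ exactly, where $M_j:=\int_{I_j}g$ and $\bar t_j:=M_j^{-1}\int_{I_j}t\,g(t)\,dt\in I_j$ is the centroid; thus I need only reproduce a single corner at $\bar t_j\in[x_j,x_{j-1}]$. I would do so by a convex bump $\beta_j:=M_j\big(\theta_j\pp_j+(1-\theta_j)\pp_{j-1}\big)$ with $\theta_j\in[0,1]$ chosen so that the kink $\theta_j x_j+(1-\theta_j)x_{j-1}$ falls at $\bar t_j$; as $\pp_j$ and $\pp_{j-1}$ are convex, $\beta_j''\ge0$. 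Then $P:=\ell+\sum_{j}a_j\pp_j+\sum_j\beta_j\in\con\cap\Poly_{Cn}$. For the error, \ineq{secder} bounds each term's discrepancy by $\lesssim(\text{mass})\cdot|I_j|\,\delta_n^\alpha(x)\,\psi_j^{\beta-1}(x)$, with mass $a_j$ or $M_j$. Using \ineq{rho} with the bounds on $a_j$ and $g$ gives $a_j|I_j|\lesssim\phi(h_j)$ and $M_j|I_j|\lesssim\phi(h_j)$, while \ineq{estphi} gives $\phi(h_j)\,\psi_j^{\beta-1}(x)\lesssim\psi_j^{\beta-1-k}(x)\,\phi(\rho_n(x))$. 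Summing over $j$ and choosing $\beta$ (hence $\xi,\mu$ in \lem{newlem5}) large enough that $\sum_j\psi_j^{\beta-1-k}\lesssim1$ via \ineq{sumpsi} and \ineq{sumest}, I obtain $|S(x)-P(x)|\le C\delta_n^\alpha(x)\phi(\rho_n(x))$, which is \ineq{approx1}. For $k\le2$ one has $g\equiv0$, so only the piecewise-linear jump part is present and the same estimate holds.

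The main obstacle is precisely obtaining a \emph{local} error carrying the endpoint decay $\delta_n^\alpha$, rather than a quantity that accumulates along $[-1,x]$. This is why matching the mass of the convexity measure alone is not enough: one must also match the first moment on each $I_j$, so that the affine far field is reproduced and the discrepancy stays confined to a neighbourhood of $I_j$, where the $\psi_j$-decay in \ineq{secder} makes the sum summable. It also explains why the two natural shortcuts fail---integrating a monotone polynomial approximation of $S'$, or adding a single global convex corrector to an unconstrained approximant of $S$: each would force an auxiliary object of pointwise size $\sim\phi(\rho_n)/\rho_n^2$, whose double integral over the central region is of order $n^2\phi(1/n)$, far larger than the target $\phi(1/n)$. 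The remaining points---solvability of the centroid condition for $\theta_j$ (immediate, since $\bar t_j\in[x_j,x_{j-1}]$), the degree bound $\deg P\le Cn$ from \lem{newlem5}, and the convexity $\beta_j''\ge0$---are routine.
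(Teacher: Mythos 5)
Your proposal is correct, and it runs on the same engine as the paper's proof---the convex polynomials $\pp_j$ of \lem{newlem5} with nonnegative coefficients, the local estimate \ineq{secder}, and the summation via \ineq{rho}, \ineq{estphi} and \ineq{sumpsi}---but it handles the absolutely continuous part of $S''$ by a genuinely different device. The paper never splits $S''$ into atoms plus a density: it replaces $S$ by its piecewise linear interpolant $S_1$ at the Chebyshev knots, bounds $|S-S_1|\le c\,\omega_2(S,h_j;I_j)\le c\,\phi(h_j)$ on each $I_j$ by Whitney's inequality, and observes that this loss carries no $\delta_n^\alpha$ factor exactly where none is needed: by \ineq{approx31}, $S$ is linear on $I_1\cup I_n$, so $S_1\equiv S$ there, while $\delta_n\equiv1$ on $[x_{n-1},x_1]$. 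After that, only kinks survive ($S_1$ is affine plus $\sum_j\alpha_j\Phi_j$ with $0\le\alpha_j\le c\,\phi(h_j)/h_j$, via Markov and Whitney inequalities together with \ineq{jumps1} and \ineq{approx21}), and the proof concludes exactly as your jump-part argument does. You instead keep the density $g=S''$ and restore locality by matching both the mass $M_j$ and the centroid $\bar t_j$ of $g$ on each $I_j$ with a two-kink convex bump; this works: the inner integral does equal $M_j(x-\bar t_j)_+$ off $I_j$, the weight $\theta_j\in[0,1]$ exists because $\bar t_j\in I_j$, and the residual error inside $I_j$ is at most $2M_jh_j\le c\,\phi(h_j)\sim\phi(\rho_n(x))$, acceptable there since $\delta_n\equiv1$ on $I_j$ for $2\le j\le n-1$. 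What the paper's route buys is brevity---interpolation at the knots makes the error $S-S_1$ local automatically, with no moment bookkeeping; what your route buys is an exact reproduction of the far field and a clean isolation of the crux, which your last paragraph articulates correctly: matching mass alone would leave a non-decaying error of order $\phi(h_j)$ per interval, whose sum over $j$ is far too large. Two small repairs: for the final summation only \ineq{sumpsi} is needed (not \ineq{sumest}), with $\beta\ge k+3$ so that $\psi_j^{\beta-1-k}\le\psi_j^2$; and in deducing $a_j\le c\,\phi(h_j)/h_j$ from \ineq{jumps1} you should invoke the monotonicity of $\phi$ together with $h_j/5<\rho_n(x_j)<h_j$ from \ineq{rho}, rather than any monotonicity of $\phi(t)/t$, which can fail for $\phi\in\Phi^k$.
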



\begin{proof}
Denote by $S_1$ the  piecewise linear continuous function interpolating $S$ at the points $x_j$, $0\le j\le n$, and let $l_j:= S_1\big|_{I_j}$. Then $S_1\in \con$,         
\be\label{approx51}
S_1(x)=S(x),\quad x\in I_1\cup I_n,
\ee
and, for $x\in I_j$, $1\le j \le n$, we have by Whitney's inequality and \ineq{rho}
\[
|S(x)-S_1(x)| \le c \w_2(S, h_j; I_j) \le c h_j^2 \|S''\|_{L_\infty(I_j)} \le c \phi(h_j) ,
\]
which can be rewritten as
\be\label{approx41}
|S(x)-S_1(x)|\le c\phi(\rho_n(x)),\quad x\in[-1,1].
\ee
We now write $S_1$ as
\[
S_1(x) = S_1(-1)+ S_1'(-1)(x+1)+\sum_{j=1}^{n-1} \aj \Phi_j(x) , \quad \aj := S_1'(x_j+)-S_1'(x_j-) ,
\]
note that, by Markov and Whitney inequalities,
\begin{eqnarray*}
0\le \aj &=& l_j'(x_j)- l_{j+1}'(x_j) \le c h_j^{-1} \norm{l_j-l_{j+1}}{I_{j}\cup I_{j+1}} \le c h_j^{-1} \w_2(S, h_j;  I_{j}\cup I_{j+1}) \\
& \le &   c h_j \left( \norm{S''}{L_\infty(I_{j})} + \norm{S''}{L_\infty(I_{j+1})} \right)     + c \left(  S'(x_j+) - S'(x_j-) \right)    \\
& \le &   c h_j^{-1} \phi(h_j) , \quad 1\le j \le n-1 .
\end{eqnarray*}
Now, if
\[
P(x) := S_1(-1)+ S_1'(-1)(x+1)+\sum_{j=1}^{n-1} \aj \pp_j(x) ,
\]
then $P$ is a convex polynomial of degree $\le Cn$ and,
in view of \ineq{approx51} and \ineq{approx41}, we only need to estimate $|S_1(x)-P(x)|$.
Note that \ineq{hjrho} implies, for all $1\leq j\leq n$ and $x\in [-1,1]$,
\[
\phi(h_j) \leq \phi\left( c\psi_j^{-1}(x)\rho_n(x) \right) \leq C\,\psi_j^{-k}(x)  \phi\left(  \rho_n(x) \right).
\]
Hence,
by \lem{newlem5}   and \ineq{sumpsi}, we have
\begin{eqnarray*}
|S_1(x)-P(x)|&\le & \sum_{j=1}^{n-1}\aj |\Phi_j(x)-\pp_j(x)| \le C \sum_{j=1}^{n-1} \phi(h_j) \delta_n^{\alpha}(x) \psi_j^{\beta}(x) \\
& \leq &
C \delta_n^{\alpha}(x) \phi \left(  \rho_n(x) \right)  \sum_{j=1}^{n-1}\psi_j^{\beta-k}(x)
  \leq
C \delta_n^{\alpha}(x) \phi \left(  \rho_n(x) \right)   ,
\end{eqnarray*}
provided $\beta \ge k+2$.
\end{proof}

\sect{One particular polynomial with controlled second derivative} \label{yetanother}

All constants $C$ in this section may depend on $k$, $\alpha$ and $\beta$.

We start with the following auxiliary lemma.

\begin{lemma}[\cite{LS2002}*{Lemma 9}] \label{lemcom}
Let $A := \{j_0, \dots, j_0+l_0\}$ and let $A_1,A_2 \subset A$ be such that $\#A_1=2l_1$ and $\#A_2=l_2$. Then, there exist $2l_1$ constants $a_i$, $i\in A_1$, such that $|a_i|\le (l_0/l_1)^2$ and
\[
\frac{1}{l_2} \sum_{j\in A_2} (x-x_j) + \frac{1}{l_1} \sum_{j\in A_1} a_j (x-x_j) \equiv 0 .
\]
\end{lemma}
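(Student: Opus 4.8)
My plan is to exploit that, once the coefficients are fixed, the left-hand side is an affine function of $x$, so the required identity holds iff the coefficients of $x^1$ and $x^0$ both vanish. Writing $\bar x := \frac1{l_2}\sum_{i\in A_2}x_i$ for the mean of the knots indexed by $A_2$ (which lies in $[x_{j_0+l_0},x_{j_0}]$) and equating the two coefficients to zero, the problem becomes equivalent to the two moment conditions
\[
\sum_{j\in A_1} a_j = -l_1 \andd \sum_{j\in A_1} a_j x_j = -l_1\bar x .
\]
In other words, I must represent the vector $-l_1(1,\bar x)$ as $\sum_{j\in A_1}a_j(1,x_j)$ with all $|a_j|\le (l_0/l_1)^2$; the set $A_2$ enters only through the single number $\bar x$.

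Since $\#A_1=2l_1$ is even, I would write $A_1=\{j_1<\cdots<j_{2l_1}\}$ (so $x_{j_1}>\cdots>x_{j_{2l_1}}$, as $x_k$ decreases in $k$) and split $A_1$ into the $l_1$ \emph{nested} pairs $\{j_m,j_{2l_1+1-m}\}$, $1\le m\le l_1$. Putting $\Sigma:=\sum_{m=1}^{l_1}\big(x_{j_m}-x_{j_{2l_1+1-m}}\big)>0$, I would then define
\[
a_{j_m} := -\frac{l_1}{\Sigma}\big(\bar x - x_{j_{2l_1+1-m}}\big) \andd a_{j_{2l_1+1-m}} := -\frac{l_1}{\Sigma}\big(x_{j_m}-\bar x\big) .
\]
A one-line computation shows that the $m$-th pair contributes exactly $-\tfrac{l_1}{\Sigma}\big(x_{j_m}-x_{j_{2l_1+1-m}}\big)\,(1,\bar x)$ to $\sum_j a_j(1,x_j)$, so summing over $m$ and using $\sum_m\big(x_{j_m}-x_{j_{2l_1+1-m}}\big)=\Sigma$ produces both moment conditions simultaneously. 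Moreover, all knots indexed by $A$ lie in an interval of length $\Lambda:=x_{j_0}-x_{j_0+l_0}$, so each numerator above is at most $\Lambda$ in absolute value, giving the clean uniform bound $|a_j|\le l_1\Lambda/\Sigma$ for every $j\in A_1$.

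It thus remains to prove the single inequality $l_1\Lambda/\Sigma\le (l_0/l_1)^2$, equivalently $\Sigma\ge l_1^3\Lambda/l_0^2$, and this is the heart of the argument. The quantity $\Sigma$ measures how far apart the knots of $A_1$ are forced to be; it is smallest when the $2l_1$ knots of $A_1$ are packed into the $2l_1$ consecutive knots adjacent to whichever end of the block $A$ is nearer to $\pm1$, since that is where the Chebyshev knots are densest. Using $x_{k-1}-x_k=h_k\sim\big(\min\{k,n-k\}+1\big)/n^2$, this extremal packing yields $\Sigma\sim l_1^3/n^2$ against $\Lambda\sim l_0^2/n^2$, which is precisely where the exponent $2$ in $(l_0/l_1)^2$ comes from. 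I would make this rigorous either by reducing to the extremal configuration via the convexity of $k\mapsto 1-x_k$ on $[0,n/2]$ (and of $k\mapsto 1+x_k$ on $[n/2,n]$), or by estimating the partial sums of the $h_k$ directly from \ineq{hj} and the formula $x_k=\cos(k\pi/n)$. Establishing this lower bound for $\Sigma$ — i.e. quantifying the maximal clustering of Chebyshev knots inside a block relative to the block length — is the main obstacle; everything preceding it is elementary linear algebra.
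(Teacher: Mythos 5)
Your linear algebra is correct and complete: the identity holds iff $\sum_{j\in A_1}a_j=-l_1$ and $\sum_{j\in A_1}a_jx_j=-l_1\bar x$, your paired coefficients do satisfy both conditions (each pair indeed contributes $-\tfrac{l_1}{\Sigma}\bigl(x_{j_m}-x_{j_{2l_1+1-m}}\bigr)(1,\bar x)$), and $|a_j|\le l_1\Lambda/\Sigma$ follows since $\bar x$ and all knots of $A_1$ lie in the block. But the argument stops exactly where the content of the lemma begins: the inequality $\Sigma\ge l_1^3\Lambda/l_0^2$ is asserted, supported only by a $\sim$-computation for one ``packed'' configuration and two strategies that are named but not carried out. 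This is a genuine gap, for two reasons. First, order-of-magnitude estimates cannot close it, because the constant $(l_0/l_1)^2$ has no slack: already for $l_1=1$, $j_0=0$, $A_1=\{0,1\}$, $A_2=\{l_0\}$, the two moment conditions determine the $a_i$ uniquely and give $|a_1|=\Lambda/\Sigma=\sin^2\bigl(l_0\pi/(2n)\bigr)/\sin^2\bigl(\pi/(2n)\bigr)\to l_0^2$ as $n\to\infty$, so the bound is attained in the limit and any lost multiplicative constant is fatal. Second, the reduction to the extremal configuration is itself a nontrivial discrete rearrangement claim, and the convexity you invoke ($k\mapsto 1-x_k$ convex) holds only on half of the index range, so blocks straddling $n/2$ would need a separate treatment. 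Note also that the paper under review gives no proof to compare against --- it quotes the lemma from \cite{LS2002} --- so this estimate is precisely the burden of the exercise.

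For what it is worth, the gap can be closed without any extremal-configuration analysis, by bounding each pair separately. By the symmetry $x_k=-x_{n-k}$ you may assume $2j_0+l_0\le n$. Put $d_m:=j_{2l_1+1-m}-j_m\ge 2(l_1-m)+1$ and $\sigma_m:=j_m+j_{2l_1+1-m}$; then
\[
x_{j_m}-x_{j_{2l_1+1-m}}=2\sin\frac{d_m\pi}{2n}\,\sin\frac{\sigma_m\pi}{2n},
\qquad
\Lambda=2\sin\frac{l_0\pi}{2n}\,\sin\frac{(2j_0+l_0)\pi}{2n}.
\]
Since $2j_0+d_m\le\sigma_m\le 2j_0+2l_0-d_m\le 2n-(2j_0+d_m)$, unimodality of $\sin$ on $[0,\pi]$ gives $\sin\frac{\sigma_m\pi}{2n}\ge\sin\frac{(2j_0+d_m)\pi}{2n}$, and concavity (i.e.\ $\sin(\lambda\theta)\ge\lambda\sin\theta$ for $0\le\lambda\le1$) gives
\[
\sin\frac{d_m\pi}{2n}\ge\frac{d_m}{l_0}\sin\frac{l_0\pi}{2n}
\quad\text{and}\quad
\sin\frac{(2j_0+d_m)\pi}{2n}\ge\frac{2j_0+d_m}{2j_0+l_0}\sin\frac{(2j_0+l_0)\pi}{2n}\ge\frac{d_m}{l_0}\sin\frac{(2j_0+l_0)\pi}{2n},
\]
whence $x_{j_m}-x_{j_{2l_1+1-m}}\ge(d_m/l_0)^2\Lambda$ and
\[
\Sigma\ \ge\ \frac{\Lambda}{l_0^2}\sum_{m=1}^{l_1}\bigl(2(l_1-m)+1\bigr)^2
\ =\ \frac{\Lambda}{l_0^2}\cdot\frac{l_1(4l_1^2-1)}{3}\ \ge\ \frac{l_1^3}{l_0^2}\,\Lambda .
\]
With this (or an equivalent) estimate supplied, your construction yields a complete proof; without it, what you have is a correct reduction, not a proof.
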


 \begin{lemma}\label{QM}
Let $\alpha >0$, $k\in\N$, $k\ge 2$, $\beta>0$ be sufficiently large ($\beta \ge k+7$ will do) and let $\phi\in\Phi^k$ be of the form $\phi(t):=t \psi(t)$, $\psi\in\Phi^{k-1}$.
Also,
let $E\subset [-1,1]$ be a closed interval which is the union of $m_E \ge 100$ of the
intervals $I_j$, and let a set $J\subset  E$ consist of $m_J$   intervals $I_j$, where
$1\le m_J < m_E/4$. Then there exists a
polynomial $Q_n(x)=Q_n(x,E,J)$ of degree $\le Cn$, satisfying
\begin{align}  \label{Qu}
Q_n''(x)&\ge C
\frac {m_E}{m_J} \delta_n^{\alpha_1}(x) \frac{\phi(\rho_n(x))}{\rho_n^2(x)}
\left(\frac{\rho_n(x)}{\max\{\rho_n(x),\dist (x,E)\}}\right)^{\beta_1},       \quad x\in J\cup([-1,1]\setminus E),
\end{align}
\be\label{Qu1}
Q_n''(x)\ge -\delta_n^{\alpha}(x) \frac{\phi(\rho_n(x))}{\rho_n^2(x)},\quad x\in E\setminus J,
\ee
and
\be   \label{Qu2}
|Q_n(x)|  \le    C \,m_E^{k_1}\delta_n^{\alpha}(x) \rho_n(x)\,\phi(\rho_n(x))\sum_{j: \, I_j\subset  E}\frac{h_j}{(|x-x_j|+\rho_n(x))^2},  \qquad   x\in[-1,1],
\ee
where $\alpha_1=8\alpha$,  $\beta_1= 60(\alpha+\beta)+k+1$ and $k_1=k+6$.
\end{lemma}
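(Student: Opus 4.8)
The plan is to build $Q_n$ out of the convex polynomials $\pp_j$ of \lem{newlem5}, applied with the present $\alpha$ and with $\beta$ chosen large (so that $\beta\ge k+7$ and the exponent bookkeeping below closes); recall that these satisfy $\pp_j''\ge0$ on all of $[-1,1]$ together with the two-sided bounds \ineq{newtaudoubleprime} and \ineq{newderivatives} and the approximation estimate \ineq{secder}. Writing $w_j:=\phi(h_j)/h_j=\psi(h_j)$, I would start from the amplified sum
\[
M(x):=\frac{m_E}{m_J}\sum_{I_j\subset J}w_j\,\pp_j(x),
\]
whose second derivative is nonnegative everywhere. On an interval $I_{j_0}\subset J$ the term $j=j_0$ dominates: there $\psi_{j_0}\sim1$ and $h_{j_0}\sim\rho_n$, so by \ineq{newtaudoubleprime} and $w_{j_0}\sim\phi(\rho_n)/\rho_n$ we get $M''\gtrsim\frac{m_E}{m_J}\delta_n^{8\alpha}\phi(\rho_n)/\rho_n^2$, which is \ineq{Qu} on $J$ (the extra factor is $\sim1$ there since $\dist(x,E)=0$). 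For $x\notin E$ every $\pp_j''$ is still positive and decays like $\psi_j^{30(\alpha+\beta)}$; converting $\psi_j$ into $\rho_n/\max\{\rho_n,\dist(x,E)\}$ through \ineq{hjrho} and \ineq{distance1} yields the factor $(\rho_n/\max\{\rho_n,\dist(x,E)\})^{\beta_1}$ in \ineq{Qu}.

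\emph{Controlling the size.} Since $\pp_j\approx\Phi_j=(x-x_j)_+=\tfrac12(x-x_j)+\tfrac12|x-x_j|$ by \ineq{secder}, the linear parts of the terms of $M$ accumulate and $|M|$ grows linearly to the right of $J$; no single linear subtraction can be small on both sides of $J$. I would cancel this accumulated slope by \lem{lemcom}: taking $A_2$ to be the indices of $J$ and $A_1\subset E\setminus J$ a set of $2l_1\sim m_E$ indices (available because $m_J<m_E/4$), and absorbing the variation of the weights $w_j$ across $E$ through the majorant comparison \ineq{estphi}, I obtain coefficients $c_i$, $I_i\subset E\setminus J$, with $|c_i|\lesssim\frac{m_J}{m_E}w_i$ for which $\sum_{I_j\subset J}w_j(x-x_j)=\sum_{I_i\subset E\setminus J}c_i(x-x_i)$ identically. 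Setting
\[
Q_n(x):=M(x)-\frac{m_E}{m_J}\sum_{I_i\subset E\setminus J}c_i\,\pp_i(x),
\]
the matching of linear parts together with $|x-x_j|=\pm(x-x_j)$ on each side of $E$ forces $Q_n$, outside $E$, to collapse to the small errors $\pp-\Phi$ of \ineq{secder}, so $|Q_n|$ is small on both components of $[-1,1]\setminus E$. The bound \ineq{Qu2} then follows by replacing each $\pp$ by $\Phi$ via \ineq{secder}, estimating the surviving localized parts by $\sum_{I_j\subset E}h_j(|x-x_j|+\rho_n)^{-2}$ with the help of \ineq{hjrho}, \ineq{auxsum} and \ineq{estphi}; the bounded coefficients of \lem{lemcom} and the repeated use of \ineq{estphi} across $E$ produce the polynomial factor $m_E^{k_1}$, $k_1=k+6$.

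\emph{The second-derivative lower bound.} Because the subtracted terms have $\pp_i''\ge0$, on $E\setminus J$ the only negative contribution is $-\frac{m_E}{m_J}c_i\pp_i''$; using $|c_i|\lesssim\frac{m_J}{m_E}w_i$ and \ineq{newderivatives} this is bounded below, at a point of $I_i$, by $-C\delta_n^\alpha\phi(\rho_n)/\rho_n^2$, giving exactly the slack permitted by \ineq{Qu1} (the remaining constant is absorbed by enlarging $\mu,\xi$). The genuinely delicate point is the positive lower bound \ineq{Qu} on $J$ and outside $E$ \emph{uniformly up to the endpoints}: there the main term is controlled from below only by the high power $\delta_n^{8\alpha}$ of \ineq{newtaudoubleprime}, whereas the correction is controlled from above by the lower power $\delta_n^{\alpha}$ of \ineq{newderivatives}, so a crude absolute-value estimate of the correction would overwhelm the main term as $\delta_n\to0$.

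\emph{Main obstacle.} This reconciliation is the heart of the proof and cannot be carried out by modulus bounds on the correction. One must use that $A_1$ may be placed at a definite distance from $J$, so that on $J$ (and, symmetrically, for $x\notin E$) each subtracted $\pp_i''$ carries the strong decay $\psi_i^{\beta}$ with $\beta$ as large as we like, and that the $c_i$ arise from the \emph{exact} cancellation of \lem{lemcom} rather than being merely bounded. The deliberately generous exponents $\alpha_1=8\alpha$ and $\beta_1=60(\alpha+\beta)+k+1=2\cdot30(\alpha+\beta)+k+1$ are precisely the room built in to absorb the loss: the leading $J$-term decays only like $\psi^{30(\alpha+\beta)}$, leaving a full factor $\psi^{30(\alpha+\beta)}$ to dominate the subtracted terms after the $\psi$- and $\delta$-comparisons. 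Making this domination quantitative, uniformly in $x$ up to $\pm1$ and in the positions of $E$ and $J$, is the step I expect to require the most care.
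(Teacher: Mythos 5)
Your proposal contains a genuine gap, and it is the one you yourself flag at the end: the lower bound \ineq{Qu} on $J\cup([-1,1]\setminus E)$ cannot be rescued by any domination argument of the kind you sketch. If the correction is built, as you propose, entirely out of the polynomials $\pp_i$ with coefficients of both signs, then at a point $x\in J$ (or $x\notin E$) the terms with positive subtracted coefficient contribute \emph{negatively} to $Q_n''(x)$, and the only available upper bound for them is \ineq{newderivatives}, which carries the factor $\delta_n^{\alpha}(x)$, while the main term is bounded below via \ineq{newtaudoubleprime} only with the factor $\delta_n^{8\alpha}(x)$. Near $\pm 1$ (i.e., on $I_1\cup I_n$ and just outside $E$ when $E$ reaches an endpoint) one has $\delta_n(x)\to 0$, and the ratio $\delta_n^{\alpha}/\delta_n^{8\alpha}=\delta_n^{-7\alpha}$ blows up; the factors $\psi_i^{\beta}(x)$ do not help, because $\psi_i(x)$ tends to a fixed positive constant (not to $0$) as $x\to\pm1$, so no choice of $\beta$, no placement of the correction set away from $J$, and no appeal to the exactness of the cancellation in \lem{lemcom} (which concerns only linear parts, hence says nothing about second derivatives) can close this. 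This is exactly where interpolatory estimates live, so the gap is fatal to the approach, not a technicality. (A secondary soft spot: \lem{lemcom} is an unweighted cancellation lemma, and your weighted version with $|c_i|\lesssim (m_J/m_E)w_i$ is asserted, not derived; the paper avoids this entirely by using one common weight.)

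The paper's proof resolves the sign problem structurally rather than by domination, and this is the idea your proposal is missing: it uses \emph{two} families of primitives from \lem{newlem5}, the everywhere-convex $\pp_j$ and the polynomials $\tpp_j$, which by \ineq{newdecreasing} satisfy $\tpp_j''\le 0$ off $I_j$. Writing $\A:=\J\cup\{j_*,j^*\}$ (note the endpoint intervals of $E$ are included, which is what produces the decay in $\dist(x,E)$ rather than $\dist(x,J)$ outside $E$) and choosing, via \lem{lemcom}, bounded coefficients $\lambda_j$ supported on a set $\widetilde\B$ inside the ``middle third'' $\widetilde E$ of $E$ so that \ineq{comb} holds, the paper sets
\[
Q_n(x) := \kappa\, \frac{\phi(h_*)}{h_*}\left( \frac{m_E}{m_J}\sum_{j\in\A}\pp_j(x)+\sum_{j\in\widetilde\B}\lambda_j\,\opp_j(x)\right),
\qquad
\opp_j:=\begin{cases}\tpp_j,&\lambda_j<0,\\ \pp_j,&\lambda_j\ge 0,\end{cases}
\]
with the single weight $\phi(h_*)/h_*$ ($h_*$ the largest interval of $\widetilde E$) and a small absolute constant $\kappa$. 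Then every correction term satisfies $\lambda_j\opp_j''(x)\ge 0$ for all $x\notin I_j$, in particular on all of $J\cup([-1,1]\setminus E)$, so \ineq{Qu} follows from the $\A$-sum alone with no competition to beat; on $E\setminus J$ the only possibly negative contribution at $x\in I_{j_0}$ is the single term $\lambda_{j_0}\tpp_{j_0}''(x)$, which is controlled by \ineq{newderivatives} together with $h_*\sim h_{j_0}$ (a consequence of the middle-third construction, \ineq{compare} and \ineq{middle}) and the factor $\kappa$, giving \ineq{Qu1} with constant exactly $1$. The size bound \ineq{Qu2} then goes as you anticipated: compare $Q_n$ with its truncated-power analogue $L$ via \ineq{secder}, and use the exact cancellation \ineq{comb} to get $L\equiv 0$ outside $E$. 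So your outline is sound for \ineq{Qu2}, but the heart of the lemma --- the two second-derivative bounds holding simultaneously up to the endpoints --- requires the $\pp_j$/$\tpp_j$ sign device, which cannot be replaced by the quantitative domination you hoped for.
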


\begin{proof}
As in the proof of \cite{KLS-mon}*{Lemma 9.1}, we  may assume
 that $I_n \not\subset E$ provided that   the condition $m_J < m_E/4$ is replaced by   $m_J \leq m_E/4$.
Also, we use the same notation that was used in \cite{KLS-mon}:
 $\rho := \rho_n(x)$, $\delta := \delta_n(x)$, $\psi_j := \psi_j(x)$,
\begin{eqnarray*}
\E &:=&  \left\{ 1\leq j \leq n \st I_j \subset E\right\} , \quad
\J  :=   \left\{ 1\leq j \leq n \st I_j \subset J\right\},\\
j_* &:=&  \min \left\{ j \st j\in \E\right\}, \quad  j^*  :=   \max \left\{ j \st j\in \E\right\} , \\
\A &:= & \J \cup \{j_*, j^*\} \andd \B := \E \setminus \A .
\end{eqnarray*}
 %
Now, let
  $\widetilde E\subset E$ be the subinterval of $E$ such that
\begin{itemize}
\item[(i)] $\widetilde E$ is a union of $\lfloor m_E/3 \rfloor$ intervals $I_j$, and  
\item[(ii)]  $\widetilde E$ is  centered at $0$ as much as $E$   allows it, \ie among all subintervals of $E$ consisting of $\lfloor m_E/3 \rfloor$ intervals $I_j$, the center of $\widetilde E$ is closest to $0$.
\end{itemize}
Then (see \cite{KLS-mon}),
\be\label{compare}
{\rm if}\quad I_j \subset \widetilde E\quad{\rm and}\quad I_i \subset E\setminus \widetilde E,\quad{\rm then}\quad |I_j|\geq |I_i|,
\ee
\be \label{middle}
|I_j| \sim   \frac{|\widetilde E|}{m_E}, \quad \mbox{\rm for all }\; I_j \subset \widetilde E ,
\ee
and, with $\widetilde \E  :=   \left\{ 1\leq j \leq n \st I_j \subset \widetilde E\right\}$ and $\widetilde \B := \B \cap \widetilde\E = \widetilde\E \setminus\A$,
\be \label{numbers}
\# \widetilde \B \ge m_E/20.
\ee
Note that index $j=n$ is in none of  the sets $\A$, $\B$ and $\widetilde \B$.

It follows from \lem{lemcom} ($l_0 \sim m_E$, $l_1 \sim \lfloor \#\widetilde\B/2\rfloor \sim m_E$, $l_2 \sim m_J$)   that there exist   constants $\lambda_i$, $i\in \widetilde\B$, such that
$|\lambda_i| \le c$, $i\in \widetilde\B$,
and
\be \label{comb}
\frac{m_E}{m_J} \sum_{j\in\A} (x-x_j) +   \sum_{j\in\widetilde\B} \lambda_i (x-x_j) \equiv 0.
\ee
We now let $i_*$ be such that $I_{i^*}$ is the largest interval in $\widetilde E$ and $h_* := h_{i^*}  = |I_{i^*}|$, and
\[
Q_n(x) := \kappa \frac{\phi(h_*)}{h_*}  \left( \frac{m_E}{m_J} \sum_{j\in\A}   \pp_j(x)   +   \sum_{j\in\widetilde \B} \lambda_j  \opp_j(x) \right) ,
\]
where $\kappa$ is a sufficiently small absolute constant to be prescribed  and
\[
\opp_j :=
\begin{cases}
\tpp_j , & \mbox{\rm if}\; \lambda_j <  0 , \\
\pp_j, & \mbox{\rm if}\; \lambda_j \ge   0 .
\end{cases}
\]
 It follows from \ineq{compare}    that
 \[
 h_j \le h_*, \quad j\in  \E ,
 \]
 and so $\rho \le h_*$ and $ \phi(\rho)/\rho = \psi(\rho) \le \psi(h_*) =   \phi(h_*)/h_*$, for all $x\in E$ as well as all $x\not\in E$ such that  $h_* \ge  \rho$. If $x\not\in E$ and $h_* < \rho$, then by \ineq{rho}, \ineq{rho1} and \ineq{rho11}
 \begin{align*}
 \frac{\phi(h_*)}{h_*}  & \ge \frac{\phi(\rho) h_*^{k-1}}{\rho^k} \ge \frac{\phi(\rho)}{\rho^k} \max\{ h_{j_*}^{k-1}, h_{j^*}^{k-1}\}
 \ge c \frac{\phi(\rho)}{\rho^k} \cdot \frac{\rho^{2k-2}}{ (\min\{|x-x_{j_*}|, |x-x_{j^*}|\} + \rho )^{k-1}} \\
 & \ge c \phi(\rho) \frac{\rho^{k-2}}{\left(\max\{\rho,\dist (x,E)\}\right)^{k-1}} .
 \end{align*}
 Hence,
 \be \label{lest}
 \frac{\phi(h_*)}{h_*}  \ge c \frac{\phi(\rho)}{\rho}  \left( \frac{\rho}{\max\{\rho,\dist (x,E)\}} \right)^{k-1} , \quad \mbox{\rm for all}\; x\in [-1,1] .
 \ee

We now note that $\lambda_i \opp_j''(x)\ge 0$ if $j\in\B$ and $x\in J\cup([-1,1]\setminus E)$ (as well as for any $x\in I_{j_*} \cup I_{j^*}$).
Hence,   for these $x$, using \lem{newlem5},   \ineq{hjrho}, \ineq{auxsum} and \ineq{lest}   we have
\begin{eqnarray*}
Q_n''(x) & \geq & \kappa  \frac{\phi(h_*)}{h_*} \cdot  \frac{m_E}{m_J} \sum_{j\in\A}    \pp_j''(x)  \\
& \geq &
C \kappa \delta^{8\alpha}(x) \frac{\phi(h_*)}{h_*} \cdot \frac{m_E}{m_J} \sum_{j\in\A}    h_j^{-1}    \psi_j^{30(\alpha+\beta)}  \\
& \geq &
C \kappa \delta^{8\alpha}(x) \frac{\phi(h_*)}{\rho h_*} \cdot \frac{m_E}{m_J}    \sum_{j\in\A} \psi_j^{30(\alpha+\beta)+1}  \\
&\geq &
C \kappa \delta^{8\alpha}(x) \frac{\phi(h_*)}{\rho h_*} \cdot \frac{m_E}{m_J}   \sum_{j\in\A} \left( \frac{\rho}{\rho+|x-x_j|} \right)^{60(\alpha+\beta)+2} \\
& \geq &
C \kappa \delta^{8\alpha}(x) \frac{m_E}{m_J} \cdot  \frac{\phi(\rho)}{\rho^2} \left(\frac{\rho }{\max\{\rho ,\dist (x,E)\}}\right)^{60(\alpha+\beta)+k+1} ,
\end{eqnarray*}
 since, for $x\not\in E$, $\max\{\rho, \dist(x, E)\} \sim \min \left\{ |x-x_{j^*}|,|x-x_{j_*}|\right\} + \rho$, and, if $x\in J$, then $x\in I_j$ for some $j\in \A$,   so that
 $\rho/(|x-x_j|+\rho) \sim 1$ for that $j$.

If $x\in E\setminus J$ and $x\not\in I_{j_*} \cup I_{j^*}$,  then  there exists $j_0\in \B$ such that $x\in I_{j_0}$. If $j_0 \not\in\widetilde\B$, or if $j_0  \in\widetilde\B$ and $\lambda_{j_0} \ge 0$,  then, clearly, $Q_n''(x) \ge 0$.
 Otherwise, since $h_* \sim h_{j_0}$ by \ineq{middle}, we have using \ineq{newderivatives}
\begin{eqnarray*}
Q_n''(x) & \geq &   \kappa \lambda_{j_0} \phi(h_{*}) h_{*}^{-1} \tpp_{j_0}''(x)
  \geq   - C \kappa \phi(h_{j_0})  h_{j_0}^{-2} \delta^\alpha \psi_{j_0}^\beta  \\
  & \geq &
  - C \kappa  \frac{\phi(\rho)}{\rho^2} \delta^\alpha \geq - \frac{\phi(\rho)}{\rho^2} \delta^\alpha ,
\end{eqnarray*}
for sufficiently small $\kappa$.

We now estimate $|Q_n(x)|$.
Let
\[
L(x) := \kappa \frac{\phi(h_*)}{h_*}  \left( \frac{m_E}{m_J} \sum_{j\in\A}   \Phi_j(x)   +   \sum_{j\in\widetilde \B} \lambda_j  \Phi_j(x) \right) ,
\]

It follows from \cite{KLS-mon}*{(9.8)} that, for any $j\in\E$, $c m_E \le |E|/h_j \le m_E^2$. This implies that $h_* \le c |E|/m_E \le c m_E h_j$, $j\in\E$, and so
$\phi(h_*) \leq c m_E^k \phi(h_j)$, $j\in\E$.
Hence, using  \ineq{secder} as well as the estimate (see \cite{KLS-mon}*{pp. 1282-1283})
\[
\sum_{j\in\E} \phi(h_j) \psi_j^{\beta-1} \le C \phi(\rho) \sum_{j\in\E}   \frac{h_j \rho}{(|x-x_j|+\rho)^2}
\]
which is true if $\beta \ge k+7$,  we have  
\begin{eqnarray*}
\lefteqn{|Q_n(x)-L(x)|}\\
 & =& \kappa \frac{\phi(h_*)}{h_*}  \left| \frac{m_E}{m_J} \sum_{j\in\A}   \left( \pp_j(x) -\Phi_j(x) \right)   +   \sum_{j\in\widetilde\B} \lambda_j   \left( \opp_j(x) -\Phi_j(x) \right)  \right| \\
& \leq & C m_E \delta^\alpha \frac{\phi(h_*)}{h_*} \sum_{j\in\E} h_j  \psi_j^{\beta-1}
     \le C m_E^{k+1}  \delta^\alpha  \sum_{j\in\E} \phi(h_j) \psi_j^{\beta-1}       \\
 &\leq&   C m_E^{k+1} \delta^\alpha \phi(\rho) \sum_{j\in\E}   \frac{h_j \rho}{(|x-x_j|+\rho)^2}.
\end{eqnarray*}
It remains to estimate $|L(x)|$. First assume that $x\not\in E$. If $x\le x_{j^*}$, then $\Phi_j(x)=0$, $j\in\A \cup\widetilde \B$, and $L(x)=0$. If, on the other hand, $x>x_{j_*}$, then $\Phi_j(x)=x-x_j$, $j\in\A \cup\widetilde \B$, so that \ineq{comb} implies that $L(x)= 0$.  Hence, in particular, $L(x)=0$ for $x\in I_1\cup I_n$.

Suppose now that  $x\in E\setminus I_1$ (recall that we already assumed that $E$ does not contain $I_n$).
Then, as above,
   $h_* \leq c |E|/m_E \leq c \rho m_E$  and so
$\phi(h_*) \leq c m_E^k \phi(\rho)$.
 Also, $h_* \ge |E|/m_E^2$.
Hence, since $\delta = 1$ on $[x_{n-1}, x_1]$,
\begin{eqnarray*}
|L(x)|& \leq& C \frac{\phi(h_*)}{h_*}  \left(\frac{m_E}{m_J} \sum_{j\in\A} |x-x_j| +c  \sum_{j\in\widetilde \B} |x-x_j| \right)
  \le   C m_E^{k+3} \frac{\phi(\rho)}{|E|}   \sum_{j\in\E} |x-x_j| \\
  &\le & C m_E^{k+3} \frac{\phi(\rho)}{|E|}   \sum_{j\in\E} |E|
 \le  C m_E^{k+4} \delta^\alpha \phi(\rho) .
\end{eqnarray*}

It remains to note that
\[
1 = |E| \sum_{j\in\E} \frac{h_j}{|E|^2} \leq c |E| \sum_{j\in\E} \frac{h_j}{(|x-x_j|+\rho)^2} \leq c m_E^2 \sum_{j\in\E} \frac{\rho h_j}{(|x-x_j|+\rho)^2},
\]
and the proof is complete.
\end{proof}

\sect{Convex  polynomial approximation of piecewise polynomials}\label{sec5}

\begin{lemma}[\cite{DLS}*{Lemma 4.3}] \label{newlemmasec101}
Let $k\ge 3$, $\phi\in\Phi^k$ and  $S\in\Sigma_{k,n}^{(1)}$ be such that
$b_k(S,\phi) \leq 1$.
 If   $1\leq \mu, \nu \leq n$ are such that the interval $I_{\mu,\nu}$ contains at least $2k-5$  intervals $I_i$ and points $x_i^*\in (x_i, x_{i-1})$ so that
\[ 
\rho_n^2(x_i^*) \phi^{-1} (\rho_n(x_i^*)) |S''(x_i^*)| \leq 1,
\]
then, for every $1\leq j\leq n$, we have
\[ 
\norm{ \rho_n^2 \phi^{-1}(\rho_n) S''}{L_\infty(I_j)} \leq c(k) \left[ (j-\mu)^{4k} + (j-\nu)^{4k} \right] .
\]
\end{lemma}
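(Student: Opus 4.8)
The plan is to reduce everything to the single polynomial piece $p_j:=S|_{I_j}$, which has degree $\le k-1$, so that on $I_j$ we have $S''=p_j''$, a polynomial of degree $\le k-3$, and then to recover $p_j''$ by interpolation from the controlled points. First, since $\rho_n(x)\sim h_j$ and $\phi(\rho_n(x))\sim\phi(h_j)$ for $x\in I_j$ (by \ineq{rho} together with the fact that $\phi\in\Phi^k$ is increasing and $\phi(t)/t^k$ is nonincreasing), the quantity to be bounded is comparable to $h_j^2\phi^{-1}(h_j)\norm{p_j''}{I_j}$.

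The second step is a comparison estimate between pieces. The hypothesis $b_k(S,\phi)\le 1$ means $\norm{p_i-p_j}{I_i}\le\phi(h_j)(h_{i,j}/h_j)^k$ for all $i,j$; applying Markov's inequality twice on $I_i$ (of length $h_i$) to the polynomial $p_i-p_j$ of degree $\le k-1$ gives
\be
\norm{p_i''-p_j''}{I_i}\le c(k)\,h_i^{-2}\phi(h_j)\left(\frac{h_{i,j}}{h_j}\right)^k.
\ee
Combining this with the assumption $|p_i''(x_i^*)|=|S''(x_i^*)|\le\phi(\rho_n(x_i^*))/\rho_n^2(x_i^*)\le c(k)\,h_i^{-2}\phi(h_i)$ at each controlled point $x_i^*\in(x_i,x_{i-1})$ yields the pointwise bound
\be
|p_j''(x_i^*)|\le c(k)\,h_i^{-2}\phi(h_i)+c(k)\,h_i^{-2}\phi(h_j)\left(\frac{h_{i,j}}{h_j}\right)^k.
\ee

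The third step recovers $p_j''$ from these values. Since $I_{\mu,\nu}$ carries at least $2k-5=2(k-2)-1$ controlled intervals, I would select $k-2$ of them lying in pairwise nonadjacent intervals, so that the nodes $x_i^*$ are separated by at least a full mesh interval; as $p_j''$ has degree $\le k-3$, the Lagrange formula at these $k-2$ nodes represents $p_j''$ on $I_j$, whence
\be
\norm{p_j''}{I_j}\le\sum_{m}|p_j''(x_{i_m}^*)|\,\norm{L_m}{I_j},\qquad L_m(x)=\prod_{l\ne m}\frac{x-x_{i_l}^*}{x_{i_m}^*-x_{i_l}^*}.
\ee
Each Lagrange factor is controlled by bounding the numerators $|x-x_{i_l}^*|$ by $h_{i_l,j}$ and the denominators by the node separation (a mesh length). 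Multiplying by $h_j^2\phi^{-1}(h_j)$ and inserting the previous display, every ratio that appears — $\phi(h_i)/\phi(h_j)$ (bounded through $\phi\in\Phi^k$), $h_j/h_i$, $h_{i,j}/h_j$, and the Lagrange quotients — is estimated by a fixed power of the index gap $|i-j|\le c(|j-\mu|+|j-\nu|)$. Collecting the $k$-th power coming from $b_k$ together with the $k-3$ Lagrange factors and the mesh ratios produces the stated exponent $4k$.

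The main obstacle is precisely this last bookkeeping: converting the multiplicative mesh ratios of the extrapolation into a clean polynomial bound in $|j-\mu|$ and $|j-\nu|$. Because the Chebyshev mesh is strongly nonuniform ($h_i$ ranges from $\sim n^{-2}$ near the endpoints to $\sim n^{-1}$ near the center), the passage from a ratio such as $h_{i,j}/h_j$ to the index gap $|i-j|$ must be handled through \ineq{rho}--\ineq{hj} and the remaining Chebyshev-partition estimates, and guaranteeing a uniform lower bound for the Lagrange denominators (which is exactly why one needs the extra controlled intervals, $2k-5$ rather than $k-2$) is what keeps the argument from degenerating near the endpoints.
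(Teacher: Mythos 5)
A preliminary remark: this paper never proves Lemma \ref{newlemmasec101} --- it is imported verbatim from \cite{DLS}*{Lemma 4.3} --- so your proposal can only be measured against that external argument, whose strategy is in fact the one you reconstruct. Your structural steps are sound. On $I_j$ one has $\rho_n\sim h_j$ by \ineq{rho}, and since $\phi\in\Phi^k$ this gives $\rho_n^2\phi^{-1}(\rho_n)\sim h_j^2\phi^{-1}(h_j)$, reducing the claim to a bound on $h_j^2\phi^{-1}(h_j)\norm{p_j''}{I_j}$. The inequality $\norm{p_i''-p_j''}{I_i}\le c(k)h_i^{-2}\phi(h_j)(h_{i,j}/h_j)^k$ is a legitimate combination of $b_k(S,\phi)\le1$ with Markov's inequality on $I_i$, and the hypothesis at a controlled point gives $|p_i''(x_i^*)|\le c\,\phi(h_i)h_i^{-2}$, again by \ineq{rho}. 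Sorting the at least $2k-5=2(k-2)-1$ controlled indices and taking every other one does produce $k-2$ pairwise nonadjacent intervals, which is exactly what guarantees that each Lagrange denominator contains a full mesh interval; and Lagrange interpolation at $k-2$ nodes reproduces $p_j''$, which has degree $\le k-3$. You also correctly identify why the hypothesis asks for $2k-5$ intervals rather than $k-2$.

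The genuine gap is the step you call bookkeeping and then assert rather than perform: with the bounds exactly as you state them, the exponent $4k$ does not come out. Put $d_i:=|i-j|+1$ and $D:=|j-\mu|+|j-\nu|+1$. The Chebyshev partition gives $h_i/h_j+h_j/h_i\le c\,d_i$ and $h_{i,j}\le c\,d_i^2h_j$, and your denominator bound is $|x_{i_m}^*-x_{i_l}^*|\ge c\min\{h_{i_m},h_{i_l}\}$. Feeding these in, the scaled node value is at most $c\,(d_m^{k+2}+d_m^{2k+2})$, each of the $k-3$ Lagrange factors is at most $c\,d_l^2\max\{d_m,d_l\}$, and bounding every $d$ by $D$ yields only $c\,D^{(2k+2)+3(k-3)}=c\,D^{5k-7}$, which is weaker than $D^{4k}$ as soon as $k\ge8$. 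The reason the true exponent is smaller is an anti-correlation your crude bounds discard: $h_{i,j}/h_j\sim|i-j|^2$ occurs only when $I_j$ is near $\pm1$ and $I_i$ is central, and in precisely that configuration $h_j/h_i$ is small and the Lagrange denominators, which near the endpoints behave like $|i_m-i_l|(i_m+i_l)n^{-2}$, are large. Capturing this (say via $h_s\sim\min\{s,n+1-s\}\,n^{-2}$ and a case analysis on the relative position of $j$, $\mu$, $\nu$) is the actual quantitative content of the proof in \cite{DLS}, and it is missing from your sketch. If you keep the crude bounds, you prove the lemma with $4k$ replaced by a larger power $N(k)$; that weaker version would in fact suffice for every application in this paper, where the lemma is only invoked with $|j-\mu|$ and $|j-\nu|$ bounded independently of $n$ (as in deriving \ineq{8.71} and \ineq{4.71}), but it is not the statement as quoted.
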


\begin{theorem}\label{step1111} Let  $k,r\in\N$, $r\ge 2$, $k\geq r+1$,    and let $\phi\in\Phi^k$ be of the form $\phi(t):=t^r\psi(t)$, $\psi\in\Phi^{k-r}$. Also, let  $d_+\ge0$, $d_-\ge0$ and $\alpha\ge0$  be given. Then there is a number $\NN=\NN(k,r,\phi,d_+,d_-,\alpha)$  satisfying the following assertion. If  $n\ge \NN$ and $S\in\Sigma_{k,n}^{(1)}  \cap\con$ is such that
\be\label{d11}
b_k(S,\phi) \le 1,
\ee
and, additionally,
\begin{eqnarray}
\label{d+11}
\text{if $d_+>0$, then }  & &  d_+|I_2|^{r-2}\le\min_{x\in I_2}S''(x), \\
\label{d+is01}
\text{if $d_+=0$, then }  & &  S^{(i)}(1) =0, \; \text{for all }\;  2\leq i \leq k-2,    \\
\label{d-1}
\text{if $d_->0$, then }  & & d_-|I_{n-1}|^{r-2}\le\min_{x\in I_{n-1}}S''(x), \\
\label{d-is01}
\text{if $d_-=0$, then }  & &  S^{(i)}(-1) =0, \; \text{for all }\;  2\leq i \leq k-2,
\end{eqnarray}
 then
 there exists  a polynomial
$P\in\con\cap\Poly_{Cn}$, $C=C(k,\alpha)$,  satisfying, for all $x\in[-1,1]$,
\begin{eqnarray}
\label{approx101}
 & |S(x)-P(x)|\le C(k,\alpha)\,\delta_n^{\alpha}(x)\phi(\rho_n(x)),   &  \text{if $d_+>0$ and $d_->0$,}   \\
 \label{newapprox102}
 & |S(x)-P(x)|\le C(k,\alpha)\,  \delta_n^{\min\{\alpha, 2k-2\}} (x)   \phi(\rho_n(x)),   &  \text{if $\min\{d_+, d_-\} = 0$.}
\end{eqnarray}
\end{theorem}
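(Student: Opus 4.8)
The plan is to decompose the convex piecewise polynomial $S$ into a part carrying the large curvature, concentrated near the endpoints, and a part that is nearly affine, to approximate each by a convex polynomial, and to reconcile the two by the localized correctors of \lem{QM}. The organizing quantity is $\sigma(x):=\rho_n^2(x)\phi^{-1}(\rho_n(x))S''(x)\ge0$; I would fix a large threshold $M=M(k)$ and call $\{\sigma\ge M\}$ the steep set and $\{\sigma<M\}$ the flat set.

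On the flat set the convexity of $S$ is carried by the nonnegative jumps of $S'$ together with a genuinely small second derivative, which are exactly the hypotheses of \lem{step11}, so a piecewise-linear interpolant corrected by the convex truncated-power surrogates $\pp_j$ of \lem{newlem5} yields a convex polynomial within $C\delta_n^\alpha\phi(\rho_n)$ of the flat part, and crucially this costs no factor of $m_E$ (its bound rests on $\sum\psi_j^{\beta-k}\le C$). On the steep set I would instead use simultaneous approximation: \lem{uncon} with a large $\gamma$ and $n_1$ a large multiple of $n$ produces a polynomial $D$ of degree $\le Cn$ with $|S-D|\le C\delta_n^\gamma\phi(\rho_n)$ and, by \ineq{7.24'}, $|S''-D''|\le C\delta_n^\gamma\rho_n^{-2}\phi(\rho_n)$; since $S''\ge M\rho_n^{-2}\phi(\rho_n)$ there, $D''>0$ and $D$ is automatically convex.

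To glue the two prescriptions into one convex polynomial I would add, across each steep/flat interface, a corrector $Q_n(\cdot,E,J)$ from \lem{QM} with $J$ straddling the transition and $E$ reaching into the steep set: \ineq{Qu} supplies a dominating positive curvature on $J$, \ineq{Qu1} bounds its negative part on $E\setminus J$ by $\delta_n^\alpha\rho_n^{-2}\phi(\rho_n)$, which is absorbed by the surplus curvature there, and \ineq{Qu2} keeps $|Q_n|\le C\delta_n^\alpha\phi(\rho_n)$ because the block length $m_E$ is bounded. That the interfaces are $O(1)$ wide, so that bounded blocks suffice, is the content of \lem{newlemmasec101}: once $\sigma\le1$ on a block of at least $2k-5$ intervals, $\sigma$ grows at most like $(j-\mu)^{4k}$. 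The endpoints enter through \ineq{d+11}--\ineq{d-is01}: if $d_+,d_->0$, then $S''\ge d_+|I_2|^{r-2}$ and $S''\ge d_-|I_{n-1}|^{r-2}$ force $\sigma\gtrsim1$ out to $\pm1$ (for $n\ge\NN$), the scheme runs all the way to the endpoints, and \ineq{approx101} follows with exponent $\alpha$; if instead $d_+=0$ (symmetrically $d_-=0$), the flatness $S^{(i)}(1)=0$, $2\le i\le k-2$, leaves no curvature near $x=1$ to absorb a corrector, so one must approximate the nearly affine $S$ directly, and the resulting loss caps the exponent at $\min\{\alpha,2k-2\}$ in \ineq{newapprox102}, the value $2k-2$ being forced by the $(\cdot)^{4k}$ growth of \lem{newlemmasec101}.

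I expect the main obstacle to be the global convexity bookkeeping: ensuring $P''\ge0$ at every point, and not merely on each region separately, while the total corrector size stays within $C\delta_n^\alpha\phi(\rho_n)$. Both demands rest on keeping every block bounded (the factor $m_E^{k_1}$ in \ineq{Qu2} forbids long blocks) and the number of correctors independent of $n$, which is precisely what the localization of $\sigma$ in \lem{newlemmasec101} delivers. Choosing $\NN$ uniformly and handling the flat-endpoint case sharply enough to produce exactly $2k-2$ are the remaining delicate points; the degree of $P$ stays $\le Cn$ throughout.
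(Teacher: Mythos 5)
Your outline assembles the right ingredients --- the small/large curvature split, \lem{step11} for the small part, \lem{uncon} plus the correctors of \lem{QM} for the large part, and \lem{newlemmasec101} to locate where $S''$ is controlled --- and this is indeed the architecture of the paper's proof. But the step you defer as ``the main obstacle'', the global convexity bookkeeping, \emph{is} the proof, and the decomposition you propose would fail at exactly that step. Each corrector $Q_n(\cdot,E,J)$ of \lem{QM} is allowed to be negative on $E\setminus J$ (inequality \ineq{Qu1}), so its negative part must land where the large part retains a surplus $S''\gtrsim \phi(\rho_n)/\rho_n^2$; moreover \lem{QM} requires $m_J<m_E/4$, so $J$ must occupy a small fraction of its block. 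With a pointwise threshold on your $\sigma=\rho_n^2\phi^{-1}(\rho_n)S''$, the steep set can be a union of $\sim n$ short runs separated by flat intervals, and any block $E$ straddling such an interface unavoidably places the negative part of its corrector on flat territory, where nothing absorbs it. Your claim that \lem{newlemmasec101} makes the interfaces ``$O(1)$ wide'' so that ``the number of correctors is independent of $n$'' is both inaccurate (that lemma gives polynomial growth of $\rho_n^2\phi^{-1}(\rho_n)|S''|$ away from intervals where it is $\le 1$; it says nothing about widths or counts) and unnecessary: in the paper the number of corrected blocks does grow with $n$, and the total corrector size is controlled by localization, i.e.\ by summing \ineq{Qu2} against $\sum_j h_j(|x-x_j|+\rho_n)^{-2}\le C$. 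What actually makes convexity verifiable is the two-stage \emph{block} decomposition: blocks $E_q$ of $C_3$ intervals are ``good'' if they contain at least $2k-5$ intervals satisfying \ineq{8.51}; $F$ is the union of long runs of non-good blocks; inside $F$ each block contains at most $2k-4$ low-curvature intervals --- a small fraction of $C_3$ by the choice \ineq{8.4} --- and these are exactly covered by the sets $J_q$ on which the correctors are positive, while on the rest of $F$ one has the surplus $S''>5C_2\phi(\rho_n)/\rho_n^2$. Convexity of $R_n=D_{n_1}+C_2\overline Q_n+C_2M_n$ is then checked in the three regimes $x\in F\setminus J^*$, $x\in J^*$ and $x\in[-1,1]\setminus F^e$, with a different choice of the interval $A$ in \lem{uncon} in each regime.

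The endpoint discussion is also wrong in substance. When $d_+=0$, $S$ near $1$ is \emph{not} nearly affine: by \ineq{d+is01} its second derivative on $I_1$ equals $A_1(1-x)^{k-3}$, where $A_1$ can be as large as $\sim n^{2k-2}\phi(n^{-2})$, i.e.\ $S''$ can be as large on $I_1$ as the bound $\phi(\rho_n)/\rho_n^2$ permits. Since \lem{step11} demands $S''\equiv 0$ on the outermost intervals (condition \ineq{approx31}), the paper performs surgery: it replaces $S_3$ on $[x_{1,2n},1]$ by a linear function, checks via a tangent-line argument that the modified piecewise polynomial is still convex with admissible derivative jumps as in \ineq{jumps1}, and pays the price $A_3 n^{2k-2}\phi(n^{-2})(1-x)^{k-1}\le C\delta_n^{2k-2}(x)\phi(n^{-2})$ for doing so. This surgery cost --- not the $(\cdot)^{4k}$ growth in \lem{newlemmasec101}, as you assert --- is the source of the exponent $2k-2$ in \ineq{newapprox102}.
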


The proof of \thm{step1111} is quite long and technical and is similar (with some rather significant changes) to that of \cite{KLS-mon}*{Theorem 10.2}. It is given in the last section of this paper.

\sect{Convex approximation by smooth piecewise polynomials}

The following theorem was proved in \cite{KLSconspline}.

\begin{theorem}[\cite{KLSconspline}*{Theorem 2.1}] \label{thm-convex-spline}
Given $r\in\N$, there is a constant $c=c(r)$ such that if $f\in C^r[-1,1]$ is convex, then there is a number $\NN=\NN(f,r)$, depending on $f$ and $r$, such that for $n\ge\NN$, there are convex piecewise polynomials $S$ of degree $r+1$ with knots at the Chebyshev partition $T_n$ (\ie $S\in \Sigma_{r+2,n} \cap\con$), satisfying
\be\label{intersplinepub}
|f(x)-S(x)|\le c(r)\left( \varphi(x)/n\right)^r \w_2\left(f^{(r)}, \varphi(x)/n\right),\quad x\in[-1,1],
\ee
and, moreover, for $x\in [-1,-1+n^{-2}] \cup  [1-n^{-2}, 1]$,
\be\label{interendtwopub}
|f(x)-S(x)|\le c(r)\varphi^{2r}(x) \w_2\left(f^{(r)},  \varphi(x)/n\right)
\ee
and
\be\label{interendonepub}
|f(x)-S(x)|\le c(r)\varphi^{2r}(x) \w_1 \left(f^{(r)},  \varphi^2(x) \right) .
\ee
\end{theorem}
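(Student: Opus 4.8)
The plan is to reduce the statement to a \emph{shape-preserving} approximation of a derivative of $f$, and then to recover $S$ by integration, so that both convexity and endpoint interpolation are built in. Assume first $r\ge 2$ (the case $r=1$ is handled at the end). Since $f\in C^r\cap\con$, the function $g:=f''$ is nonnegative and lies in $C^{r-2}$. I would approximate $g$ on each Chebyshev interval $I_j$ by a polynomial $q_j$ of degree $\le r-1$; by Whitney's inequality and \ineq{rho},
\[
\|g-q_j\|_{I_j}\le c\,\w_r(g,h_j;I_j)\le c\,h_j^{r-2}\,\w_2(f^{(r)},h_j;I_j)\le c\,\rho_n^{r-2}(x)\,\w_2\!\left(f^{(r)},\rho_n(x)\right),\quad x\in I_j .
\]
The standard positivity trick then yields a \emph{nonnegative} piecewise polynomial $\sigma$ of degree $\le r-1$ on $T_n$ satisfying the same pointwise bound: wherever the local piece $q_j$ would dip below $0$, the value of $g$ there is already of the order of the error term, so the upward correction needed to restore $\sigma\ge 0$ does not spoil the estimate.

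Having $\sigma\ge 0$, I would set
\[
S(x):=f(-1)+f'(-1)(x+1)+\int_{-1}^x (x-t)\,\sigma(t)\,dt ,
\]
so that $S''=\sigma\ge 0$, whence $S\in\Sigma^{(1)}_{r+2,n}\cap\con$, and $S$ interpolates both $f$ and $f'$ at $-1$. The error admits the representation $f(x)-S(x)=\int_{-1}^x (x-t)\,(g-\sigma)(t)\,dt$. For the interior estimate \ineq{intersplinepub} I would insert the pointwise bound on $g-\sigma$, written as $\phi(\rho_n(t))/\rho_n^2(t)$ with $\phi(t):=t^r\w_2(f^{(r)},t)\in\Phi^{r+2}$, and estimate the weighted integral by splitting $[-1,x]$ along the partition. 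The factor $(x-t)$ supplies the two missing powers, turning $\rho_n^{r-2}$ into $\rho_n^{r}\sim(\varphi(x)/n)^r$; convergence of the tail uses the doubling and monotonicity properties \ineq{rho}--\ineq{rho11} together with $r+2\ge 4$, and the contribution of the tiny intervals near $-1$ is negligible precisely because $r\ge 2$. For the near-endpoint estimates at $-1$, observe that for $x\in[-1,-1+n^{-2}]$ the integral runs over a set of length $\lesssim n^{-2}$ on which every $h_j\sim n^{-2}\sim\varphi^2(x)$; the same computation then collapses to \ineq{interendtwopub}, and \ineq{interendonepub} follows after replacing $\w_2$ by $c\,\w_1$ on this short scale.

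The genuine obstacle is the endpoint $+1$: double integration from $-1$ does \emph{not} force interpolation at $+1$, so the symmetric estimates near $x=1$ cannot come from the one-sided representation. To repair this I would require $\sigma$ to match, in addition, the two moments of $g$,
\[
\int_{-1}^1 \sigma(t)\,dt=\int_{-1}^1 g(t)\,dt=f'(1)-f'(-1) \andd \int_{-1}^1 t\,\sigma(t)\,dt=\int_{-1}^1 t\,g(t)\,dt ,
\]
which is exactly equivalent to $S(1)=f(1)$ and $S'(1)=f'(1)$, i.e.\ to interpolation at $+1$ as well. The hard part is enforcing these two linear conditions \emph{while keeping $\sigma\ge 0$ and the local estimate intact}. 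I would do so by reserving, already in the construction of $\sigma$, two nonnegative adjustment pieces supported on fixed interior intervals whose zeroth and first moments can be tuned independently within a range comparable to the admissible error there, and then solving the resulting $2\times 2$ system; the positivity margin of $\sigma$ on those intervals (itself of the order of the error bound) guarantees that the tuned $\sigma$ stays nonnegative. Once interpolation holds at both endpoints, the identity $\int_{-1}^1(x-t)(g-\sigma)\,dt\equiv 0$ gives $f(x)-S(x)=\int_x^1 (t-x)(g-\sigma)(t)\,dt$, and the estimates near $+1$ follow exactly as near $-1$. Finally, the case $r=1$ is treated identically after replacing ``$g=f''$, integrate twice'' by ``$f'$ nondecreasing, integrate once'': one approximates $f'$ by a nondecreasing piecewise linear $\sigma$ with error $\w_2(f',\cdot)$, sets $S(x):=f(-1)+\int_{-1}^x\sigma(t)\,dt\in\Sigma_{3,n}\cap\con$, and matches the single moment $\int_{-1}^1\sigma=f(1)-f(-1)$ to secure interpolation at $+1$.
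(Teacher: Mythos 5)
Your approach --- approximate $g:=f''$ by a nonnegative piecewise polynomial $\sigma$ and integrate twice --- breaks down at the very step you treat as routine: passing from the pointwise bound on $g-\sigma$ to the bound \ineq{intersplinepub} on $f-S$. In $|f(x)-S(x)|\le\int_{-1}^x(x-t)|g-\sigma|(t)\,dt$ the kernel $(x-t)$ \emph{grows} with the distance from $x$, so the integral is dominated by contributions far from $x$; it does not ``supply the two missing powers''. Concretely, at $x=0$, inserting $|g-\sigma|(t)\le c\rho_n^{r-2}(t)\,\w_2(f^{(r)},\rho_n(t))$ gives a right-hand side of size at least $c\,n^{-(r-2)}\w_2(f^{(r)},1/n)$ (the bulk of $[-1,0]$ has $\rho_n\sim 1/n$ and $x-t\sim 1$), which exceeds the target $n^{-r}\w_2(f^{(r)},1/n)$ by a factor of order $n^2$. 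This loss is not an artifact of taking absolute values: the standard positivity correction ($\sigma|_{I_j}:=q_j+\|g-q_j\|_{I_j}$ on offending intervals) makes $\sigma\ge g$ there, so the error integrand is one-signed and genuinely accumulates. To control $\int_{-1}^x(x-t)(g-\sigma)\,dt$ one needs the zeroth and first moments of $g-\sigma$ to (nearly) vanish on essentially every $I_j$; your construction imposes no such conditions, and they conflict with $\sigma\ge0$, since where $f''$ is near zero you can only add mass, never remove it. The same bookkeeping defeats the $2\times 2$ repair at $+1$: without built-in cancellation the moment defects are of order $n^{-(r-2)}\w_2(f^{(r)},1/n)$, whereas nonnegative bumps on \emph{fixed} interior intervals that respect the local error bound can shift moments only by $O\bigl(n^{-1}\cdot n^{-(r-2)}\w_2(f^{(r)},1/n)\bigr)$ --- short by a factor of $n$. (The $r=1$ reduction has the same flaw, one derivative down.)

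The endpoint estimates fail as well, and for a reason that points to what the theorem is really about. On $[-1,-1+n^{-2}]$ it is \emph{not} true that $\varphi^2(x)\sim n^{-2}$: $\varphi^2(x)=1-x^2$ vanishes at $x=-1$. Whitney on $I_n$ gives only $|f(x)-S(x)|\le c(1+x)^2 n^{-2(r-2)}\w_2(f^{(r)},n^{-2})$, i.e.\ vanishing of order $2$ with a modulus frozen at scale $n^{-2}$, while \ineq{interendtwopub} and \ineq{interendonepub} demand vanishing of order $r$ times a modulus that itself vanishes as $x\to-1$; for $r\ge3$ your bound is weaker by an unbounded power of $(1+x)$, and even for $r=2$ the modulus scale is wrong. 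Obtaining that decay requires building $S$ near $\pm1$ from Taylor expansions of $f$ at $\pm1$, and keeping such pieces convex is exactly what cannot be done uniformly in $f$. This is also the decisive structural objection: no step of your argument uses $n\ge\NN(f,r)$, so you would be proving the theorem with $\NN=1$ --- but the paper states right after \thm{thm-convex-spline} (and \cite{KLSconspline} proves) that $\NN$ cannot be taken independent of $f$; cf.\ \thm{gonskaconvex}. Hence no repair of your scheme that keeps all constants depending on $r$ alone can possibly succeed. Note finally that the present paper does not prove this theorem at all: it quotes \cite{KLSconspline}, whose construction is based on local approximation of $f$ itself, with $S$ equal on the end intervals to the Taylor polynomial of $f$ of degree $r$ at $\pm1$ plus a term $a_\pm(n;f)(x\mp1)^{r+1}$ (this structure is used in Section 9 here), and convexity of those endpoint pieces is precisely what forces $n\ge\NN(f,r)$.
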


As was shown in  \cite{KLSconspline},  $\NN$ in the statement of \thm{thm-convex-spline}, in general, cannot be independent of  $f$.

We will now show that the following ``smooth analog'' of this result also holds.

\begin{theorem}  \label{thm-smooth}
Given $r\in\N$, there is a constant $c=c(r)$ such that if $f\in C^r[-1,1]$ is convex, then there is a number $\NN=\NN(f,r)$, depending on $f$ and $r$, such that for $n\ge\NN$, there are continuously differentiable convex piecewise polynomials $S$ of degree $r+1$ with knots at the Chebyshev partition $T_n$ (\ie $S\in \Sigma_{r+2,n}^{(1)}\cap\con $), satisfying
\ineq{intersplinepub}, \ineq{interendtwopub} and \ineq{interendonepub}.
\end{theorem}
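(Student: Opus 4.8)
The plan is to start from the merely continuous convex spline produced by \thm{thm-convex-spline} and to ``repair'' its first derivative, turning it into a continuously differentiable convex spline of the same degree while retaining all three estimates. So fix $n\ge\NN(f,r)$ (with $\NN$ as in \thm{thm-convex-spline}, possibly enlarged) and let $S_0\in\Sigma_{r+2,n}\cap\con$ be the spline of that theorem, so that $S_0$ already satisfies \ineq{intersplinepub}, \ineq{interendtwopub} and \ineq{interendonepub}. The only obstruction to $S_0\in\Sigma^{(1)}_{r+2,n}$ is that its second derivative, viewed as a nonnegative measure, has the form $S_0''=g\,dx+\sum_{j=1}^{n-1}\alpha_j\delta_{x_j}$, where $g:=S_0''$ on each $I_j$ is a nonnegative polynomial of degree $\le r-1$ (convexity of $S_0$ on $I_j$) and $\alpha_j:=S_0'(x_j+)-S_0'(x_j-)\ge0$ are the derivative jumps at the interior knots. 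I would obtain $\tilde S$ by replacing each atom $\alpha_j\delta_{x_j}$ with a nonnegative absolutely continuous density $\alpha_j\kappa_j$, setting $\tilde S'':=g+\sum_j\alpha_j\kappa_j$ and integrating twice, fixing the two constants by imposing zero Cauchy data for $\tilde S-S_0$ at $-1$. Since $g\ge0$, $\alpha_j\ge0$ and $\kappa_j\ge0$, the resulting $\tilde S$ is automatically convex; since $\tilde S''$ is now a bounded function, $\tilde S$ is automatically in $C^1$; and taking each $\kappa_j$ to be a piecewise polynomial of degree $\le r-1$ with knots only at the Chebyshev points keeps $\tilde S\in\Sigma^{(1)}_{r+2,n}$. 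Thus convexity and smoothness come for free, and the whole problem reduces to controlling $\tilde S-S_0$.

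First I would bound the jumps. With $\phi(t):=t^r\w_2(f^{(r)},t)$, which is equivalent to a function in $\Phi^{r+2}$, estimate \ineq{intersplinepub} gives $|f-S_0|\le c\,\phi(\rho_n)$ (because $\varphi/n\le\rho_n$ and $\phi$ is nondecreasing), while $\w_{r+2}(f,t)\le c\,t^{r}\w_2(f^{(r)},t)=c\,\phi(t)$; hence \lem{b_k<c} yields $b_{r+2}(S_0,\phi)\le c$. Applying this to adjacent pieces, where $h_{j,j+1}\sim h_j$ by \ineq{hj}, and then Markov's inequality on $I_j$, exactly as in the proof of \lem{step11}, gives $0\le\alpha_j\le c\,\phi(h_j)/h_j$.

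Next I would choose each $\kappa_j\ge0$ supported on $I_j\cup I_{j+1}=[x_{j+1},x_{j-1}]$ with $\int\kappa_j=1$ and first moment $\int t\,\kappa_j(t)\,dt=x_j$; this balancing is feasible because $x_j$ is interior to the support and $h_j\sim h_{j+1}$ by \ineq{hj}. Matching both the mass and the first moment of the atom forces the error $E_j$, defined by $E_j''=\alpha_j(\kappa_j-\delta_{x_j})$ with zero Cauchy data to the left of its support, to vanish together with $E_j'$ outside $[x_{j+1},x_{j-1}]$; consequently $\tilde S-S_0=\sum_jE_j$, and since $E_j(1)=E_j'(1)=0$ for every $j$ one gets $\tilde S(\pm1)=f(\pm1)$ automatically. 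On its support $|E_j|\le c\,\alpha_j h_j\le c\,\phi(h_j)$, and since for each $x\in I_\ell$ only $j\in\{\ell-1,\ell\}$ contribute, \ineq{rho} and \ineq{hj} give $|\tilde S-S_0|(x)\le c\,\phi(h_\ell)\sim c\,\phi(\rho_n(x))$, which together with the bound for $S_0$ reproduces \ineq{intersplinepub}.

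The main obstacle is the two endpoint estimates \ineq{interendtwopub} and \ineq{interendonepub} on $[1-n^{-2},1]\cup[-1,-1+n^{-2}]$, which force the error to decay faster than $\varphi^{2r}$. Since $\tilde S$ is a single polynomial on $I_1=[x_1,1]$, one cannot leave $I_1$ untouched, and the support of $\kappa_1$ necessarily meets $I_1$; the key is to make $\kappa_1$ harmless near $1$. I would take $\kappa_1\big|_{I_1}(x)=a\,(1-x)^{r-1}$ with $a\sim h_1^{-r}$ (putting a fixed fraction of the mass on the $I_1$-side so the first moment can still be balanced to $x_1$ through the $I_2$-part), so that $E_1''$ vanishes to order $r-1$ at $1$ and hence, using $E_1(1)=E_1'(1)=0$, $|E_1(x)|\le c\,\alpha_1 a\,(1-x)^{r+1}\le c\,h_1^{-1}(1-x)^{r+1}\w_2(f^{(r)},h_1)$ on $I_1$. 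It then remains to fit this under the required right-hand sides: for \ineq{interendtwopub} I would use that $\w_2(f^{(r)},\cdot)/(\cdot)^2$ is almost decreasing together with $\varphi^2(x)=(1-x)(1+x)$ and $h_1n^2\sim1$; for \ineq{interendonepub} I would use that $\w_1(f^{(r)},\cdot)/(\cdot)$ is almost decreasing and $\w_1\ge\tfrac12\w_2$, which converts the spurious factor $\w_2(f^{(r)},h_1)$ into $\w_1(f^{(r)},\varphi^2(x))$ precisely at the cost of the extra power of $(1-x)$ gained from the high-order vanishing of $\kappa_1$. The symmetric choice of $\kappa_{n-1}$ at $-1$ handles the left endpoint, and combining the resulting bounds for $E_1,E_{n-1}$ with the estimates for $S_0$ yields \ineq{interendtwopub} and \ineq{interendonepub} for $\tilde S$. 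This endpoint bookkeeping is the delicate part; the interior estimate, convexity, and the $C^1$ property are essentially automatic once the smoothing is arranged as a mass- and moment-preserving redistribution of the nonnegative measure $S_0''$.
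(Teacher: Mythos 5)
Your proposal is correct, but it takes a genuinely different route from the paper's proof. The paper never modifies $S_0$ near $\pm 1$: it invokes the constrained spline smoothing result \lem{lemcms} (quoted from \cite{klp}) only on the subinterval $[x_{2n-1,2n},\,x_{1,2n}]$, whose matching conditions \ineq{last1} on values and first derivatives allow the smoothed spline to be glued $C^1$-continuously to $S_0$ outside that subinterval. Since the glued $S$ coincides with $S_0$ on $[-1,-1+n^{-2}]\cup[1-n^{-2},1]$, both endpoint estimates \ineq{interendtwopub} and \ineq{interendonepub} are inherited verbatim, and only the interior estimate \ineq{intersplinepub} needs a perturbation bound, via $\omega_{r+2}(S_0,h_j;\J_j)\le c\norm{f-S_0}{\J_j}+c\,\omega_{r+2}(f,h_j;\J_j)\le c\,h_j^r\omega_2(f^{(r)},h_j)$ --- at the cost that $S$ ends up with knots at $T_{2n}$ rather than $T_n$. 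You instead do the smoothing by hand, as a mass- and first-moment-preserving redistribution of each atom $\alpha_j\delta_{x_j}$ of the measure $S_0''$ into a nonnegative polynomial density of degree $\le r-1$ on $I_j\cup I_{j+1}$, so that convexity, the $C^1$ property, locality of the error and the degree bound come for free, and you must then face the endpoint difficulty head-on. Your resolution of it is sound: with $\kappa_1|_{I_1}=a(1-x)^{r-1}$ one gets exactly $E_1(x)=\tfrac{\alpha_1 a}{r(r+1)}(1-x)^{r+1}$ on $I_1$, and combining $\alpha_1\le c\,\phi(h_1)/h_1$ (from \lem{b_k<c} plus Markov on adjacent pieces), $a\le c\,h_1^{-r}$ and $h_1n^2\sim 1$ with the almost-monotonicity of $t^{-2}\omega_2(f^{(r)},t)$, respectively of $t^{-1}\omega_1(f^{(r)},t)$ together with $\omega_2\le 2\omega_1$, does yield \ineq{interendtwopub} and \ineq{interendonepub} on $[1-n^{-2},1]$, and also \ineq{intersplinepub} on all of $I_1$ (where your blanket bound $c\,\phi(\rho_n)$ alone would not suffice); I checked that these computations close. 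So your argument buys self-containedness (no appeal to \cite{klp}) and knots at $T_n$ itself, while the paper's buys brevity and makes the endpoint estimates trivial by construction.

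One detail you gloss over: the fraction $m_1$ of mass placed on the $I_1$-side is not a freely chosen ``fixed fraction''. A nonnegative polynomial density of degree $\le r-1$ on $I_2$ has its centroid confined to $\left[x_2+\tfrac{h_2}{r+1},\,x_1-\tfrac{h_2}{r+1}\right]$ (extremal densities proportional to $(t-x_2)^{r-1}$ and $(x_1-t)^{r-1}$), so the moment balance $m_1\tfrac{h_1}{r+1}=(1-m_1)(x_1-c_2)$ forces $\tfrac{h_2}{h_1+h_2}\le m_1\le\tfrac{r h_2}{h_1+r h_2}$, a single forced value when $r=1$. This window is nonempty, and by \ineq{hj} it gives $m_1\sim 1$ and hence $a\sim h_1^{-r}$ as you claim, so the omission is expository rather than substantive; but it should be spelled out, since it is exactly the point where the degree restriction on $\kappa_1$ interacts with the moment conditions.
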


Let $\Spl{r}{\z_m}$ denote the space of all piecewise  polynomial functions (ppf) of degree $r-1$ (order $r$) with the knots  $\z_m := (z_i)_{i=0}^m$, $a =: z_0 <z_1 < \dots <z_{n-1} < z_m := b$.
Also, the scale of the partition $\z_m$ is denoted by
\be\label{ratio}
 \scale(\z_m) := \max_{0\le j\le m-1}\frac{|J_{j\pm1}|}{|J_j|} \, ,
\ee
where $J_j := [z_{j},z_{j+1}]$. 

In order to prove \thm{thm-smooth} we need the following lemma which is an immediate corollary of a more general result in \cite{klp}.

\begin{lemma}[see \cite{klp}*{Lemma 3.8}]    \label{lemcms}
Let   $r\in\N$, $\z_m := (z_i)_{i=0}^m$, $a =: z_0 <z_1 < \dots <z_{m-1} < z_m := b$ be a partition of $[a,b]$, and let $s\in\con \cap \Spl{r+2}{\z_m}$.
Then, there exists $\tilde s\in\con \cap \Spl{r+2}{\z_m}\cap C^1[a,b]$ such that, for any $1\leq j\leq m-1$,
\be \label{con}
\norm{s-\tilde s}{[z_{j-1}, z_{j+1}]}\le c(r,\scale(\z_m)) \omega_{r+2}(s,  z_{j+2}- z_{j-2} ;  [z_{j-2}, z_{j+2}]) \, ,
\ee
where $z_j := z_0$, $j<0$ and $z_j:= z_m$, $j>m$.
Moreover,
\be\label{last}
\tilde s^{(\nu)}(a) =s^{(\nu)}(a) \quad \mbox{\rm and}\quad \tilde s^{(\nu)}(b) =s^{(\nu)}(b)\,, \quad \nu=0,1\, .
\ee
\end{lemma}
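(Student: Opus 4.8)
The plan is to obtain this lemma as a specialization of the general smoothing result \cite{klp}*{Lemma 3.8}, so I will first describe the reduction and then sketch the underlying construction to make clear why the convex case yields exactly the stated conclusion. The general lemma replaces a shape-constrained spline by one of the \emph{same order and on the same knots} that lies in a prescribed smoothness class, with a local error controlled by a modulus of smoothness whose order equals the spline order. Here the shape constraint is convexity ($s\in\con$), the order is $r+2$, and the target smoothness class is $C^1$; thus the main step is to verify that a convex $s\in\Spl{r+2}{\z_m}$ meets the hypotheses, after which the bound \ineq{con} (with the order-$(r+2)$ modulus over $[z_{j-2},z_{j+2}]$) and the endpoint matching \ineq{last} are read off directly.

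To see why such a $\tilde s$ exists, note first that convexity forces $s$ to be continuous with $s'$ nondecreasing, so the only obstruction to $s\in C^1$ is the collection of nonnegative corner jumps $a_j := s'(z_j+)-s'(z_j-)\ge 0$ at the interior knots $1\le j\le m-1$. Writing the distributional second derivative as $ds'=g\,dt+\sum_j a_j\,\delta_{z_j}$, with $g:=s''\ge 0$ a piecewise polynomial of degree $\le r-1$, the idea is to construct $\tilde s$ with $\tilde s''=\tilde g$, where $\tilde g\in\Spl{r}{\z_m}$ is nonnegative and has \emph{no atoms}: for each $j$ one replaces the point mass $a_j\delta_{z_j}$ by a nonnegative degree-$(r-1)$ density supported on the union $J_{j-1}\cup J_j$ of the two adjacent intervals and having the same total mass $a_j$ and the same first moment $a_j z_j$. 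Setting $\tilde s(x):=s(a)+s'(a)(x-a)+\int_a^x (x-t)\,\tilde g(t)\,dt$ then gives $\tilde s\in\con\cap\Spl{r+2}{\z_m}\cap C^1[a,b]$: convexity is preserved because we only add nonnegative density to $g$, and $\tilde s'$ is continuous because $\tilde g$ has no atoms. Matching mass and first moment at every knot makes these two quantities agree globally for $ds'$ and $\tilde g\,dt$, which forces $\tilde s^{(\nu)}(b)=s^{(\nu)}(b)$ for $\nu=0,1$; the conditions at $a$ hold by the choice of the initial data, yielding \ineq{last}.

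For the error \ineq{con}, the decisive point is that each correction $c_{j}\,dt-a_{j}\delta_{z_{j}}$ has zero mass and zero first moment and is supported in $[z_{j-1},z_{j+1}]$, so the resulting bump in $s-\tilde s$ is supported exactly on $[z_{j-1},z_{j+1}]$ and vanishes at its endpoints. Consequently, on $[z_{j-1},z_{j+1}]$ only the bumps at $z_{j-1},z_{j},z_{j+1}$ contribute, and since each smooths a first-derivative jump of size $a_{j'}\ge 0$ over intervals comparable to the local mesh, one gets $\norm{s-\tilde s}{[z_{j-1},z_{j+1}]}\le c(r,\scale(\z_m))\,(z_{j+2}-z_{j-2})\sum_{j'=j-1}^{j+1} a_{j'}$, with all relevant supports inside $[z_{j-2},z_{j+2}]$. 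It then remains to recognize this jump content as a modulus of order $r+2$: by Whitney's inequality the right-hand side of \ineq{con} is equivalent to the best approximation of $s$ by polynomials of degree $\le r+1$ on $[z_{j-2},z_{j+2}]$, and since the corner terms $a_{j'}(x-z_{j'})_+$ cannot be cancelled by such polynomials—here the nonnegativity $a_{j'}\ge0$ from convexity, together with the bounded number of nearby knots, controls the condition number in terms of $\scale(\z_m)$—this best approximation dominates $c(r,\scale(\z_m))\,(z_{j+2}-z_{j-2})\sum_{j'} a_{j'}$. Combining the two estimates gives \ineq{con}.

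The main obstacle is to carry out the redistribution so that all three requirements hold at once: nonnegativity of $\tilde g$ (preservation of convexity), the exact mass-and-moment matching needed for the $C^1$ endpoint conditions, and the sharp order-$(r+2)$ local error rather than a cruder bound—all with constants that degrade only through the scale $\scale(\z_m)$ when adjacent intervals differ widely in length. For $r\ge 2$ a single degree-$(r-1)$ density on one adjacent interval already has enough free parameters to match both mass and moment while staying nonnegative, whereas the borderline case $r=1$ (piecewise-constant $\tilde g$) forces one to split the mass across both neighbors of $z_j$ to recover the moment degree of freedom. All of this bookkeeping is exactly what \cite{klp}*{Lemma 3.8} packages in the required generality, which is why invoking it is the efficient route.
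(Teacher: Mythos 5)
Your proposal matches the paper at the only point where it matters: the paper offers no proof of this lemma at all, stating it as an immediate corollary of \cite{klp}*{Lemma 3.8}, which is precisely the route you invoke first. Your supplementary sketch of the underlying construction (redistributing each atom $a_j\delta_{z_j}$ of $ds'$ into a nonnegative density on the adjacent intervals with matched mass and first moment) is sound and consistent with the cited result; the one small inaccuracy is in your heuristic for the lower bound in \ineq{con}, where the jump content $a_{j'}$ is extracted from the best polynomial approximation on $[z_{j-2},z_{j+2}]$ by a Markov-type inequality applied to the polynomial pieces adjacent to each knot (which needs no positivity of the $a_{j'}$), rather than by the nonnegativity-prevents-cancellation argument you gesture at.
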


\begin{proof}[Proof of \thm{thm-smooth}]
Let $n$ be a sufficiently large fixed number, and let  $S_0\in \Sigma_{r+2,n} \cap\con$ be a piecewise polynomial from the statement of \thm{thm-convex-spline} for which estimates \ineq{intersplinepub}--\ineq{interendonepub} hold.
Let $a := x_{2n-1, 2n}$,  $b:= x_{1, 2n}$ and let $\z_{n} := (z_i)_{i=0}^{n}$ be such that $z_0:=a$, $z_n:=b$  and $z_i := x_{n-i}$, $1\le i \le n-1$ (note that $\z_{n} \subset T_{2n}$).
Clearly, $S_0\in \Spl{r+2}{\z_{n}}$, $\scale(\z_{n}) \sim 1$, and \lem{lemcms} implies that
 there exists $\tilde S_0\in\con \cap \Spl{r+2}{\z_{n}}\cap C^1[a,b]$ such that, for any $1\leq j\leq n$,
\be \label{con1}
\norm{S_0-\tilde S_0}{\tilde I_j}\le c(r) \omega_{r+2}(S_0,  h_{j} ; \J_j) \, ,
\ee
where $\tilde I_j := I_j \cap [a,b]$ and  $\J_j := [x_{j+2}, x_{j-2}]  \cap [a,b]$, and
\be\label{last1}
\tilde S_0^{(\nu)}(a) =S_0^{(\nu)}(a) \quad \mbox{\rm and}\quad \tilde S_0^{(\nu)}(b) =S_0^{(\nu)}(b)\,, \quad \nu=0,1\, .
\ee
We now define
\[
S (x) :=
\begin{cases}
S_0(x) , & \text{if $x\in [-1,1]\setminus [a,b]$}, \\
\tilde S_0(x) ,  & \text{if $x \in  [a,b]$}.
\end{cases}
\]
Clearly,  $S\in \Sigma_{r+2,2n}^{(1)}\cap\con $, estimates \ineq{interendtwopub} and \ineq{interendonepub} hold (with $n$ replaced by $2n$), and \ineq{intersplinepub} also holds (clearly, it does not matter if we use $n$ or $2n$ there) since
 $\varphi(x)/n \sim h_j$, for any $x \in \J_j$, $1\le j \le n$. Thus, for $x\in \tilde I_j$, $1\le j\le n$,
 \begin{align*}
 |f(x)-S(x)| & \le |f(x)-S_0(x)| + \norm{S_0-\tilde S_0}{\tilde I_j} \le c \norm{f-S_0}{\J_j}    + c \omega_{r+2}(f,  h_{j} ; \J_j) \\
 & \le c h_j^r \w_2(f^{(r)}, h_j)  \le c \left( \varphi(x)/n\right)^r \w_2\left(f^{(r)}, \varphi(x)/n\right) .
 \end{align*}
\end{proof}

\begin{remark}
It follows from \cite{klp}*{Corollary 3.7} that \thm{thm-smooth} is, in fact, valid for splines of minimal defect, \ie for $S\in\Sigma_{r+2,n}\cap C^{r}\cap \con$.
\end{remark}

\sect{ Proof of \thm{thm1}}\label{sec555}

\subsection{The case for $r\ge 2$}

Let $S$ be the piecewise polynomial from the statement of \thm{thm-smooth}. Without loss of generality, we can assume that $S$ does not have knots at $x_1$ and $x_{n-1}$ (it is sufficient to treat $S$ as a piecewise polynomial with knots at the Chebyshev partition $T_{2n}$).
 Then,
\[
l_1(x) := S(x)\big|_{I_1\cup I_2}=f(1)+\frac{f'(1)}{1!}(x-1)+\dots+\frac{f^{(r)}(1)}{r!}(x-1)^r+a_+(n;f)(x-1)^{r+1}
\]
and
\[
l_n(x) := S(x)\big|_{I_n\cup I_{n-1}}=f(-1)+\frac{f'(-1)}{1!}(x+1)+\dots+\frac{f^{(r)}(1)}{r!}(x+1)^r+a_-(n;f)(x+1)^{r+1},
\]
where  $a_+(n,f)$ and $a_-(n,f)$  are some constants that depend  only on $n$ and $f$.

We will now show that
\be \label{constants}
n^{-2} \max\{ |a_+(n,f)|, |a_-(n,f)| \} \to 0 \quad \text{as} \quad n \to\infty .
\ee
Indeed, it follows from \ineq{interendonepub} that, for all $x\in I_1\cup I_2$,
\begin{align*}
|a_+(n,f)(1-x)| & \le \frac{|l_1(x) - f(x)|}{(1-x)^r}  + \frac{1}{(1-x)^r} \left| f(x) - f(1)-\frac{f'(1)}{1!}(x-1)-\dots-\frac{f^{(r)}(1)}{r!}(x-1)^r \right| \\
& \le c \w_1(f^{(r)}, 1-x) + \frac{1}{(r-1)! (1-x)^r}  \left| \int_x^1  \left( f^{(r)}(t) - f^{(r)}(1) \right) (t-x)^{r-1} dt \right| \\
& \le c \w_1(f^{(r)}, 1-x) ,
\end{align*}
and, in particular,
$n^{-2} |a_+(n,f)| \le c \w_1(f^{(r)}, n^{-2}) \to 0$  as $ n \to\infty$. Analogously, one draws a similar conclusion for $|a_-(n,f)|$.

For $f\in  C^r$, $r\ge 2$, let $i_+\geq 2$, be the smallest integer $2\leq i \leq r$, if it exists, such that $f^{(i)}(1)\neq 0$, and denote
\[
D_+(r,f) :=
\begin{cases}
(2   r!)^{-1} |f^{(i_+)}(1)|, &    \mbox{\rm if $i_+$ exists,} \\
0, & \mbox{\rm otherwise.}
\end{cases}
\]
Similarly, let $i_-\geq 2$, be the smallest integer $2\leq i \leq r$, if it exists, such that $f^{(i)}(-1)\neq 0$, and denote
\[
D_-(r,f) :=
\begin{cases}
(2 r!)^{-1} |f^{(i_-)}(1)|, &    \mbox{\rm if $i_-$ exists,} \\
0, & \mbox{\rm otherwise.}
\end{cases}
\]

Hence, if $n$ is sufficiently large, then
\be\label{der1}
S''(x)\ge D_+(r,f)   (1-x)^{r-2},\quad x\in (x_2,1],
\ee
and
\be\label{der2}
S''(x)\ge D_-(r,f)  (x+1)^{r-2},\quad x\in[-1,x_{n-2}).
\ee

\medskip


\begin{proof}[\bf Proof of  \thm{thm1} in the case $r\ge 2$] Given $r\in\N$, $r\ge 2$,  and a convex $f\in C^{r}$, let $\psi\in\Phi^2$
be such that $\omega_2(f^{(r)},t) \sim \psi(t)$,   denote $\phi(t):=t^r\psi(t)$, and note that $\phi\in\Phi^{r+2}$.

For a sufficiently large $\NN \in \N$ and  each $n\ge\NN$, we take the piecewise polynomial $S\in\Sigma_{r+2,n}$ of   \thm{thm-smooth} satisfying \ineq{der1} and \ineq{der2},
 and   observe that
\[
\omega_{r+2}(f,t)\le t^r\omega_2(f^{(r)},t) \sim \phi(t),
\]
so that by   \lem{b_k<c} with $k=r+2$, we conclude that
\[
b_{r+2}(S,\phi)\le c.
\]
Now,   it follows from \ineq{der1} and \ineq{hj}  that
\[
\min_{x\in I_2} S''(x)\geq D_+(r,f) |I_1|^{r-2} \geq 3^{-r+2} D_+(r,f) |I_2|^{r-2}
\]
and, similarly, \ineq{der2} yields
\[
\min_{x\in I_{n-1}} S''(x)  \geq 3^{-r+2} D_-(r,f) |I_{n-1}|^{r-2} .
\]
Hence, using \thm{step1111} with $k=r+2$, $d_+ := 3^{-r+2} D_+(r,f)$,  $d_- := 3^{-r+2} D_-(r,f)$ and $\alpha= 2k-2= 2r+2$, we conclude that there exists a polynomial $P\in \Poly_{cn} \cap \con$ such that
\be\label{estim}
|S(x)-P(x)|\le c\delta_n^{2r+2} (x) \rho^r_n(x)\psi(\rho_n(x)) ,\quad x\in[-1,1].
\ee

In particular, for $x\in I_1\cup I_n$, $x\neq-1,1$, using the fact that $\rho_n(x) \sim n^{-2}$ for these $x$, and $t^{-2}\psi(t)$ is nonincreasing  we have
\begin{align}\label{ends}
|S(x)-P(x)|&\le c(n\varphi(x))^{2r+2}\rho^r_n(x)\psi(\rho_n(x))\\ \nonumber
& \le c n^2 \varphi^{2r+2}(x)    \left( \frac{n\rho_n(x)}{\varphi(x)}\right)^2 \psi\left(\frac{\varphi(x)}{n} \right) \\ \nonumber
&\le c\varphi^{2r}(x)\omega_2\left(f^{(r)},\frac{\varphi(x)}n\right).
\end{align}
In turn, this implies for $x\in I_1\cup I_n$, that
\[
|S(x)-P(x)|\le c\left(\frac{\varphi(x)}n\right)^r\omega_2\left(f^{(r)},\frac{\varphi(x)}n\right),
\]
which combined with \ineq{estim} implies
\be\label{endss}
|S(x)-P(x)|\le c\left(\frac{\varphi(x)}n\right)^r\omega_2\left(f^{(r)},\frac{\varphi(x)}n\right),\quad x\in[-1,1].
\ee
Now, \ineq{endss} together with \ineq{intersplinepub} yield \ineq{interpol}, and \ineq{ends} together with \ineq{interendtwopub} yield \ineq{interpol1}.
In order to prove \ineq{interpolaux}, using the fact that $t^{-1} \w_1(f^{(r)}, t)$ is nonincreasing we have, for $x\in I_1\cup I_n$, $x\neq-1,1$,
\begin{align}\label{endsaux}
|S(x)-P(x)|&\le c(n\varphi(x))^{2r+2}\rho^r_n(x)\w_1(f^{(r)},\rho_n(x))\\ \nonumber
& \le c n^2 \varphi^{2r+2}(x)     \frac{\rho_n(x)}{\varphi^2(x)} \w_1(f^{(r)}, \varphi^2(x) )    \\ \nonumber
&\le c\varphi^{2r}(x)\omega_1\left(f^{(r)},\varphi^2(x)\right),
\end{align}
which together with \ineq{interendonepub} completes the proof of \thm{thm1}.
\end{proof}

\subsection{The case for $r=1$}

It is possible to show   that, in order to prove \thm{thm1} in the case $r=1$, it is sufficient to construct a convex polynomial $P_n$ that approximates the quadratic spline $S$ from
\thm{thm-convex-spline} (with $r=1$) so that
\[
|S(x)-P_n(x)| \le c \w_3(f, \rho_n(x)) ,
\]
and
\be \label{intder}
P_n(\pm 1)=S(\pm 1) \andd P_n'(\pm 1)=S'(\pm 1) .
\ee
In order to construct such a polynomial, one can use exactly the same method as in \cite{K} (using $S$ instead of $f$ and $2n$ instead of $n$) with the only difference that extra factors $(1-y^2)$ should appear inside integrals in the definitions of $Q_j$, $\overline Q_j$ and $T_j$ (see \cite{K}*{p. 158}).
All estimates are then still valid and either follow from the statements in Section~\ref{auxresults} or are proved similarly. This change is needed in order to guarantee that \ineq{intder} holds because addition of these factors implies that the first derivatives of these modified polynomials $Q_j$, $\overline Q_j$ and $T_j$ are $0$ at $\pm 1$ which, in turn, implies that the first derivatives of other auxiliary polynomials $\sigma_j$, $R_j$ and $\overline R_j$ at $\pm 1$ are ``correct''.
 We omit details.


\section{Appendix: proof of \thm{step1111}}

Throughout the proof, we fix $\beta :=k+7$ and   $\gamma:=\beta_1 -1 = 60(\alpha+\beta)+k$. Hence, the constants $C_1,\dots,C_6$ (defined below)  as well as the constants $C$, may depend only on $k$ and $\alpha$. We also note that $S$ does not have to be twice differentiable at the Chebyshev knots $x_j$. Hence, when we write $S''(x)$ (or $S_i''(x)$, $1\leq i \leq 4$) everywhere in this proof, we implicitly assume that $x\neq x_j$, $1\leq j\leq n-1$.

Let $C_1 :=C$, where the constant   $C$ is taken from \ineq{Qu} (without loss of generality we assume that $C_1\leq 1$), and let $C_2:=C$  with $C$ taken from \ineq{7.24'}.
 We also fix an integer $C_3$ such that
\be\label{8.4}
C_3\ge 8k/C_1 .
\ee
Without loss of generality, we may assume that  $n$ is divisible by $C_3$, and put $n_0:=  n/C_3$.

We divide $[-1,1]$ into $n_0$ intervals
\[
E_q:=[x_{qC_3},x_{(q-1)C_3}]=I_{qC_3}\cup\dots\cup I_{(q-1)C_3+1}, \quad 1\leq q\leq n_0 ,
\]
consisting of $C_3$ intervals $I_i$ each (\ie $m_{E_q} = C_3$, for all $1\leq q\leq n_0$).

We   write ``$j\in UC$'' (where ``$UC$'' stands for ``\underline{U}nder \underline{C}ontrol")  if there is
$x_j^*\in(x_j,x_{j-1})$  such that
\be\label{8.51}
S''(x_j^*)\le \frac{5C_2\phi(\rho_n (x_j^*))}{\rho_n^2(x_j^*)} .
\ee

We   say that $q\in G$ (for ``\underline{G}ood), if the interval $E_q$ contains at least $2k-5$
intervals $I_j$ with $j\in UC$.
%

Then,   \ineq{8.51} and \lem{newlemmasec101}
imply that,
\be\label{8.71}
S''(x)\le \frac{C\phi(\rho)}{\rho^2},\quad x\in E_q , \; q\in G.
\ee
Set
\[
E:=\cup_{q\notin G}E_q,
\]
and decompose $S$ into a ``small" part and a ``big" one, by setting
$$
s_1(x):=\begin{cases}  S''(x),&\quad\text{if}\quad x\notin E,\\
0,&\quad\text{otherwise},\end{cases}.
$$
and
\[
s_2 (x):=S''(x)-s_1(x) =
\begin{cases}
0,&\quad\text{if}\quad x\notin E,\\
 S''(x),&\quad\text{otherwise},
 \end{cases}
\]
and putting
\[
S_1(x) :=S(-1)+(x+1)S'(-1)+\int_{-1}^x(x-u)s_1(u)du \andd
S_2(x) :=\int_{-1}^x(x-u)s_2(u)du.
\]
(Note that $s_1$ and $s_2$ are well defined for $x\neq x_j$,
$1\le j\le n-1$, so that $S_1$ and $S_2$ are well defined
everywhere and possess second derivatives   for $x\neq x_j$,
$1\le j\le n-1$.)


Evidently,
$$
S_1,S_2\in\Sigma_{k,n}^{(1)},
$$
and
\[
S''_1(x)\ge0\,\text{ and }\,S''_2(x)\ge0, \quad x\in[-1,1].
\]
Now, \ineq{8.71} implies that
\[
S''_1(x)\le \frac{C\phi(\rho)}{\rho^2},\quad x\in[-1,1],
\]
which, in turn, yields by \lem{important},
\[
b_k(S_1, \phi)\le C.
\]
Together with \ineq{d11}, we obtain
\be\label{8.81}
b_k(S_2, \phi)\le b_k(S_1, \phi) + b_k(S, \phi) \le
C+1\le\lceil C+1\rceil=:C_4.
\ee

The set $E$ is a union of disjoint intervals $F_p=[a_p,b_p]$,
between any two of which, all intervals $E_q$ are with $q\in G$.
We may assume that $n>C_3C_4$, and write $p\in AG$ (for ``\underline{A}lmost
\underline{G}ood"), if $F_p$ consists of no more than $C_4$ intervals $E_q$,
 that is, it consists of no more than $C_3C_4$ intervals
$I_j$. Hence, by \lem{newlemmasec101},
\be\label{4.71}
S_2''(x)\le\frac {C\,\phi(\rho)}{\rho^2},\quad x\in F_p,\;  p\in AG.
\ee

One may   think of intervals $F_p$, $p\not\in AG$, as  ``long'' intervals where $S''$  is   ``large'' on many subintervals $I_i$  and  rarely dips down to $0$.
Intervals $F_p$, $p \in AG$, as well as all intervals $E_q$ which are not contained in any $F_p$'s (\ie all ``good'' intervals $E_q$)  are where $S''$ is ``small' in the sense that the inequality $ S''(x)\le  {C\phi(\rho)}/{\rho^2}$ is valid there.

Set
\[
F:=\cup_{p\notin AG}F_p,
\]
note that $E =  \cup_{p\in AG}F_p \cup F$,
and decompose $S$ again by setting
$$
s_4:=\begin{cases} S''(x),&\quad\text{if}\quad x\in F,\\
0,&\quad\text{otherwise},
\end{cases}
$$
and
\[
s_3(x) := S''(x)-s_4(x) =
\begin{cases} 0,&\quad\text{if}\quad x\in F,\\
S''(x),&\quad\text{otherwise},
\end{cases}
\]
and putting
\be\label{s341}
S_3(x) :=S(-1)+(x+1)S'(-1)+\int_{-1}^x(x-u)s_3(u)du \andd
S_4(x) :=\int_{-1}^x(x-u)s_4(u)du.
\ee

Then, evidently,
\be\label{8.91}
S_3,S_4\in\Sigma_{k,n}^{(1)}, \quad S_3+S_4 = S,
\ee
and
\be\label{8.101}
S''_3(x)\ge0\,\text{ and }\,S''_4(x)\ge0, \quad x\in[-1,1].
\ee

We remark that, if $x\not\in  \cup_{p\in AG} F_p$, then $s_1(x)=s_3(x)$ and $s_2(x)=s_4(x)$. If
$x \in  \cup_{p\in AG} F_p$, then $s_1(x)=s_4(x)=0$ and $s_2(x)=s_3(x)=S''(x)$.

For $x\in\cup_{p\in AG} F_p$, \ineq{4.71} implies that
$$
S_3''(x)=S_2''(x)\le\frac {C\,\phi(\rho)}{\rho^2}.
$$
For all other $x$'s,
$$
S_3''(x)=S_1''(x)\le\frac {C\,\phi(\rho)}{\rho^2}.
$$
We conclude that
\be\label{8.121}
S_3''(x)\le \frac {C_5\,\phi(\rho)}{\rho^2},\quad x\in[-1,1],
\ee
which by virtue of  \lem{important}, yields that $b_k(S_3,\phi)\le C$. As above, we obtain
\be\label{8.131}
b_k(S_4,\phi)  \leq b_k(S_3,\phi) + b_k(S,\phi) \le C+1\le\lceil C+1\rceil=:C_6.
\ee

We will approximate $S_3$ and $S_4$ by convex  polynomials that achieve the required degree of pointwise approximation.

\medskip

\noindent {\bf Approximation of $S_3$:}


If $d_+>0$, then there exists $\NN^*\in\N$, $\NN^* = \NN^* (d_+, \psi)$, such that, for $n> \NN^*$,
\[
\frac {\phi(\rho)}{\rho^2}=\rho^{r-2}\psi(\rho)<\frac{d_+|I_2|^{r-2}}{C_5}\le C^{-1}_5\,S''(x),\quad x\in I_2,
\]
where the first inequality follows since $\psi(\rho) \leq \psi(2/n) \to0$  as $n\to\infty$, and the second inequality follows by \ineq{d+11}.
Hence, by \ineq{8.121}, if $n>\NN^*$, then $s_3(x)\ne S''(x)$ for $x\in I_2$.  Therefore, since $s_3(x)=S''(x)$, for all $x\notin F$, we conclude that $I_2\subset F$, and so   $E_1\subset F$,
and $s_3(x)=0$, $x\in E_1$. In particular, $s_3(x)\equiv0$, $x\in I_1$.

Similarly, if $d_->0$, then  using \ineq{d-1} we conclude that there exists $\NN^{**}\in\N$, $\NN^{**} = \NN^{**}(d_-,\psi)$, such that, if  $n>\NN^{**}$, then $s_3(x)\equiv0$ for all $x\in I_n$.

Thus, we conclude that for $n\ge\max\{\NN^*,\NN^{**}\}$, we have
\be \label{endpoints1}
s_3(x)=0,\quad \mbox{\rm for all }\; x\in I_1\cup I_n.
\ee

Therefore, in view of \ineq{8.91} and \ineq{8.101}, it follows by   \lem{step11} combined with \ineq{8.121} that, in the case $d_+>0$ and $d_->0$, there exists
a convex polynomial $r_n \in \Poly_{Cn}$   such that
\be\label{8.141}
|S_3(x)-r_n(x)|\le C\,\delta^{\alpha} \phi(\rho),\quad x\in[-1,1].
\ee

Suppose now that $d_+=0$ and $d_->0$. First, proceeding as above, we conclude that $s_3\equiv 0$ on $I_n$.
Additionally, if $E_1 \subset F$, then, as above,  $s_3\equiv 0$ on  $I_1$ as well. Hence, \ineq{endpoints1} holds which, in turn, implies \ineq{8.141}.


If $E_1 \not\subset F$, then $s_3(x)=S''(x)$, $x\in I_1$, and so it follows from (7.3) that, for some constant $A_1\ge0$,
\[
s_3(x)=S''(x)=A_1(1-x)^{k-3},\quad x\in I_1.
\]
Note that $A_1$ may depend on $n$, but by (7.16) we conclude that,
\[
A_1\le C_5\frac{\phi(\rho_n(x_1))}{(1-x_1)^{k-3}\rho^2(x_1)}\sim n^{2k-2}\phi(n^{-2}).
\]
Hence, for $x\in I_1$,
\be\label{7.20}
S''_3(x)=s_3(x)=A_2n^{2k-2}\phi(n^{-2})(1-x)^{k-3},
\ee
where   $A_2$ is a nonnegative constant that may depend on $n$ but $A_2\le C$.

We  now construct $\wS_3 \in \Sigma_{k, 2n}$ which satisfies all conditions of \lem{step11} (with $2n$ instead of $n$).
Note  that $x_j := x_{j,n} = x_{2j,2n}$, denote $\xi := x_{1,2n}$ and define
\[
\widetilde S_3(x) :=
\begin{cases}
S_3(x), & \text{if $x<x_1$}, \\
S_3(1)+(x-1)S_3'(1)=:L(x), & \text{if $\xi  < x \le 1$}, \\
\ell(x), & \text{if $x\in [x_1,\xi]$},
\end{cases}
\]
where $\ell(x)$ is the linear polynomial   chosen so  that $\wS_3$ is continuous on $[-1,1]$, \ie   $\ell(x_1)=S_3(x_1)$ and $\ell(\xi)= S_3(1)+(\xi-1)S_3'(1) = L(\xi)$.
Clearly, $\wS_3 \in C[-1,1]$ (and, in fact, is in $C^1[-1,x_1)$), $\wS_3''(x) \leq C\rho^{-2}\phi(\rho)$, $x\not\in \{x_j\}_{j=1}^{n-1}\cup \{\xi\}$,  and  $\widetilde S_3''\equiv 0$ on $I_{1,2n}\cup I_n$. Note that $\wS_3'$ may be discontinuous at $x_1$ and $\xi$,  but, evidently, the slope of $L$ is no less than the slope of $\ell$, so that $\wS_3$ is convex in $[x_1,1]$.

Denote
\[
S_L:=S_3-L, \quad \wS_L:=\wS_3-L \andd \ell_L:=\ell-L,
\]
and note that,
\[
\wS_L(x)\equiv 0, \quad x\in[\xi,1], \andd \wS_L(x)=\ell_L(x), \quad x\in[x_1,\xi].
\]
Also, $\ell_L(x_1)=S_L(x_1)$ and $\ell_L(\xi)=\wS_L(\xi)=0$, and in view of \ineq{7.20},
\begin{align}\label{7.21}
0\le S_L(x)&=S_3(x)-S_3(1)-(x-1)S'_3(1)\\
&=\int_x^1(u-x)s_3(u)du=A_3n^{2k-2}\phi(n^{-2})(1-x)^{k-1},\quad x\in I_1,\nonumber
\end{align}
where $A_3\le C$. Now, the tangent line to $S_L$ at $x=x_1$ is
\[
y(x)=S_L(x_1)+(x-x_1)S'_L(x_1)=A_3n^{2k-2}\phi(n^{-2}) (1-x_1)^{k-2}   \left(1-x_1 -(k-1)(x-x_1)  \right),
\]
which intersects the $x$ axis at
$$
x_1+\frac{1-x_1}{k-1}\le\frac{1+x_1}2<x_{1,2n}=\xi.
$$
Hence, the slope of $\ell_L$ is no less than the slope of that tangent and, in turn, we conclude that the slope of $\ell$ is no less than $S'_3(x_1)$, so that $\wS_3$ is convex in $[-1,1]$.

Further, we have,
\begin{align}\label{7.23}
|S_3(x)-\wS_3(x)|&=|S_L(x)-\wS_L(x)|\le S_L(x)+\wS_L(x)\\
&\le S_L(x_1)+\wS_L(x_1)=2S_L(x_1)\le C \phi(n^{-2}),\quad x\in[x_1,\xi],\nonumber
\end{align}
and
\begin{align}\label{7.24}
|\wS_3(x)-S_3(x)|=|L(x)-S_3(x)|&=S_L(x)=A_3\phi(n^{-2})n^{2k-2}(1-x)^{k-1}\\
&\le C\delta^{2k-2}\phi(n^{-2}),\quad x\in[\xi,1].\nonumber
\end{align}
Note that  $\wS'_3$ may have (nonnegative) jumps at $x_1$ and $\xi$. However,
\be\label{7.25}
\wS_3'(\xi+) - \wS_3'(\xi-)+\wS_3'(x_1+) - \wS_3'(x_1-)=S'_3(1)-S_3'(x_1)=-S'_L(x_1)\le Cn^2\phi(n^{-2}),
\ee
so that \lem{step11} implies that there exists a convex polynomial
$r_n \in \Poly_{Cn}$ such that,
\[
|\widetilde S_3(x)-r_n(x)|\le C\,\delta^{\alpha} \phi(\rho),\quad x\in[-1,1].
\]
Observing that $\wS_3\equiv S_3$ on $[-1,x_1]$, and combining with \ineq{7.23} and \ineq{7.24} (recalling that $n^{-2}\le\rho$), we conclude that
\[
\left| \wS_3(x) - S_3(x) \right| \leq C\delta^{2k-2} \phi(\rho), \quad x\in [-1,1] ,
\]
so that
\be \label{tildeaux1}
| S_3(x)-r_n(x)|\le C\,\delta^{\min\{\alpha, 2k-2\}} \phi(\rho), \quad x\in[-1,1].
\ee

Finally, if $d_-=0$ and $d_+>0$, then the considerations are completely analogous and, if $d_-=0$ and $d_+=0$, then $\wS_3$ can be modified further on $I_n$ using \ineq{d-is01} and the above argument.

Hence, we've constructed a convex  polynomial $r_n \in \Poly_{Cn}$   such that, in the case when both $d_+$ and $d_-$ are strictly positive, \ineq{8.141} holds, and \ineq{tildeaux1} is valid if at least one of these numbers is $0$.

\medskip

\noindent
{\bf Approximation of $S_4$:}

Given a set $A\subset  [-1,1]$, denote
\[
A^e:=\cup_{I_j\cap A\ne\emptyset}I_j \andd  A^{2e}:=(A^e)^e,
\]
where $I_0=\emptyset$ and $I_{n+1}=\emptyset$.  For example, $[x_7,x_3]^e=[x_8,x_2]$, $I_1^e=I_1\cup I_2$, etc.

Also, given subinterval  $I\subset [-1,1]$ with its endpoints at the Chebyshev knots, we   refer to the right-most and the left-most intervals $I_i$ contained in $I$ as $EP_+(I)$ and $EP_-(I)$, respectively (for the ``\underline{E}nd \underline{P}oint'' intervals). More precisely, if $1\leq \mu<\nu \leq n$ and
\[
I = \bigcup_{i=\mu}^\nu I_i ,
\]
 then $EP_+(I) := I_\mu$, $EP_-(I) := I_\nu$ and $EP(I):= EP_+(I)  \cup EP_-(I) = I_\mu \cup I_\nu$. For example, $EP_+[-1,1] := I_1$, $EP_-[-1,1] := I_n$, $EP_+[x_7, x_3] = [x_4, x_3] = I_4$, $EP_-[x_7, x_3] = [x_7, x_6] = I_7$,
$EP [x_7, x_3] = I_4\cup I_7$,  etc.
Here, we simplified the notation by using $EP_\pm [a,b] := EP_\pm ([a,b])$ and $EP  [a,b] := EP ([a,b])$.

In order to approximate $S_4$, we observe that for $p\notin AG$,
\[
S_4''(x)=S_2''(x),\quad x\in F^{2e}_p,
\]
so that by virtue of \ineq{8.81}, we conclude that
\be\label{8.16}
b_k(S_4,\phi, F^{2e}_p)=b_k(S_2, \phi, F^{2e}_p)\le b_k(S_2, \phi)\le C_4.
\ee
(Note that, for $p\in AG$, $S_4$ is  linear  in $F_p^{2e}$ and so $b_k(S_4,\phi, F_p^{2e})=0$.)

We will approximate $S_4$ using the polynomial $D_{n_1}(\cdot,S_4) \in \Poly_{Cn_1}$  defined  in \lem{uncon}  (with $n_1:=C_6n$), and then we construct
  two ``correcting'' polynomials $\overline Q_n, M_n \in\Poly_{Cn}$ (using  \lem{QM}) in order to make sure that the resulting approximating polynomial is convex.

We begin with $\overline Q_n$. For each $q$ for which $E_q\subset F$, let
$J_q$ be the union of all intervals $I_j\subset E_q$ with $j\in
UC$ with the union of both intervals $I_j\subset E_q$  at the endpoints of $E_q$.
In other words,
\[
J_q := \bigcup_j \left\{ I_j \st j\in UC \andd  I_j\subset E_q\right\}  \;  \cup \;  EP(E_q) .
\]

Since $E_q\subset F$, then $q\notin G$ and so the number   of
 intervals $I_j\subset E_q$ with $j\in UC$  is at most $2k-6$.
Hence, by \ineq{8.4},
\[
m_{J_q}\le 2k-4<  2k   \le \frac{C_1C_3}4\le\frac{C_3}4,
\]
Recalling that the total number $m_{E_q}$ of intervals $I_j$ in $E_q$ is $C_3$ we conclude that \lem{QM} can be used with $E:= E_q$ and $J:=J_q$.
Thus, set
\[
\overline Q_n:=\sum_{q\, :\; E_q\subset F} Q_n(\cdot,E_q,J_q),
\]
where $Q_n$ are   polynomials from \lem{QM}, and denote
\[
J:=\bigcup_{q\, :\; E_q\subset F}J_q.
\]

Then, \ineq{Qu} through \ineq{Qu2} imply that that $\overline Q_n$ satisfies
\begin{align}\label{8.17}
\text{(a)} \qquad \overline Q_n''(x)&\ge 0,\quad x\in[-1,1]\setminus F,\nonumber\\
\text{(b)} \qquad \overline Q_n''(x)&\ge -   \frac{\phi(\rho)}{\rho^2}\quad x\in F\setminus
J,\\
\text{(c)} \qquad \overline Q_n''(x)&\ge  4  \frac{ \phi(\rho)}{\rho^2}\delta^{8\alpha},\quad x\in J.\nonumber
\end{align}
Note that  the inequalities in \ineq{8.17} are valid since, for any given $x$,
all relevant $Q_n''(x,E_q,J_q)$, except perhaps one, are nonnegative, and
\[
C_1 \frac{m_{E_q}}{m_{J_q}} \geq \frac{C_1C_3}{2k} \ge 4.
\]

Also, it follows from \eqref{Qu2} that, for any $x\in[-1,1]$,
\begin{eqnarray} \label{8.20}
|\overline Q_n(x)| & \leq & C\delta^\alpha \rho \phi(\rho)    \sum_{q\, :\; E_q\subset F} \sum_{j: \, I_j\subset  E_q}\frac{h_j}{(|x-x_j|+\rho)^2} \\ \nonumber
& \leq &
C\delta^\alpha \rho \phi(\rho)  \sum_{j=1}^n \frac{h_j}{(|x-x_j|+\rho)^2} \\ \nonumber
& \leq &
C\delta^\alpha \rho \phi(\rho) \int_0^\infty  \frac{du}{(u+\rho)^2}    \\ \nonumber
& = &
C\delta^\alpha \phi(\rho).
\end{eqnarray}

Next, we define the polynomial $M_n$. For each $F_p$ with $p\notin AG$, let $J_p^-$ denote the union of the two intervals on the left
side of $F^e_p$ (or just the interval $I_n$ if $-1\in F_p$), and let $J_p^+$ denote
the union of the two intervals on the right side of
$F^e_p$ (or just one interval $I_1$ if  $1\in F_p$), \ie
\[
J_p^- = EP_-(F^e_p) \cup EP_-(F_p) \andd J_p^+ = EP_+(F^e_p) \cup EP_+(F_p).
\]
Also, let $F_p^-$ and $F_p^+$ be the closed intervals each consisting of $m_{F_p^\pm}:=C_3C_4$
intervals $I_j$ and such that $J_p^-\subset F_p^-\subset F^e_p$ and $J_p^+\subset F_p^+\subset F^e_p$, and put
\[
J_p^*:=J_p^-\cup J_p^+ \andd J^*:=\cup_{p\notin AG}J_p^*.
\]
Now, we set
\[
M_n:=\sum_{p\notin AG}\left(Q_n(\cdot,F_p^+,J_p^+) +
Q_n(\cdot,F_p^-,J_p^-)\right).
\]

Since $m_{F_p^+}=m_{F_p^-}=C_3C_4$ and $m_{J_p^+}, m_{J_p^-} \leq 2$, it follows from \ineq{8.4} that
\[
C_1 \min\left\{ \frac{m_{F_p^+}}{m_{J_p^+}} ,\frac{m_{F_p^-}}{m_{J_p^-}} \right\} \ge  \frac{C_1 C_3 C_4}{2}
\ge 2C_4.
\]
Then    \lem{QM} implies
\be\label{8.24}
|M_n(x)|\le C\,\delta^{\alpha}\phi(\rho)
\ee
(this follows from \ineq{Qu2} using the same sequence of inequalities that was used to prove \ineq{8.20} above), and
\begin{align}\label{8.21}
\text{(a)} \quad M_n''(x)&\ge-2\frac{\phi(\rho)}{\rho^2},\quad x\in F\setminus J^*,\nonumber\\
\text{(b)} \quad M_n''(x)&\ge
2C_4         \, \delta^{8\alpha}   \frac{\phi(\rho)}{\rho^2},\quad x\in J^*,\\
\text{(c)} \quad M_n''(x)&\ge
2C_4\, \delta^{8\alpha}\frac{\phi(\rho)}{\rho^2}\left(\frac\rho{\dist\,(x,F)}
\right)^{\gamma+1},\; x\in[-1,1]\setminus F^e,\nonumber
\end{align}
where in the last inequality we used the fact that 
\[
\max\{\rho,\dist\,(x,F^e)\}\le  \dist\,(x,F),\quad
x\in[-1,1]\setminus F^e,
\]
which follows from \ineq{newauxest}.

The third auxiliary polynomial is
  $D_{n_1}:=D_{n_1}(\cdot, S_4)$ with $n_1 = C_6n$ from \lem{uncon}. By \ineq{8.131}, \ineq{7.23'} yields
\be\label{8.25}
|S_4(x)-D_{n_1}(x)|\le C\,\delta^\gamma \phi(\rho) \le C\,\delta^\alpha \phi(\rho)     ,\quad x\in[-1,1],
\ee
since $\gamma > \alpha$, and \ineq{7.24'}  implies that, for any
interval $A\subset [-1,1]$ having  Chebyshev knots as endpoints,
\begin{align}\label{8.26}
|S_4''(x)-D_{n_1}''(x)|&\le C_2\, \delta^{\gamma} \frac{\phi(\rho)}{\rho^2}b_k(S_4,\phi, A)\\&\quad+
C_2C_6 \, \delta^{\gamma}\frac{\phi(\rho)}{\rho^2}\frac n{n_1}
\left(\frac{\rho}{\dist(x,[-1,1]\setminus A)}\right)^{\gamma+1},\quad
x\in A.\nonumber
\end{align}

We now define
\be\label{8.27}
R_n:=D_{n_1}+C_2\overline Q_n+C_2M_n.
\ee
By virtue of \ineq{8.20}, \ineq{8.24}, and \ineq{8.25}  we obtain
$$
|S_4(x)-R_n(x)|\le C\,\delta^{\alpha}\phi(\rho),\quad x\in[-1,1],
$$
which combined with \ineq{8.141} and \ineq{tildeaux1}, proves \ineq{approx101} and \ineq{newapprox102} for $P :=R_n+r_n$.

Thus, in order to conclude the proof of  \thm{step1111}, we should prove that
$P$ is convex. We recall that $r_n$ is convex, so   it is sufficient to show that $R_n$ is convex as well.

Note that \ineq{8.27} implies
\[
R_n''(x) \ge C_2\overline Q_n''(x)+C_2M_n''(x)
 -|S_4''(x)- D_{n_1}''(x)|+S_4''(x), \quad x\in[-1,1],
\]
(this inequality is extensively used in the three cases below),
and that \ineq{8.26} holds for {\em any} interval $A$ with Chebyshev knots as the endpoints, and so we can use
different intervals $A$ for different points $x\in [-1,1]$.
We consider three cases depending on whether (i) $x\in F\setminus J^*$, or (ii)  $x\in J^*$, or (iii)~$x\in[-1,1]\setminus F^e$.

{\bf Case (i):} If $x\in F\setminus J^*$, then, for some $p\notin AG$,
  $x\in F_p\setminus J_p^*$, and so   we take $A:=F_p$. Then, the quotient
inside the  parentheses in \ineq{8.26} is bounded above by 1 (this follows from \ineq{newauxest}). Also, since   $s_4(x)=S''(x)$, $x\in F$, it follows that
$b_k(S_4,\phi, F_p)=b_k(S,\phi, F_p)\le1$. Hence,
\begin{align}\label{8.28}
|S_4''(x)-D_{n_1}''(x)|&\le  C_2\, \frac{\phi(\rho)}{\rho^2}b_k(S_4,\phi,F_p)+
C_2C_6 \, \frac{\phi(\rho)}{\rho^2}\frac n{n_1}\\
& \le
2 C_2 \, \frac{\phi(\rho)}{\rho^2},\quad x\in F\setminus J^*. \nonumber
\end{align}
Note  that $x\notin I_1\cup I_n$ (since   $F\setminus J^*$ does not contain any intervals in $EP(F_p)$, $p\notin AG$), and so $\delta = 1$.

It now follows by  \ineq{8.17}(c), \ineq{8.21}(a),  \ineq{8.28} and \ineq{8.101}, that
\[
R_n''(x)\ge C_2 \, \frac{\phi(\rho)}{\rho^2}(4-2-2)=0,\quad
x\in J\setminus J^*.
\]
If $x\in F\setminus(J\cup J^*)$, then \ineq{8.51} is violated and so
\[
S_4''(x) = S''(x) >\frac{5C_2\phi(\rho)}{\rho^2}.
\]
Hence, by virtue of \ineq{8.17}(b), \ineq{8.21}(a) and \ineq{8.28}, we get
\[
R_n''(x)\ge C_2\, \frac{\phi(\rho)}{\rho^2}(-1-2-2+5)=0,\quad
x\in F\setminus(J\cup J^*).
\]

{\bf Case (ii):}
If $x\in J^*$, then, $x\in J_p^*$, for some $p\notin AG$, and we take  $A:=F_p^{2e}$. Then,
 \ineq{8.16} and \ineq{8.26} imply (again, \ineq{newauxest} is used to estimate the quotient inside the  parentheses in \ineq{8.26}),
\begin{align}\label{8.29}
|S_4''(x)-D_{n_1}''(x)|&\le
C_2\,\delta^{\gamma} \frac{\phi(\rho)}{\rho^2}b_k(S_4,\phi, F_p^{2e})+
C_2C_6 \,\delta^{\gamma} \frac{\phi(\rho)}{\rho^2}\frac n{n_1}\\
&\le
2 C_2 C_4  \,\delta^{\gamma} \frac{\phi(\rho)}{\rho^2},\quad x\in J^*.    \nonumber
\end{align}
Now, we note that $EP(F_p) \subset J$, for all $p\notin AG$, and so $F\cap J^* \subset J$. Hence,
 using   \ineq{8.17}(a,c), \ineq{8.21}(b),
 \ineq{8.29} and \ineq{8.101}, we obtain
\[
R_n''(x)\ge 2 C_2 C_4\, \delta^{8\alpha} \frac{\phi(\rho)}{\rho^2}   - 2 C_2C_4\,\delta^{\gamma} \frac{\phi(\rho)}{\rho^2} \geq 0 ,
\]
 since $\gamma > 8\alpha$, and so $\delta^{\gamma} \leq \delta^{8\alpha}$.

{\bf Case (iii):}
If $x\in[-1,1]\setminus F^e$, then we take $A$ to be the
connected component of $[-1,1]\setminus F$ that contains $x$. Then by \ineq{8.26},
\begin{align}\label{8.30}
&|S_4''(x)-D_{n_1}''(x)|\nonumber\\
&\quad\le  C_2\, \delta^{\gamma} \frac{\phi(\rho)}{\rho^2}b_k(S_4,\phi,A)+
C_2C_6 \, \delta^{\gamma} \frac{\phi(\rho)}{\rho^2}\frac n{n_1}
\left(\frac\rho{\dist(x,[-1,1]\setminus A)}\right)^{\gamma+1}\\
&\quad = C_2 \, \delta^{\gamma} \frac{\phi(\rho)}{\rho^2}\left(\frac\rho{\dist(x,
F)}\right)^{\gamma+1},\quad x\in[-1,1]\setminus F^e , \nonumber
\end{align}
where we used the fact that $S_4$ is linear in $A$, and so $b_k(S_4,\phi,A)=0$.

Now, \ineq{8.17}(a), \ineq{8.21}(c), \ineq{8.30} and \ineq{8.101} imply,

\[
R_n''(x)\ge
\frac{\phi(\rho)}{\rho^2} \left(\frac\rho{\dist(x, F)}\right)^{\gamma+1} \left(
2 C_2 C_4 \delta^{8\alpha}   - C_2 \, \delta^{\gamma}
  \right)\geq 0,
\]
 since $C_4\geq 1$ and $\gamma > 8\alpha$.

Thus, $R_n''(x)\geq 0$ for all $x\in[-1,1]$, and so we have constructed a convex
polynomial $P$, satisfying \ineq{approx101} and \ineq{newapprox102}, for each $n\ge \NN$.
This completes the proof of \thm{step1111}.


\begin{bibsection}
\begin{biblist}

\bib{BP}{article}{
   author={Bondarenko, A. V.},
   author={Prymak, A. V.},
   title={\rm Negative results in shape-preserving higher-order approximations},
   language={Russian, with Russian summary},
   journal={\it Mat. Zametki},
   volume={76},
   date={2004},
   number={6},
   pages={812--823},
   issn={0025-567X},
   translation={
      journal={\it Math. Notes},
      volume={76},
      date={2004},
      number={5-6},
      pages={758--769},
      issn={0001-4346},
   },
}

\bib{CG}{article}{
   author={Cao, J. D.},
   author={Gonska, H. H.},
   title={\rm Pointwise estimates for higher order convexity preserving
   polynomial approximation},
   journal={\it J. Austral. Math. Soc. Ser. B},
   volume={36},
   date={1994},
   number={2},
   pages={213--233},
}

\bib{DL}{book}{
   author={DeVore, R. A.},
   author={Lorentz, G. G.},
   title={Constructive approximation},
   series={Grundlehren der Mathematischen Wissenschaften [Fundamental
   Principles of Mathematical Sciences]},
   volume={303},
   publisher={Springer-Verlag, Berlin},
   date={1993},
   pages={x+449},
}

\bib{DY}{article}{
author={DeVore, R. A.},
author={Yu, X. M.},
title={\rm Pointwise estimates for monotone polynomial approximation},
journal={\it Constr. Approx.},
volume={1},
date={1985},
pages={323--331},
}

\bib{DS}{book}{
author={Dzyadyk, V. K.},
author={Shevchuk, I. A.},
title={Theory of Uniform Approximation of Functions by Polynomials},
publisher={Walter de Gruyter},
place={Berlin},
date={2008},
pages={xv+480},
}

\bib{DLS}{article}{
author={Dzyubenko, G. A.},
author={Leviatan, D.},
author={Shevchuk, I. A.},
title={\rm Pointwise estimates of coconvex approximation},
journal={\it Jaen J. Approx.},
volume={6},
date={2014},
pages={261--295},
}

\bib{GLSW}{article}{
author={Gonska, H. H.},
author={Leviatan, D.},
author={Shevchuk, I. A.},
author={Wenz, H.-J.},
title={\rm Interpolatory pointwise estimates for polynomial
approximations},
journal={\it Constr. Approx.},
volume={16},
date={2000},
pages={603--629},
 }

\bib{K}{article}{
author={Kopotun, K. A.},
title={\rm Pointwise and uniform estimates for convex approximation of functions by algebraic polynomial},
journal={\it Constr. Approx.},
volume={10},
date={1994},
pages={153--178},
 }

\bib{K-coconvex}{article}{
   author={Kopotun, K. A.},
   title={\rm Coconvex polynomial approximation of twice differentiable
   functions},
   journal={\it J. Approx. Theory},
   volume={83},
   date={1995},
   number={2},
   pages={141--156},
}

\bib{K-sim}{article}{
   author={Kopotun, K. A.},
   title={\rm Simultaneous approximation by algebraic polynomials},
   journal={\it Constr. Approx.},
   volume={12},
   date={1996},
   number={1},
   pages={67--94},
}

\bib{klp}{article}{
   author={Kopotun, K. A.},
   author={Leviatan, D.},
   author={Prymak, A. V.},
   title={\rm Constrained spline smoothing},
   journal={\it SIAM J. Numer. Anal.},
   volume={46},
   date={2008},
   number={4},
   pages={1985--1997},
}

\bib{KLPS}{article}{
   author={Kopotun, K. A.},
   author={Leviatan, D.},
   author={Prymak, A.},
   author={Shevchuk, I. A.},
   title={\rm Uniform and pointwise shape preserving approximation by algebraic
   polynomials},
   journal={\it Surv. Approx. Theory},
   volume={6},
   date={2011},
   pages={24--74},
}

\bib{KLS-mon}{article}{
   author={Kopotun, K. A.},
   author={Leviatan, D.},
   author={Shevchuk, I. A.},
   title={\rm Interpolatory pointwise estimates for monotone polynomial
   approximation},
   journal={\it J. Math. Anal. Appl.},
   volume={459},
   date={2018},
   number={2},
   pages={1260--1295},
}

\bib{KLSconspline}{article}{
   author={Kopotun, K. A.},
   author={Leviatan, D.},
   author={Shevchuk, I. A.},
   title={\rm Interpolatory estimates for convex piecewise polynomial
   approximation},
   journal={\it J. Math. Anal. Appl.},
   volume={474},
   date={2019},
   number={1},
   pages={467--479},
}

\bib{L}{article}{
   author={Leviatan, D.},
   title={\rm Pointwise estimates for convex polynomial approximation},
   journal={\it Proc. Amer. Math. Soc.},
   volume={98},
   date={1986},
   number={3},
   pages={471--474},
}

\bib{LS98}{article}{
   author={Leviatan, D.},
   author={Shevchuk, I. A.},
   title={\rm Monotone approximation estimates involving the third modulus of smoothness},
   conference={
      title={\it Approximation theory IX, Vol. I.},
      address={Nashville, TN},
      date={1998},
   },
   book={
      series={Innov. Appl. Math.},
      publisher={Vanderbilt Univ. Press, Nashville, TN},
   },
   date={1998},
   pages={223--230},
}

\bib{LS2002}{article}{
author={Leviatan, D.},
author={Shevchuk, I. A.},
title={\rm Coconvex approximation},
journal={\it J. Approx. Theory},
volume={118},
date={2002},
pages={20--65},
}

\bib{petr}{article}{
   author={Petrova, T. O.},
   title={\rm A counterexample to convex interpolation approximation},
   language={Ukrainian, with Ukrainian summary},
   conference={
      title={\it Theory of the approximation of functions and related problems
      (Ukrainian)},
   },
   book={
      series={Pr. Inst. Mat. Nats. Akad. Nauk Ukr. Mat. Zastos.},
      volume={35},
      publisher={Nats\=\i onal. Akad. Nauk Ukra\"\i ni, \=Inst. Mat., Kiev},
   },
   date={2002},
   pages={107--112},
  }

 \bib{S}{book}{
   author={Shevchuk, I. A.},
   title={Polynomial approximation and traces of functions continuous on a segment},
   publisher={Naukova Dumka, Kiev},
    language={Russian},
   date={1992}
}

\bib{Yu}{article}{
   author={Yu, X. M.},
   title={\rm Pointwise estimates for convex polynomial approximation},
   journal={\it Approx. Theory Appl.},
   volume={1},
   date={1985},
   number={4},
   pages={65--74},
}

\bib{Yus}{article}{
   author={Yushchenko, L. P.},
   title={\rm A counterexample in convex approximation},
   language={Ukrainian, with English and Ukrainian summaries},
   journal={\it Ukra\"{\i}n. Mat. Zh.},
   volume={52},
   date={2000},
   number={12},
   pages={1715--1721},
   translation={
      journal={\it Ukrainian Math. J.},
      volume={52},
      date={2000},
      number={12},
      pages={1956--1963 (2001)},
      },
}

\end{biblist}
\end{bibsection}

\end{document}